\documentclass[12pt, letterpaper]{article}

\usepackage[utf8]{inputenc}

\usepackage[backend=biber,
style=numeric-comp, sorting=nyt, doi=false, isbn=false, date=year]{biblatex}

\AtEveryBibitem{\clearfield{url} \clearfield{note}}

\DeclareNameAlias{author}{family-given}
\DeclareNameAlias{editor}{family-given}

\AtBeginBibliography{%
}

\usepackage[pdftex,colorlinks=true,linkcolor=blue,citecolor=blue,urlcolor=red,unicode=true,hyperfootnotes=true,bookmarksnumbered]{hyperref}
\usepackage[margin=1truein]{geometry}

\usepackage{amsfonts}

\usepackage{wrapfig}

\usepackage{caption}
\usepackage{subcaption}

\usepackage{amsfonts, amssymb, amsmath, amsthm}
\usepackage{mathtools}
\usepackage{enumerate}
\usepackage{verbatim}

\usepackage{mathrsfs,amsfonts,mathtools}
\usepackage{amsmath}
\usepackage{amssymb}

\usepackage{amsthm}
\usepackage{thmtools}

\theoremstyle{plain}
\newtheorem{theorem}{Theorem}[section]

\newtheorem{conjecture}[theorem]{Conjecture}
\newtheorem{question}[theorem]{Question}
\newtheorem{proposition}[theorem]{Proposition}
\newtheorem{corollary}[theorem]{Corollary}

\theoremstyle{definition} 

\newtheorem{definition}[theorem]{Definition}
\newtheorem*{remark*}{Remark}
\newtheorem*{definition*}{Definition}

\renewcommand{\emptyset}{\varnothing}

\renewcommand{\epsilon}{\varepsilon}
\renewcommand{\phi}{\varphi}
\renewcommand{\kappa}{\varkappa}
\newcommand{\R}{\mathbb{R}}

\newcommand{\Z}{\mathbb{Z}}
\newcommand{\Nz}{\Z_{\ge0}}

\newcommand{\br}[1]{\left({#1}\right)}
\newcommand{\fbr}[1]{\left\{#1\right\}}
\newcommand{\lbr}[1]{\left|{#1}\right|}

\newcommand{\wt}[1]{\widetilde{#1}}

\newcommand{\mc}[1]{\mathcal{#1}}
\newcommand{\mb}[1]{\mathbf{#1}}
\newcommand{\mulg}[2]{{\left[#1\right]}^{#2}}
\newcommand{\pwrs}[1]{\mc{P}\!\left(#1\right)}
\newcommand{\lcG}{G_{\mathrm{left}}}
\newcommand{\matcon}[3]{{#2}\,\xi_{#1}\,{#3}}

\newcommand{\seq}{\subseteq}

\newcommand{\shadow}[1]{\partial_{#1}}

\newcommand{\wsat}{\operatorname{\mathrm{wsat}}}
\newcommand{\rk}{\operatorname{\mathrm{rk}}}
\newcommand{\Wsat}{\mathrm{wSAT}}
\newcommand{\rhosat}{\operatorname{\rho\text{-}sat}}

\title{Asymptotically optimal lower bounds on weak saturation numbers for hypergraphs}

\author{Nikolai Terekhov\thanks{Department of Discrete Mathematics, Moscow Institute of Physics and Technology, Dolgoprudny, Russia; nikolayterek@gmail.com
}}

\date{}

\begin{document}

\maketitle

\begin{abstract}
    Given an $r$-uniform hypergraph $H$ and a positive integer $n$, the weak saturation number $\wsat(n,H)$ is the minimum number of edges in an $r$-uniform hypergraph $F$ on $n$ vertices such that the missing edges in $F$ can be added, one at a time, so that each added edge creates a copy of $H$.
    For the case of graphs ($r = 2$), asymptotically optimal general lower bounds for these numbers in terms of the minimum vertex degree of $H$ are known. In this work, we generalize these bounds to the case of hypergraphs and establish their asymptotic optimality. To prove this, we introduce a lower bound method based on polymatroids. This method generalizes a linear algebraic method but, unlike the original version, makes it possible to derive lower bounds with non-integer asymptotic coefficients.
\end{abstract}

\section{Introduction}

\begin{definition}
    Let $r \ge 1$ and $n \ge 1$ be integers, and let $H$ be a non-empty\footnote{Here and throughout, a hypergraph $H$ is called non-empty if $|E(H)|\ge 1$.} $r$-uniform hypergraph. We say that an $r$-uniform hypergraph $F$ on $n$ vertices is \emph{weakly \mbox{$H$-saturated}} if it is possible to arrange the edges $E(K_n^r) \setminus E(F)$, where $K_n^r$ denotes the complete $r$-uniform hypergraph on $n$ vertices, in an order $e_1, \ldots, e_k$ such that the addition of each $e_i$ to the hypergraph $F \cup \fbr{e_1, \ldots, e_{i-1}}$ creates a new copy of $H$. In this case, we write $F \in \Wsat(n, H)$. The minimum number of edges in such a hypergraph $F$ is denoted $\wsat(n, H)$ and is called the \emph{weak saturation number} of $H$ in $K_n^r$.
\end{definition}

The notion of weak saturation was introduced by Bollob\'{a}s~\cite{1968Bollobas}. These numbers have been studied extensively~\cite{
    1982Frankl,
    1985Alon,
    1985Kalai,
    1991Erdos,
    1992Tuza,
    2001PikhurkoExterior,
    2012BaloghLinearAlgebra,
    2015Moshkovitz,
    2018Morrison,
    2020Hambardzumyan,
    2023Bulavka,
    2023Shapira,
    2025TerekhovLinearAlgebra},
and, in particular, exact values of $\wsat(n,H)$ have been determined when $H$ is a complete $r$-uniform hypergraph~\cite{1982Frankl,1984KalaiRigid,1985Alon}, a complete bipartite graph with equal part sizes~\cite{2021Kronenberg}, or a pyramid~\cite{2001PikhurkoExterior}. Beyond computing these quantities for particular hypergraphs, general bounds have also been sought. One of the earliest results in this direction for graphs ($r=2$) is the following upper bound in terms of the minimum vertex degree $\delta(H)$ of a graph $H$.
\begin{proposition}[\cite{2007Sidorowicz}]\label{prop:trivial-upper-bound-graph}
    Let $H$ be a graph with $\delta(H)=\delta \ge 1$. Then
    \[\wsat(n,H) \le  (\delta - 1)\cdot n + O(1).\]
\end{proposition}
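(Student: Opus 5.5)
Let $H$ have $h=|V(H)|$ vertices and a vertex $v$ of degree $\delta$. I would build a weakly $H$-saturated graph $F$ on $n$ vertices explicitly: a clique $K$ on a fixed set $S$ of $h-1$ vertices, plus a fixed set $T\subseteq S$ of $\delta-1$ vertices. Every remaining vertex $u\notin S$ is joined to all of $T$. Then
\[|E(F)|=\binom{h-1}{2}+(\delta-1)(n-h+1)=(\delta-1)n+O(1),\]
as required. It remains to exhibit an order of adding the missing edges in which each added edge creates a new copy of $H$.

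\textbf{Step 1: edges from outside vertices into $S$.} Fix $u\notin S$ and a vertex $s\in S\setminus T$, and add the edge $us$. Embed $H$ by sending $v\mapsto u$, the $\delta$ neighbours of $v$ onto $T\cup\{s\}$, and the rest of $H-v$ injectively into $S$. This is possible because $|S|=h-1=|V(H)\setminus\{v\}|$. All edges of $H-v$ map into the clique on $S$, and all edges at $v$ map to edges $ut$ with $t\in T$, or to the new edge $us$. The copy uses $us$, so it is new. Repeating this for every $s\in S\setminus T$ and every $u\notin S$ makes each outside vertex complete to $S$. The order is harmless, since added edges only help.

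\textbf{Step 2: edges between two outside vertices.} Take $u,w\notin S$ and add the edge $uw$. Now $u$ is adjacent to all of $S$ and to $w$, and $w$ is adjacent to all of $S$. Map $v\mapsto u$ and one neighbour $x$ of $v$ to $w$. Map the other $\delta-1$ neighbours of $v$ and the remaining vertices of $H$ into $S$, omitting exactly one vertex of $S$; the count is $(h-2)$ vertices into $h-1$ slots, so this fits. Every edge of $H-v$ then goes either inside the clique $S$ or from $w$ to $S$, and all of these are present. Edges at $v$ go from $u$ to $S$ or along $uw$, which are also present. Hence a copy of $H$ containing $uw$ appears, and after all such pairs are added the graph is complete.

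\textbf{Main obstacle.} The only delicate point is Step 1: $u$ has only $\delta-1$ neighbours before the new edge is added, so the copy of $H$ must place $v$ at $u$ and use the new edge as $v$'s $\delta$-th edge. This is exactly why $\delta(H)=\delta$, and not a larger degree, governs the bound. The hypothesis $\delta\ge1$ makes $T$ well defined, possibly empty when $\delta=1$. The argument also needs $n\ge h$, which is absorbed into the $O(1)$ term.
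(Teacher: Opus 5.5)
Your proof is correct. The edge count $\binom{h-1}{2}+(\delta-1)(n-h+1)$ is right, and both embeddings check out: in Step 1 the bijection $V(H)\setminus\{v\}\to S$ sending $N(v)$ onto $T\cup\{s\}$ exists because $|S\setminus T|=h-\delta\ge 1$, and in Step 2 the new edge $uw$ is the image of an edge $vx$.

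The paper only cites this result, but your argument is its construction from the upper bound in Proposition~\ref{prop:bounds-via-eta-H}, specialized to $r=s=2$: a clique core, each outside vertex joined to a fixed $(\delta-1)$-set (a minimum weakly saturated link, $\eta(H)=\delta-1$), and then the final step of Proposition~\ref{prop:short-tuza-final-building-step}. Your version also handles $\delta=1$ (where $s(H)=1$) directly, whereas the paper treats that case through Proposition~\ref{prop:wsat-behavior-s-1}.
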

This upper bound is asymptotically optimal, since, for example, it is attained by the clique $K^2_{\delta+1}$~\cite{1982Frankl, 1984KalaiRigid, 1985Kalai}, for which $\wsat(n,K^2_{\delta+1})=(\delta-1)\cdot n - \binom{\delta}{2}$ for all $n\ge \delta+1$.

In view of this result, it is natural to seek a general lower bound in terms of $\delta(H)$ as well. In the same paper~\cite{2007Sidorowicz}, the following result of this kind was established.
\begin{proposition}[\cite{2007Sidorowicz}]\label{prop:trivial-lower-bound-graph}
    Let $H$ be a graph with $\delta(H)=\delta \ge 1$. Then for all \mbox{$n\ge \lbr{V(H)}$},
    \[\wsat(n,H) \ge \frac{\delta-1}{2}\cdot n.\]
\end{proposition}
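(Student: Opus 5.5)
Let $F\in\Wsat(n,H)$ and fix a saturating order $e_1,\dots,e_k$ of the edges of $K_n^2\setminus F$. Write $F_i=F\cup\{e_1,\dots,e_i\}$. The plan is a degree-counting argument showing that every vertex already has degree at least $\delta-1$ in $F$, after which the handshake lemma finishes the proof.

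The key observation is the following. Suppose the edge $e_i=uv$ is added and creates a new copy $H'$ of $H$ in $F_i$. Then $H'$ contains $e_i$, so $u$ and $v$ are vertices of $H'$. Every vertex of $H'$ has degree at least $\delta$ in $H'$. Among the edges of $H'$ at $u$, only $e_i$ is new, so
\[
\deg_{F_{i-1}}(u)\ge \delta-1,
\]
and the same holds for $v$.

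Next, for each vertex $v$, consider the first time an edge incident to $v$ is added. Two cases arise.

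\begin{itemize}
\item If some edge at $v$ is missing from $F$, let $e_i$ be the first added edge containing $v$. No edge at $v$ was added before step $i$, so $\deg_{F_{i-1}}(v)=\deg_F(v)$. The observation then gives $\deg_F(v)\ge\delta-1$.
\item If no edge at $v$ is missing from $F$, then $\deg_F(v)=n-1$. Since $n\ge |V(H)|\ge \delta+1$, this is again at least $\delta-1$.
\end{itemize}

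So in all cases $\deg_F(v)\ge \delta-1$. Summing over all vertices,
\[
|E(F)|=\tfrac12\sum_v \deg_F(v)\ge \tfrac{\delta-1}{2}\,n .
\]
Taking $F$ of minimum size gives the bound on $\wsat(n,H)$.

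There is no serious obstacle here. The only points to check are that the new copy genuinely uses $e_i$ (which is what "creates a new copy" means), and that the hypothesis $n\ge|V(H)|$ covers the degenerate case where all edges at $v$ already lie in $F$. The hypothesis also guarantees that $K_n$ itself contains $H$, so the saturation process is meaningful.
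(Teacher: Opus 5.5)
Your proof is correct and matches the paper's approach: the paper cites this statement rather than reproving it, but its Proposition~\ref{prop:lower-bound-via-delta-m} with $r=2$ and $m=1$ is exactly this bound. Its proof is your argument: the first added edge at $v$ shows $\deg_F(v)\ge\delta-1$, a vertex with all its edges in $F$ has degree at least $|V(H)|-1\ge\delta$, and double counting finishes.
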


This bound was later strengthened as follows~\cite{2013Faudree, 2025TerekhovComb}.
\begin{theorem}[\cite{2013Faudree, 2025TerekhovComb}]\label{thrm:optimal-lower-bound-graph}
    Let $H$ be a graph with $\delta(H)=\delta \ge 1$. Then for all \mbox{$n\ge \lbr{V(H)}$},
    \[\wsat(n,H) \ge \br{\frac{\delta}{2}-\frac{1}{\delta + 1}}\cdot n.\]
\end{theorem}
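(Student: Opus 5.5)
We will work directly with a weakly $H$-saturated graph $F$ on $n\ge |V(H)|$ vertices and an admissible order $e_1,\dots,e_k$ of the missing edges, and use discharging on $\sum_v \deg_F(v)=2|E(F)|$. The goal is to show that the average $F$-degree is at least $\delta-\frac{2}{\delta+1}$.

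\emph{Step 1: no vertex has $F$-degree below $\delta-1$.} We may assume $F$ is not complete, since otherwise the bound is trivial for $n\ge|V(H)|\ge\delta+1$. Take any vertex $u$ that is not $F$-adjacent to everything, and let $e_i=uv$ be the first added edge containing $u$. The new copy of $H$ contains $e_i$, so in it $u$ has degree at least $\delta$. All of these edges except $e_i$ are already present, and they are all $F$-edges, because $e_i$ is the first added edge at $u$. Hence $\deg_F(u)\ge\delta-1$, and every vertex of $F$ has degree at least $\delta-1$. Let $S$ be the set of \emph{small} vertices, those with $\deg_F=\delta-1$, and let $T=V\setminus S$.

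\emph{Step 2: structure around small vertices.} Order the vertices of $S$ by the time $t(u)$ of their first added edge. Fix $u\in S$ with first added edge $e_{t(u)}=uv$, and let $H_u$ be the copy of $H$ created at that moment. Since $\deg_F(u)=\delta-1$, $u$ has degree exactly $\delta$ in $H_u$, so $H_u$ contains all $F$-neighbours of $u$ together with $v$. Now take an $F$-neighbour $w\in S$ of $u$. Then $w\neq v$, because $uv$ is not an $F$-edge, and $w$ has degree at least $\delta$ in $H_u$. Since $e_{t(u)}$ does not contain $w$, $w$ must already have received an added edge, so $t(w)<t(u)$. Applying this symmetrically to the pair $w,u$ gives a contradiction, so \emph{$S$ is independent in $F$}. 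Thus every small vertex sends $\delta-1$ edges into $T$. More strongly, at time $t(u)$ every $F$-neighbour of $u$ already has current degree at least $\delta$ inside the copy. I intend to exploit this further, as follows.

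\emph{Step 3: discharging.} Give each vertex charge $\deg_F(v)$; we want every vertex to end with charge at least $\delta-\frac{2}{\delta+1}$. Small vertices are short by $\frac{\delta-1}{\delta+1}$, and vertices of $T$ of degree $\delta+j$ have surplus $j+\frac{2}{\delta+1}$. The naive rule is that each $w\in T$ sends $\frac{1}{\delta+1}$ to each small neighbour. This is enough if each $w\in T$ with $\deg_F(w)=\delta+j$ has at most $(j+\frac{2}{\delta+1})(\delta+1)=j(\delta+1)+2$ small neighbours. For $j\ge 1$ this holds automatically once $\deg_F(w)\le j(\delta+1)+2$. The dangerous vertices are those of degree exactly $\delta$ with at least $3$ small neighbours. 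To rule out (or compensate for) these, we will look at the small neighbour $u$ of $w$ with the earliest time $t(u)$. At that moment $w$ lies in $H_u$ with degree at least $\delta$, but $w$ cannot yet have been hit by an added edge through another small neighbour. The aim is to show that either $w$ already had an added edge (forcing a chain back to some other vertex whose surplus we can charge), or $w$ has few small neighbours.

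\emph{Main obstacle.} The hard step is this last one: bounding, for degree-$\delta$ vertices of $T$, the number of small neighbours, or redistributing the charge along the time order. Step 2 alone only yields $\delta-\frac{\delta}{2\delta-1}$. I would first try to prove the local claim that every vertex $w\in T$ of $F$-degree $\delta$ is adjacent to at most $2$ small vertices. Failing that, I would use a two-round discharging in which charge also flows along the earlier added edges at $w$. The extremal pattern to aim for consists of blocks of $\delta+1$ vertices with two small vertices, as suggested by the value $\frac{2}{\delta+1}$ and the sharpness of $K^2_{\delta+1}$.
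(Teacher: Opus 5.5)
Steps 1--2 and the bookkeeping in Step 3 are correct. The gap is that the proposal stops at the one step that carries the proof. You never show that a vertex $w\in T$ with $\deg_F(w)=\delta$ has at most two neighbours in $S$, and you give no substitute redistribution. Without this, the naive rule is not justified for $\delta\ge 3$. As you note yourself, Step 2 alone only gives average degree $\delta-\frac{\delta}{2\delta-1}$, and the suggested ``two-round discharging'' is never specified. As written this is a plan, not a proof. The missing idea is that the alternative ``$w$ already had an added edge'' need not be compensated for. It is impossible once $w$ has two small neighbours, as you see from the copy created by $w$'s \emph{own} first added edge.

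Here is the missing lemma. Let $w\in T$ with $\deg_F(w)=\delta$ have at least two small neighbours. Let $u^*$ be one of them with $t(u^*)$ minimal, and write $e_{t(u^*)}=u^*v^*$; note $w\notin\{u^*,v^*\}$, since $wu^*\in F$ and $u^*v^*\notin F$.

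\emph{Case 1: $w$ lies in none of $e_1,\dots,e_{t(u^*)}$.} Then its only available edges are its $\delta$ edges in $F$, so $H_{u^*}$ contains all of them. Every other small neighbour $u'$ of $w$ then lies in $H_{u^*}$ with degree at least $\delta>\deg_F(u')$. So $u'$ needs an added edge among $e_1,\dots,e_{t(u^*)}$. By minimality $t(u')=t(u^*)$, hence $u'\in e_{t(u^*)}$, i.e.\ $u'=v^*$.

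\emph{Case 2: $t(w)<t(u^*)$.} Let $e_{t(w)}=wz$ create $H_w$. At that moment $w$ has only its $F$-edges and $wz$, so $H_w$ omits at most one $F$-edge at $w$. Hence $H_w$ contains a small neighbour $u$ of $w$. Since $u\notin\{w,z\}$ ($wu\in F$, $wz\notin F$) and $\deg_{H_w}(u)\ge\delta>\deg_F(u)$, $u$ needs an added edge before time $t(w)$. This gives $t(u)<t(w)<t(u^*)$, contradicting minimality.

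So every degree-$\delta$ vertex of $T$ has at most two small neighbours. Your naive rule (each $w\in T$ sends $\frac{1}{\delta+1}$ to each small neighbour) then finishes the proof.

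With this lemma your argument is a short, self-contained proof for graphs. The paper instead obtains the statement as the case $r=2$ of Theorem~\ref{thrm:optimal-lower-bound-hyper}: count polymatroids give Theorem~\ref{thrm:gamma-lower-bound-hyper}, and a Kruskal--Katona-based argument gives the estimate $\gamma_{2,H}\ge\frac{\delta}{2}-\frac{1}{\delta+1}$. That route is heavier, but it extends to hypergraphs and gives the more refined bound $\gamma_{2,H}(n-|V(H)|)+|E(H)|-1$.
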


Moreover, the above bound is asymptotically optimal, as the following result shows~\cite{2025TerekhovComb}.
\begin{proposition}[\cite{2025TerekhovComb}]\label{prop:graph-lower-bound-is-optimal}
    For every integer $\delta\ge 1$ there exists a graph $H$ with $\delta(H) = \delta$ such that
    \[\wsat(n,H) \le \br{\frac{\delta}{2}-\frac{1}{\delta + 1}}\cdot n + O(1).\]
\end{proposition}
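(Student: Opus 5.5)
The plan is a direct construction: a graph $H$ with $\delta(H)=\delta$, together with a sparse weakly $H$-saturated host graph $F$. The target density suggests what $F$ should look like. We have
\[
\frac{\delta}{2}-\frac{1}{\delta+1}=\frac{\binom{\delta+1}{2}-1}{\delta+1},
\]
which is exactly the edge density of $K_{\delta+1}^-$, the clique $K_{\delta+1}$ with one edge removed. So I will take $F$ to consist of two parts. The first is a fixed ``core'' clique $C=K_s$ on a constant number $s=s(\delta)$ of vertices, contributing only $O(1)$ edges. The second is $\lfloor (n-s)/(\delta+1)\rfloor$ vertex-disjoint copies of $K_{\delta+1}^-$, plus at most $\delta$ leftover vertices handled with $O(1)$ edges. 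Then $|E(F)|\le\br{\frac{\delta}{2}-\frac{1}{\delta+1}}n+O(1)$. Everything then rests on choosing $H$ so that $F\in\Wsat(n,H)$.

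For $H$ I would start from $K_{\delta+1}$, so that adding the missing edge inside each copy of $K_{\delta+1}^-$ immediately completes a copy of $H$. To get $\delta(H)=\delta$ together with a second ``propagation'' mechanism, I would attach a gadget. The first candidate is the graph on $\delta+2$ vertices consisting of a clique $Q=K_{\delta+1}$ and an extra vertex $x$ joined to exactly $\delta$ vertices of $Q$. This graph has minimum degree $\delta$. Its useful move is the following: if a vertex $u$ is already adjacent to all vertices of a $(\delta+1)$-clique $B$ except one vertex $w$, then adding $uw$ creates a copy of $H$. In that copy, $x\mapsto w$, and $Q$ is $B\setminus\{w\}$ together with one further common neighbour $y$ of $B\setminus\{w\}$. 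The edge $uw$ itself must then be one of the edges $x$–$Q$ of this copy. The ordering of added edges will be arranged so that each addition is an instance of this move or of closing a clique inside the current closure.

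The order of additions would be as follows. First, complete every $K_{\delta+1}^-$ to $K_{\delta+1}$. Next, join each completed clique $B$ to the core $C$ vertex by vertex. Because $C$ is a large clique, it supplies the many common neighbours required for the copy of $Q$. Finally, once every vertex is fully joined to $C$, add all edges between distinct cliques $B,B'$, again routing each copy of $H$ through $C$. Inducting along this order, one checks that every missing edge is eventually added, so the closure of $F$ is $K_n$.

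The main obstacle is the very first cross edge between a fresh clique $B$ and the core. Initially no vertex of $C$ has any neighbour in $B$, so the ``adjacent to all but one'' hypothesis of the move fails: this is a chicken-and-egg problem. If the gadget above does not resolve it, I would modify $H$ so that one of its vertices has degree $\delta$ while its clique neighbourhood may be split between $B$ and $C$. Two options to try are gluing two copies of $K_{\delta+1}$ along a vertex with a degree-$\delta$ connector, or taking a disjoint union of $K_{\delta+1}$ with a graph that can be embedded entirely inside $C\cup\{w\}$ using the single new edge. The point is that a new edge $uw$ with $u\in C$ can then be the unique edge of the copy leaving $C$. I expect the proof's real work to be designing this $H$ so that the first attachment step needs no extra edges in $F$, since even one extra edge per clique would destroy the constant $-\frac{1}{\delta+1}$.
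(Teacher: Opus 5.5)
\textbf{There is a genuine gap: you never fix an $H$ that works.} Your host graph is the right one. A clique on a constant-size core plus vertex-disjoint copies of $K_{\delta+1}^-$ is exactly what the paper uses, and your edge count is correct. But the whole argument rests on $H$, and the candidate you do give provably fails.

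Since $F$ has about $n/(\delta+1)$ components and the final graph is connected, some added edges must join two components of the current graph. Such an edge $uw$ is a bridge of the current graph. It is therefore a bridge of every copy of $H$ containing it, since a cycle through $uw$ in the copy would be a cycle in the host. So some component of $H$ must have a bridge. But $K_{\delta+1}$ and your gadget (which is $K_{\delta+2}$ minus an edge) are $2$-edge-connected for $\delta\ge 2$.

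Your fallback, a component embedded in $C\cup\{w\}$, also fails, because $w$ would have degree $1$ in it. Moreover, even once one edge $uw$ joins $B$ to $C$, for $\delta\ge 3$ every further pair $u'\in C$, $w'\in B$ has at most one common neighbour. By contrast, the endpoints of any edge of $K_{\delta+1}$ or $K_{\delta+2}^-$ have at least $\delta-1$ common neighbours. Your ``all but one'' move only produces the last of the $\delta+1$ edges from $u$ to $B$. Nothing brings $u$ from one neighbour in $B$ up to $\delta$.

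\textbf{The missing idea} is to drop the propagation mechanism and let a disconnected $H$ encode the attachment directly. The degree condition forces the component through the bridge $uw$ to contain a whole $(\delta+1)$-clique on the $w$ side.

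The paper does this via Proposition~\ref{prop:upper-bound-via-G-hyper} with $r=2$, $G=K_{\delta+1}$, $P=V(G)$. It orders the $(\delta+1)^2$ edges between two disjoint $(\delta+1)$-sets $P,W$ as $e_1,\dots,e_m$. It lets $H_i$ be $K_{\delta+1}$ on $P$ plus $K_{\delta+1}$ on $W$ plus $e_1,\dots,e_i$, and takes $H=K_{\delta+1}\sqcup\bigsqcup_{i=0}^{m}H_i$, so $\delta(H)=\delta$. A fixed $(\delta+1)$-subset $Z$ of the core plays $W$, and each block $B$ plays $P$. The steps are:
\begin{enumerate}
\item Completing $B$ creates a copy of $H_0$.
\item The $i$-th cross edge creates a copy of $H_i$ on $B\cup Z$. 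The remaining components of $H$ sit in the rest of the core (Proposition~\ref{prop:wsat-family-equal-wsat-disjoint}, at a cost of $O(1)$ edges).
\item Each edge between two blocks then creates a $K_{\delta+1}$ using $\delta-1$ vertices of $Z$ (Proposition~\ref{prop:short-tuza-final-building-step}).
\end{enumerate}
For $s-1=1$ the R\"odl covering is just a partition into blocks, so the error term is $O(1)$.

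Since copies need not be induced, in the graph case even $H=K_{\delta+1}\sqcup J$ suffices, where $J$ is two disjoint copies of $K_{\delta+1}$ joined by a single edge. Every edge from $u\in C$ to a completed block $B$ is the joining edge of a copy of $J$ formed by $B$ and a $(\delta+1)$-clique of $C$ through $u$, with the $K_{\delta+1}$ component placed elsewhere in $C$.
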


In this paper, we investigate a generalization of this asymptotically optimal lower bound to $r$-uniform hypergraphs. The role of $\delta(H)$ is played by the minimum positive codegree parameter $\delta^*(H)$, which is defined as the minimum positive degree of an $(r-1)$-subset of vertices.

\begin{definition}
    Let $r \ge 1$ be an integer. For an $r$-uniform hypergraph $H$ and a subset $U \seq V(H)$ define the link hypergraph
    \[L_H(U)=\fbr{e\setminus U\mid e\in E(H),\ U\seq e}.\]
    For a non-empty $H$, set\footnote{Here for a set $T$ and integer $k \ge 0$, $\binom{T}{k}$ denotes the set $\fbr{S \seq T \mid \lbr{S}=k}$.}
    \[\delta^*(H)=\min\fbr{\lbr{L_H(U)}\ \middle|\ \ U\in \binom{V(H)}{r-1},\ \lbr{L_H(U)}\neq 0}.\]
    For an empty $H$, set $\delta^*(H)=0$.
\end{definition}

For graphs ($r=2$) without isolated vertices, this parameter coincides with $\delta(H)$. There is a general upper bound on $\wsat(n,H)$ in terms of $\delta^*(H)$~\cite{2023Bulavka} that mirrors the corresponding upper bound for graphs.
\begin{proposition}[\cite{2023Bulavka}]\label{prop:trivial-upper-bound-hyper}
    Let $r \ge 1$ and $\delta\ge 1$ be integers. Let $H$ be an $r$-uniform hypergraph with $\delta^*(H)=\delta$. Then
    \[\wsat(n,H)\le (\delta-1)\cdot\binom{n}{r-1} + O(n^{r-2}).\]
\end{proposition}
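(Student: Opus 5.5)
The idea is to build an explicit weakly $H$-saturated hypergraph $F$ on $[n]$. Fix an $(r-1)$-set $U_0\subseteq V(H)$ with $\lbr{L_H(U_0)}=\delta$, and let $t=\lbr{V(H)}$. Take a set $A$ of $t$ vertices (the ``core''); put all $\binom{A}{r}$ edges into $F$. For the remaining vertices we choose edges so that every $(r-1)$-set $S\seq[n]$ lies in at least $\delta-1$ edges of $F$, and these edges are determined in a controlled way.

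To make this concrete, order the vertices $1,\dots,n$ with $A=\{1,\dots,t\}$. For each $(r-1)$-set $S$ with $\max S>t$, put into $F$ the edges $S\cup\{a\}$ for $\delta-1$ fixed core vertices $a\in A\setminus S$; there are enough of them since $t\ge \delta+r-1$. The number of such $S$ is at most $\binom{n}{r-1}$, so $\lbr{E(F)}\le(\delta-1)\binom{n}{r-1}+\binom{t}{r}$. (Actually each edge might be counted for several $S$, which only helps.)

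It remains to show that every missing edge $e$ can be added so as to create a copy of $H$. We add the missing edges in a suitable order, say by induction on the number of vertices of $e$ outside $A$, and lexicographically within that. Given a missing edge $e$, choose $S\subset e$ to be an $(r-1)$-subset and let $v=e\setminus S$. We embed $H$ by mapping $U_0\mapsto S$ and mapping $L_H(U_0)$ so that one of its vertices goes to $v$ while the other $\delta-1$ go to the designated core neighbours of $S$. All remaining vertices of $H$ are mapped into $A$, avoiding images already used; this requires $t$ to be somewhat larger, so we take $\lbr{A}=2t$.

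Every edge of this copy other than $e$ is of one of the following types:
\begin{itemize}
\item it lies inside $A$, so it is present;
\item it is one of the $\delta-1$ edges $S\cup\{a\}$, which are present by construction;
\item it contains some of $S\cup\{v\}$ together with core vertices, but is not of the form $S\cup\{x\}$.
\end{itemize}
Edges of the third type meet $[n]\setminus A$ in fewer vertices than $e$ does, because they omit at least one vertex of $e$ and replace it with a core vertex. Hence they were already added by induction. This is precisely where the choice of $U_0$ as a minimum-codegree set matters: the only edges of $H$ containing $U_0$ are the $\delta$ edges $U_0\cup\{x\}$, $x\in L_H(U_0)$, so no other edges containing all of $S$ are required.

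I expect the main obstacle to be making the induction order consistent. The third-type edges must be strictly earlier, and edges lying entirely outside $A$, together with the base cases where $e$ meets $A$, need careful handling. The designated core neighbours must also be chosen disjoint from the images required elsewhere. I would handle this by choosing $S$ to contain the non-core vertices of $e$ with the largest labels, and by using a potential such as $\lbr{e\setminus A}$ followed by the sorted labels. The $O(n^{r-2})$ error term absorbs the constant core, while any double counting only improves the bound.
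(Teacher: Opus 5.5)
There is a genuine gap, and it sits at exactly the step you flagged. You claim that every edge of the third type meets $[n]\setminus A$ in fewer vertices than $e$. This is false whenever $1\le |e\setminus A|\le r-2$, which can happen once $r\ge 3$. In that range $S$ must contain core vertices. An edge of the copy can then contain all of $e\setminus A$ while omitting a \emph{core} vertex of $S$, so it lies on the same level as $e$ and your induction does not cover it.

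At such a level you actually face a separate weak-saturation problem one uniformity lower: the link of $e\setminus A$ inside $A$, which in your $F$ contains only the designated edges. No ordering fixes this in general, because the obstruction lies in $F$ itself. Take $r=3$, $\delta=2$, and let $H$ be the octahedron, i.e.\ the $8$ faces of the cross-polytope. Every non-antipodal pair lies in exactly two faces, so $\delta^*(H)=2$, and every vertex link is a $4$-cycle. Suppose the designated core vertex is the same $d$ for all $S\not\ni d$, which is the natural reading of ``fixed''. Then $F=\binom{A}{3}\cup\{T\in\binom{[n]}{3} : d\in T\}$.

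The first edge $e^*$ to be added satisfies $e^*\not\subseteq A$ and $d\notin e^*$; pick $o\in e^*\setminus A$. The three other faces of the copy through $o$ are not inside $A$, so they must all contain $d$. But $d$ lies in at most two of the four faces through $o$, since each vertex of the ($4$-cycle) link of a vertex $u$ of $H$ lies in exactly two faces through $u$. So not a single edge can be added, and $F$ is not weakly $H$-saturated.

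The repair is cheap, and it is precisely the paper's construction (Proposition~\ref{prop:bounds-via-eta-H} with $s=r$, where $\eta(H)=\delta-1$). Put into $F$ \emph{all} edges with at most $r-2$ vertices outside $A$; there are only $O(n^{r-2})$ of them. Designate $\delta-1$ core neighbours only for $(r-1)$-sets $S\subseteq[n]\setminus A$.

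For $|e\setminus A|=r-1$, take $S=e\setminus A$. Every other edge of your copy either contains $S$, or it does not. If it contains $S$, it is $e$ or a designated edge, because $U_0$ has codegree exactly $\delta$. If it does not, it has at most $r-2$ vertices outside $A$ and is already present.

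Edges with $|e\setminus A|=r$ are then handled as you describe, or by Proposition~\ref{prop:short-tuza-final-building-step}. The intermediate levels where your argument breaks never arise.
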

This upper bound is asymptotically optimal, since it is attained by the complete $r$-uniform hypergraph $K^r_{r-1+\delta}$, for which $\wsat(n,K^r_{r-1+\delta})=\binom{n}{r}-\binom{n-\delta+1}{r}=(\delta-1)\cdot\binom{n}{r-1} + O(n^{r-2})$.

In this paper, we establish a lower bound on $\wsat(n,H)$ for $r$-uniform hypergraphs in terms of $\delta^*(H)$, thereby extending the graph-case bound from Theorem~\ref{thrm:optimal-lower-bound-graph}.
\begin{theorem}\label{thrm:optimal-lower-bound-hyper}
    Let $r \ge 1$ and $\delta\ge 1$ be integers. Let $H$ be an $r$-uniform hypergraph with $\delta^*(H)=\delta$. Then for all $n\ge \lbr{V(H)}$,
    \[\wsat(n,H)\ge \br{\frac{\delta}{r}-\frac{1}{\binom{r+\delta - 1}{r - 1}}}\cdot\binom{n}{r-1}.\]
\end{theorem}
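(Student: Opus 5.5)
The plan is to prove the bound with a polymatroid version of the linear-algebraic method. We will construct a function $f\colon 2^{E(K_n^r)}\to\R_{\ge0}$ with four properties. It is normalized, $f(\emptyset)=0$. It is monotone and submodular. It satisfies $f(\{e\})\le 1$ for every edge. Finally, it is \emph{$H$-closed}: whenever $e\notin F$ and $F\cup\{e\}$ contains a copy of $H$ through $e$, we have $f(F\cup\{e\})=f(F)$. These properties suffice: if $F\in\Wsat(n,H)$ with saturating order $e_1,\dots,e_k$, then closedness gives $f(K_n^r)=f(F)$, and subadditivity gives $f(F)\le |E(F)|$. Hence $\wsat(n,H)\ge f(K_n^r)$, and it remains to design $f$ so that $f(K_n^r)\ge \bigl(\frac{\delta}{r}-\binom{r+\delta-1}{r-1}^{-1}\bigr)\binom{n}{r-1}$. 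Unlike a matroid rank, $f$ may take fractional values, and this is what allows the non-integer coefficient.

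The natural building blocks are indexed by the $(r-1)$-sets $U\subseteq V(K_n)$. For each $U$ let $h_U(F)=\min(\deg_F(U),\delta)$, where $\deg_F(U)=|L_F(U)|$. This is the rank function of a uniform matroid on the star of $U$, so $f_0=\frac1r\sum_U h_U$ is a polymatroid with $f_0(\{e\})=1$. Moreover, $f_0(K_n^r)=\frac{\delta}{r}\binom{n}{r-1}$, which is exactly the main term. However, $f_0$ is not $H$-closed, because adding $e$ can raise some $\deg(U)$ from $\delta-1$ to $\delta$ for $U\subset e$.

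We correct this with the Dilworth-truncation style construction
\[
 f(F)=\min_{F'\subseteq F}\Bigl(f_0(F')+\sum_{\text{blocks}} c\cdot(\ldots)\Bigr),
\]
or more concretely, by lowering each $h_U$ to $\min(\deg,\delta-1)$ plus a shared fractional "bonus" that an edge can collect only when all its $(r-1)$-subsets are already saturated. The key local fact is the following. If $e$ closes a copy of $H$, then every $U\subset e$ has $\deg_H(U)\ge\delta^*(H)=\delta$, so each such $U$ already has degree $\ge\delta-1$ in $F$ within that copy. We want $f$ to count the $\delta$-th edge at each $U$ only through a global budget that is already spent once all of $e$'s subsets reach degree $\delta-1$ inside a common configuration.

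The extremal example $K^r_{r-1+\delta}$ dictates the constant. It has $\binom{r+\delta-1}{r-1}$ sets $U$, each of degree $\delta$, and $\wsat$ of it falls short of $\frac{\delta}{r}\cdot(\#U)$ by exactly $1$. This suggests the weight $1/\binom{r+\delta-1}{r-1}$ per $(r-1)$-set as the amount the closure may "lose". Accordingly, we aim to define $f$ so that each $(r-1)$-set of full degree contributes $\frac{\delta}{r}-\binom{r+\delta-1}{r-1}^{-1}$. We then verify that a single added edge in a copy of $H$ never increases $f$, because the relevant local sums are dominated by those of a copy of $K^r_{r-1+\delta}$-type configuration. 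Given this, computing $f(K_n^r)$ is immediate, since every $U$ has degree $n-r+1\ge\delta$ when $n\ge|V(H)|$.

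The main obstacle is the simultaneous requirement of submodularity and $H$-closedness for such a fractionally corrected $f$. I would first try to define $f$ as the truncation $f(F)=\min\sum_i \bigl(f_0(F_i)-\lambda\bigr)^+$ over partitions of $F$, with $\lambda=1$ scaled appropriately, mirroring how the graph case generalizes Theorem~\ref{thrm:optimal-lower-bound-graph}. Such truncations of polymatroids are again polymatroids, so submodularity comes for free. The remaining work is then a local averaging argument for closedness and for the value on $K_n^r$.
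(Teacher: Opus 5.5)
\textbf{There is a genuine gap.} Your reduction is correct and is exactly the paper's polymatroidal method (Theorem~\ref{thrm:poly-lower-bound}): a normalized, monotone, submodular $f$ with $f(\{e\})\le 1$ that is $H$-closed gives $\wsat(n,H)\ge f(K_n^r)$. But that is the easy half. You never define $f$. The two properties that carry the whole theorem, namely $H$-closedness and $f(K_n^r)\ge\bigl(\frac{\delta}{r}-\binom{r+\delta-1}{r-1}^{-1}\bigr)\binom{n}{r-1}$, are only asserted (``the relevant local sums are dominated by\ldots'', ``a local averaging argument'').

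Moreover, the one concrete candidate you offer provably fails on the extremal case $\wt{H}\cong K^r_{r+\delta-1}$ with $\delta\ge 2$. That candidate is a truncation of $f_0=\frac{1}{r}\sum_U\min(\deg(U),\delta)$ by a constant $\lambda$ per block of a partition. Take any partition of $E(\wt{H})$ and let $F_j$ be the block containing $e$. Each of the $r$ sets $U\subset e$ has degree at most $\delta-1$ in $F_j\setminus\{e\}$, so $f_0(F_j)-f_0(F_j\setminus\{e\})=1$; if $F_j=\{e\}$, that block contributes $1-\lambda$. Hence $f(\wt{H})\ge f(\wt{H}\setminus\{e\})+1-\lambda$, so closedness fails for every $0\le\lambda<1$. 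For $\lambda\ge 1$, the partition into singletons gives $f\equiv 0$. Any construction built on $\min(\deg,\delta)$ ``sees'' the $\delta$-th edge at every $U\subset e$, and an additive per-block correction cannot remove this.

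\textbf{What the paper does instead.} It replaces $\min(\deg,\delta)$ by the shadow count $\lbr{\shadow{r-1}(G)}=\sum_U\min(\deg_G(U),1)$, scaled by $\gamma_{r,H}$ and shifted by the $H$-dependent constant $a_0=|E(H)|-1-\gamma_{r,H}\lbr{\shadow{r-1}(H)}$. It then takes the polymatroid induced by this submodular function, realized via count matroids on $\mulg{K_n^r}{q}$. With this choice:
\begin{itemize}
\item $\rho(\wt{H})\le L_{\mb{a}}(\wt{H})=|E(H)|-1$, and closedness reduces, by Proposition~\ref{prop:count-poly-indep-criterion}, to showing that no nonempty $G\subseteq\wt{H}\setminus\{e\}$ has $|E(G)|>L_{\mb{a}}(G)$. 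This is exactly the definition of $\gamma_{r,H}$.
\item The value on $K_n^r$ is not ``immediate'': it equals $L_{\mb{a}}(K_n^r)$ only by Theorem~\ref{thrm:rank-count-matroid}, whose proof needs a connectivity argument for count matroids.
\item The constant comes from $\gamma_{r,H}\ge\frac{\delta}{r}-\binom{r+\delta-1}{r-1}^{-1}$ (Theorem~\ref{thrm:lower-bound-on-gamma-via-delta}). Its proof rests on the Kruskal--Katona-based estimate $\delta\lbr{\shadow{r-1}(G)}-r|E(G)|\ge a$ for small $G$ (Proposition~\ref{prop:f-r-delta-shadow-bound}), together with Corollary~\ref{collorary:optimal-lower-bound-simplification} to show the additive constant term is nonnegative.
\end{itemize}
Your deferred ``local averaging argument'' would have to contain this shadow inequality, and nothing in the proposal supplies it. The degenerate cases $r=1$ and $\delta=1$ also need separate, though easy, treatment.
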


We also show that this bound is asymptotically optimal.
\begin{theorem}\label{thrm:hyper-lower-bound-is-optimal}
    For every integers $r \ge 1$ and $\delta\ge 1$ there exists an $r$-uniform hypergraph $H$ with $\delta^*(H)=\delta$ such that
    \[\wsat(n,H) \le \br{\frac{\delta}{r}-\frac{1}{\binom{r+\delta - 1}{r - 1}}}\binom{n}{r-1} + o(n^{r-1}).\]
\end{theorem}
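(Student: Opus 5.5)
The plan is a packing construction. Asymptotic Steiner systems supply the $\frac{\delta}{r}\binom{n}{r-1}$ edges, and removing one edge from each block gives the $-\binom{r+\delta-1}{r-1}^{-1}$ saving; a carefully designed $H$ should then complete everything else. I have not verified that such an $H$ exists, so the statement is unproven until the $H$-design step below is done.

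Write $K=K^r_{r+\delta-1}$ and $N=\binom{r+\delta-1}{r-1}$. Then $K$ has $\binom{r+\delta-1}{r}=\frac{\delta}{r}N$ edges, and every $(r-1)$-subset of $V(K)$ has codegree exactly $\delta$. By the Rödl nibble (or Keevash's existence of designs) there is a family $\mathcal K$ of copies of $K$ in $[n]$ with the following properties:
\begin{itemize}
\item any two copies share at most $r-2$ vertices;
\item all but $o(n^{r-1})$ of the $(r-1)$-sets of $[n]$ lie in some member of $\mathcal K$.
\end{itemize}
Hence $|\mathcal K|=(1-o(1))\binom{n}{r-1}/N$.

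Define $F$ as follows. From each member of $\mathcal K$, delete one edge. Add a seed: a complete hypergraph on a fixed set $C$ of $m=O(1)$ vertices, plus whatever $o(n^{r-1})$ extra edges are needed to handle the uncovered $(r-1)$-sets. The edge count is then
\[
|\mathcal K|\Bigl(\frac{\delta}{r}N-1\Bigr)+o(n^{r-1})=\Bigl(\frac{\delta}{r}-\frac1N\Bigr)\binom{n}{r-1}+o(n^{r-1}),
\]
which is exactly the bound claimed.

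Everything therefore rests on choosing $H$ with $\delta^*(H)=\delta$ so that $F\in\Wsat(n,H)$. I plan to do this in two stages.

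\emph{Stage 1.} Adding the missing edge of each damaged clique should create a copy of $H$. This requires $H$ to contain $K$ as a building block, with the rest of $H$ embeddable using the seed $C$ or other cliques. Since the members of $\mathcal K$ are almost disjoint on $(r-1)$-sets, each embedding must use only one damaged clique at a time.

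\emph{Stage 2.} Once every member of $\mathcal K$ is complete, each covered $(r-1)$-set has codegree $\delta$. I then need to add every remaining edge $e$. Such an $e$ meets each clique in at most $r-1$ vertices, so its $(r-1)$-subsets lie in different cliques. My first attempt is to make $H$ a ``connector'' hypergraph. It consists of a central edge $e_0$ together with a gadget hung on each $(r-1)$-subset $T\subseteq e_0$. The gadget is built so that it can be embedded into the clique containing the image of $T$, possibly supplemented by edges of the seed $C$ or edges added earlier. The requirement is that every $(r-1)$-set of $H$ has codegree $0$ or at least $\delta$, with some set attaining $\delta$.

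The main obstacle is designing these gadgets. A single clique supplies only $\delta-1$ edges through $T$ other than $e_0$, which is precisely what is needed to give $T$ codegree $\delta$. However, the other $(r-1)$-sets of the gadget must also reach codegree at least $\delta$, and a plain clique on $T$ plus $\delta-1$ new vertices gives some of them only $\delta-1$.

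I would try to overcome this in two ways. First, glue several cliques along shared $(r-2)$-sets, so that each deficient set gets its extra edge from a neighbouring clique of the packing. Second, add edges through the seed $C$ in a bounded number of rounds: first saturate all edges meeting $C$, then use $C$ as a universal supply of extra edges. The weak-saturation order would then be: seed edges, then missing clique edges, then edges meeting $C$, and finally all remaining edges.

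If this direct approach fails, the fallback is to choose $H$ as a disjoint union of $K$ with a large auxiliary hypergraph whose positive codegrees are all at least $\delta$. The auxiliary part would be embeddable in the already-complete neighbourhood of $C$, and an induction on $r$ via links would handle the edges that meet $C$.
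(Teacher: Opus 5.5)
Your edge count is correct and is the same count the paper uses. The proposal, however, stops where the theorem's real content begins. No $H$ is exhibited, nothing shows $F\in\Wsat(n,H)$, the roles of the seed $C$ and of the uncovered $(r-1)$-sets are never specified, and the fallback only names a direction.

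The obstacle that stopped you rests on a miscount. After Stage~1, the block through an $(r-1)$-set $T\subseteq e$ is a full $K^r_{r+\delta-1}$ on $T\cup N_T$ with $|N_T|=\delta$. It therefore supplies $\delta$ edges through $T$, not $\delta-1$. Nothing forbids $T$ from having codegree $\delta+1$ in $H$, because only the minimum positive codegree is prescribed. So take as the gadget on each $(r-1)$-subset of $e_0$ the whole block, with pairwise disjoint new vertex sets. Call the result $C_0$; all its positive codegrees are $\delta$ or $\delta+1$.

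Now suppose your packing is an exact Steiner system $S(r-1,r+\delta-1,n)$ (Keevash), and let $e$ be any non-block edge. The $r$ blocks through its $(r-1)$-subsets are distinct, and each block $X_T$ misses the vertex $e\setminus T$. Two such blocks meet exactly in $T_1\cap T_2$, since two blocks share at most $r-2$ points. Hence adding $e$ creates a copy of $C_0$, so $F\in\Wsat(n,\{K,C_0\})$. Proposition~\ref{prop:wsat-family-equal-wsat-disjoint} then gives the bound for $H=K\sqcup C_0$ at admissible $n$, and Theorem~\ref{thrm:tuza-limit-wsat} extends it to all $n$. With only a near-perfect packing, as you propose, your plan has no mechanism for edges through uncovered $(r-1)$-sets. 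A clique on a bounded set $C$ contains no edges joining $C$ to the rest of $[n]$, so it does not supply one.

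The paper (Proposition~\ref{prop:upper-bound-via-G-hyper} with $G=K^r_{r+\delta-1}$, $P=V(G)$) avoids packings and any analysis of block intersections:
\begin{itemize}
\item It fixes $Z$ with $|Z|=r+\delta-1$ and puts into $F$ all edges with at most $r-2$ vertices outside $Z$. These are only $O(n^{r-2})$ edges, and they are exactly the universal supply you were looking for.
\item It covers all $(r-1)$-subsets of $[n]\setminus Z$ by a R\"odl covering $\mc{F}$ by $(r+\delta-1)$-sets (Theorem~\ref{thrm:rodl}); overlaps between these sets are harmless.
\item On each $X\cup Z$ with $X\in\mc{F}$ it places a copy of $H_0$ minus one edge $\wt{e}\subseteq X$. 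Here $H_0$ lives on $P\cup W$ ($W$ disjoint from $P$, $|W|=|P|$) and consists of the clique on $P$ plus all edges meeting $P$ in at most $r-2$ vertices. The only new edges per block are the $\binom{r+\delta-1}{r}-1$ inside $X$.
\item It takes $H=K\sqcup H_0\sqcup H_1\sqcup\dots\sqcup H_m$, where $H_i$ adds the missing edges of $H_0$ one at a time. Every $(r-1)$-subset of $V(H_0)$ already has codegree at least $\delta$ in $H_0$, so $\delta^*(H)=\delta$, attained by $K$.
\end{itemize}
Adding $\wt{e}$ creates $H_0$, the chain then completes $X\cup Z$, and Proposition~\ref{prop:short-tuza-final-building-step} applied to $K$ adds all remaining edges. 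That route needs only R\"odl's covering theorem and works for every $n$ directly, at the price of an artificial $H$. The repaired version of yours gives a small, natural $H$ and an $O(1)$ error at admissible $n$, but relies on Keevash's theorem and on the existence of the limit.
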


\paragraph{Proof Strategy}

We prove Theorem~\ref{thrm:optimal-lower-bound-hyper} using a modification of a linear algebraic method. In this method, in order to derive a lower bound on $\wsat(n,H)$ for an $r$-uniform hypergraph $H$, one considers a matroid $M$ on the edge set $E(K_n^r)$ of the complete $r$-uniform hypergraph. If $M$ satisfies certain compatibility conditions with $H$ --- namely, that for every copy $\wt{H}$ of $H$, the set $E(\wt{H})$ is a union of circuits of $M$ --- then one obtains the lower bound $\wsat(n,H)\ge \rk(M)$.

This method was introduced in full generality by Kalai~\cite{1985Kalai} and has been used to determine $\wsat(n,H)$ for many hypergraphs $H$~\cite{2001PikhurkoExterior, 2021Kronenberg}, including complete $r$-uniform hypergraphs~\cite{1982Frankl,1985Alon, 1984KalaiRigid}. However, in~\cite{2025TerekhovLinearAlgebra} it was shown that, in the case of graphs ($r=2$), the best lower bound obtained using Kalai's linear algebraic method always has an integer asymptotic coefficient; that is, it is of the form $a\cdot n (1+o(1))$, where $a$ is an integer. This imposes limitations on the scope of the method.

In~\cite{2025TerekhovLinearAlgebra}, Kalai's linear algebraic method was modified using multigraphs, which made it possible to avoid the issue of integer coefficients. Inspired by this modification, in this work we show how to modify the linear algebraic method using polymatroids~\cite{1978Fujishige,2003Edmonds}, which enables us to obtain bounds with non-integer coefficients.

To apply this method in Theorem~\ref{thrm:optimal-lower-bound-hyper}, we require a family of polymatroids that is amenable to compatibility conditions with a pattern hypergraph $H$, analogous to those arising in the matroid setting. For this purpose, we construct a family of count polymatroids generalizing count matroids from Pikhurko's work~\cite{2001PikhurkoCount}. These polymatroids are combinatorial in nature and are therefore readily adapted to satisfy compatibility conditions of a given hypergraph. Using this family of polymatroids together with the modified linear algebraic method, we prove a more general bound, namely Theorem~\ref{thrm:gamma-lower-bound-hyper}, from which Theorem~\ref{thrm:optimal-lower-bound-hyper} follows.

As for Theorem~\ref{thrm:hyper-lower-bound-is-optimal}, we prove it by providing, for each $r\ge 1$ and $\delta \ge 1$, an explicit construction of a hypergraph $H$. To obtain the desired upper bound on $\wsat(n,H)$, we follow a strategy similar to that of~\cite{2023Shapira, 2025TerekhovTuza}: R\"odl's theorem~\cite{1985Rodl} is used to combine upper bounds on $\wsat(m,H)$ for small values of $m$ into an upper bound on $\wsat(n,H)$ for large $n$.

\paragraph{Paper Structure} In Section~\ref{sec:preliminary-wsat} we present preliminary facts about weak saturation.
In Section~\ref{sec:linear-algebra-method} we describe our modification of Kalai's linear algebraic method using polymatroids.
In Section~\ref{sec:count-polymatroids} we discuss count matroids and construct a family of count polymatroids.
In Section~\ref{sec:gamma-lower-bound-hyper} we use count polymatroids in the modified linear algebraic method and prove a general lower bound of $\wsat(n,H)$ in terms of some combinatorial characteristic of hypergraph $H$ (Theorem~\ref{thrm:gamma-lower-bound-hyper}).  We also discuss its connection with a related bound in the graph case obtained in~\cite{2025TerekhovComb} (see Subsection~\ref{subsec:gamma-connection-with-graph-case}).
In Section~\ref{sec:delta-lower-bound-hyper} we prove a lower bound on the combinatorial characteristic appearing in the general lower bound from Section~\ref{sec:gamma-lower-bound-hyper}, which in turn yields Theorem~\ref{thrm:optimal-lower-bound-hyper}.
In Section~\ref{sec:optimality} we construct hypergraphs $H$ with prescribed upper bounds on $\wsat(n,H)$ and prove Theorem~\ref{thrm:hyper-lower-bound-is-optimal}.
In Section~\ref{sec:arbitrary-sharpness-bounds} we discuss possible generalizations of Theorem~\ref{thrm:optimal-lower-bound-hyper}. In Section~\ref{sec:polymatroid-asympt} we discuss the best bound obtainable by the polymatroidal method. 

\paragraph{Notation} For an integer $n \ge 1$, let $[n]$ denote the set $\fbr{1,2,\ldots,n}$. For a set $T$ and integer $k \ge 0$, let $\binom{T}{k}$ denote the set $\fbr{S \seq T \mid \lbr{S}=k}$. For integers $n \ge 1$ and $r \ge 1$, let $K_n^r$ denote the complete $r$-uniform hypergraph on $n$ vertices. If a hypergraph $G$ is defined by specifying only its edge set $E$, then the vertex set is understood to be $V(G)=\cup_{e\in E}\ e$.


\section{Preliminary facts about weak saturation}\label{sec:preliminary-wsat}

\subsection{Asymptotic behavior of weak saturation}

In the case $\delta^*(H) = 1$, the bound in Theorem~\ref{thrm:optimal-lower-bound-hyper} is trivial. Therefore, for greater generality, we formulate general lower bounds in terms of \emph{sparseness}~\cite{1992Tuza, 2023Shapira}, which characterizes the asymptotic behavior of $\wsat(n,H)$.

\begin{definition}\label{def:sparseness}
    Let $r \ge 1$ be an integer and let $H$ be an $r$-uniform hypergraph.

    For a non-empty $H$, define the \emph{sparseness} $s(H)$ as the size of the smallest subset $S \subseteq V(H)$ such that there exists exactly one edge $U \in E(H)$ with $S \subseteq U$.

    For a empty $H$, set $s(H)=-1$.
\end{definition}

\begin{theorem}[\cite{1992Tuza}]\label{thrm:tuza-theta-wsat}
    Let $r \ge 1$ and $s\ge 1$ be integers. Let $H$ be an $r$-uniform hypergraph with $s(H) = s$. Then
    \[
        \wsat(n, H) = \Theta(n^{s - 1}).
    \]
\end{theorem}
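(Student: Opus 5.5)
We prove the two bounds of Theorem~\ref{thrm:tuza-theta-wsat} separately. Throughout, $H$ has $s(H)=s\ge 1$, $n$ is large, and $v=\lbr{V(H)}$. Fix a set $S_0\seq V(H)$ with $\lbr{S_0}=s$ that lies in exactly one edge $U_0\in E(H)$. Note that $s\le r$, because $U_0$ itself lies in exactly one edge.

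\textbf{Upper bound $O(n^{s-1})$.} Fix a set $A\seq V(K_n^r)$ with $\lbr{A}=v$. Let $F$ consist of all edges of $K_n^r$ that meet $A$ in at least $r-s+1$ vertices. The number of such edges is $\sum_{j\ge r-s+1}\binom{v}{j}\binom{n-v}{r-j}=O(n^{s-1})$. We show $F\in\Wsat(n,H)$ by adding the missing edges $e$ in order of decreasing $\lbr{e\cap A}$. Let $e$ be a missing edge with $\lbr{e\cap A}=r-s$ and $W=e\setminus A$, so $\lbr{W}=s$. We embed $H$ by mapping $S_0$ onto $W$ and $V(H)\setminus S_0$ injectively into $A$, sending $U_0\setminus S_0$ onto $e\cap A$. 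This needs $\lbr{V(H)\setminus S_0}\le v$, which holds. Under this embedding, every edge of $H$ other than $U_0$ contains at most $s-1$ vertices of $S_0$, so its image meets $A$ in at least $r-s+1$ vertices. Hence that image is already in $F$. The image of $U_0$ is $e$, so adding $e$ creates a new copy of $H$.

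Edges with smaller intersection with $A$ are handled by induction on $t=r-\lbr{e\cap A}$. We use the same embedding idea, with $S_0$ sent inside $e\setminus A$ and the remaining vertices placed so that every other edge of $H$ meets the current "higher" class. More precisely, choose $S'\seq e\setminus A$ with $\lbr{S'}=s$ and map $S_0\to S'$. Map $U_0\setminus S_0$ onto $e\setminus S'$, which is possible since $\lbr{U_0\setminus S_0}=r-s$. Map everything else into $A$, avoiding $e$. Every other edge $f$ of $H$ has $\lbr{f\cap S_0}\le s-1$. The vertices of $f$ that land outside $A$ lie in $e$, so its image has at least as many vertices in $A$ as $e$ does. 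It has strictly more unless $f\cap S_0$ together with $f\cap(U_0\setminus S_0)$ fills all the non-$A$ positions. Handling this boundary case is the main obstacle. I would resolve it by ordering the edges with equal $\lbr{e\cap A}$ lexicographically, or by instead choosing $A$ so that $v\ge 2r$ and placing $U_0\setminus S_0$ inside $A$ whenever possible. As a fallback, cite Tuza's original construction: repeatedly use only $S_0$ as the "free" part.

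\textbf{Lower bound $\Omega(n^{s-1})$.} Let $F\in\Wsat(n,H)$. We claim every $(s-1)$-set $T\seq[n]$ except $O(1)$ fraction is covered by an edge of $F$. Counting covered $(s-1)$-sets then gives $\lbr{E(F)}\binom{r}{s-1}\ge c\binom{n}{s-1}$. To prove the claim, consider an $(s-1)$-set $T$ not contained in any edge of $F$, and look at the first added edge $e_i\supseteq T$. The copy $\wt H$ created by $e_i$ contains $e_i$ as an edge, so $e_i$ is the image of some edge $U$ of $H$, and $T$ is the image of an $(s-1)$-subset $T'\seq U$. By minimality of $s$, $T'$ lies in at least two edges of $H$, or in none. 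Since $T'\seq U$, it lies in at least two. So some other edge of $\wt H$ contains $T$ and was present earlier, contradicting the choice of $e_i$ as the first edge containing $T$. Hence every $(s-1)$-set is covered by $F$, and $\lbr{E(F)}\ge\binom{n}{s-1}/\binom{r}{s-1}$. For $s=1$ the bound is trivially $\Omega(1)$, since $\wsat(n,H)\ge 1$ for $n\ge v$ unless $H$ is a single edge-like degenerate case, which is handled directly.
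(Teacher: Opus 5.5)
Your plan matches what the paper has in place of a proof (it only cites Tuza). Your upper bound is Proposition~\ref{prop:short-tuza-final-building-step} with $Z=A$. Your lower bound is Proposition~\ref{prop:lower-bound-via-delta-m} with $m=s-1$, using $\delta_{s-1}(H)\ge 2$ from Proposition~\ref{prop:sparseness-and-delta}. The lower-bound argument is complete and correct. It already covers $s=1$ with $T=\emptyset$: $s(H)=1$ forces $\lbr{E(H)}\ge 2$, so no ``degenerate case'' needs separate handling.

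The gap is in the inductive step of the upper bound. You call the ``boundary case'' the main obstacle and leave it unresolved. None of the proposed fixes closes it:
\begin{itemize}
\item Lexicographic ordering is unnecessary.
\item Placing $U_0\setminus S_0$ inside $A$ is only possible when $\lbr{e\setminus A}=s$. The image of $U_0$ must be $e$, which for $t>s$ has $t-s$ further vertices outside $A$ besides $S'$.
\item Citing Tuza is not an argument.
\end{itemize}

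In fact the boundary case never occurs, by a fact you already wrote down. Under your embedding $\phi$, the only vertices sent into $S'$ are those of $S_0$. So for every edge $f\neq U_0$ of $H$ we have $\phi(f)\cap S'=\phi(f\cap S_0)$, which has size at most $s-1<\lbr{S'}$. All vertices of $\phi(f)$ outside $A$ lie in $e\setminus A\supseteq S'$, so $\phi(f)$ misses at least one vertex of $e\setminus A$. Hence $\lbr{\phi(f)\setminus A}\le\lbr{e\setminus A}-1$, and $\phi(f)$ is already present by induction.

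This is exactly the step $\lbr{W\cap\wt S}<\lbr{\wt S}\Rightarrow\lbr{W\setminus Z}<\lbr{e\setminus Z}$ in the paper's proof of Proposition~\ref{prop:short-tuza-final-building-step}. With it, your general step also subsumes the separate base case $\lbr{e\cap A}=r-s$, and the upper bound is complete.
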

Moreover, the weak saturation has an asymptotic coefficient, as the following result shows.
\begin{theorem}[\cite{2023Shapira}]\label{thrm:tuza-limit-wsat}
    Let $r \ge 1$ and $s\ge 1$ be integers. Let $H$ be an $r$-uniform hypergraph with $s(H) = s$. Then the following limit exists:
    \[
        \lim_{n\to+\infty}\frac{\wsat(n, H)}{n^{s-1}}.
    \]
\end{theorem}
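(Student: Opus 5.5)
The plan is to prove that, after normalising by $\binom{n}{s-1}$ instead of $n^{s-1}$, $\limsup$ is at most $\liminf$. This normalisation changes the limit only by the constant factor $(s-1)!$. Set
\[
c_m=\frac{\wsat(m,H)}{\binom{m}{s-1}},\qquad c=\liminf_{m\to\infty} c_m .
\]
By Theorem~\ref{thrm:tuza-theta-wsat}, $c$ is finite. It therefore suffices to show that for every fixed $m\ge |V(H)|$ and every $\epsilon>0$,
\[
\wsat(n,H)\le (1+\epsilon)\,c_m\binom{n}{s-1}+\epsilon\binom{n}{s-1}
\]
for all large $n$. Taking $m$ along a sequence realising the $\liminf$ then gives $\limsup_n c_n\le c$, so the limit exists.

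\textbf{Covering the $(s-1)$-sets.} Fix $m$ and an optimal $F_m\in\Wsat(m,H)$. By R\"odl's theorem~\cite{1985Rodl}, there is a family $\mathcal B$ of $m$-subsets of $[n]$ such that every $(s-1)$-subset of $[n]$ lies in some block of $\mathcal B$ and
\[
|\mathcal B|\le(1+\epsilon)\binom{n}{s-1}\Big/\binom{m}{s-1}.
\]
If $s-1=0$ the statement is trivial, so assume $s\ge 2$. Place a copy of $F_m$ on each block and call the union $F$; then $|E(F)|\le(1+\epsilon)c_m\binom{n}{s-1}$. Each block weakly saturates on its own, because the copies of $H$ created while saturating a block live inside that block, and adding edges elsewhere never destroys these copies. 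So after running all the block processes, the current hypergraph contains the complete hypergraph $K^r$ on every block.

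\textbf{Finishing the saturation.} It remains to add the edges $e\in E(K_n^r)$ that lie in no block, possibly after adding $o(n^{s-1})$ further edges to $F$ at the start. Here the sparseness is used. Fix $S\subseteq U\in E(H)$ with $|S|=s-1$ (notation of Definition~\ref{def:sparseness}), chosen so that $U$ is the only edge of $H$ containing $S$. Given a missing edge $e$, pick $S'\subseteq e$ with $|S'|=s-1$, and a block $B\supseteq S'$ (where everything is already present). I would embed $H$ so that $U\mapsto e$, $S\mapsto S'$, and all vertices of $V(H)\setminus U$ go into $B$ (if $B$ is too small, enlarge $m$ relative to $|V(H)|$). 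Every edge of $H$ other than $U$ then has to be present. Edges contained in $B$ are present. The problematic ones are edges meeting $e\setminus S'$ that do not contain $S'$: they use at most $s-2$ vertices of $S'$, together with vertices of $e\setminus B$ and of $B$.

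I would handle these by adding missing edges in increasing order of $|e\setminus B|$, i.e.\ the number of vertices outside a common block. Edges of $H$ other than $U$ meet $U$ in a set not containing $S$, so in the image their intersection with $e\setminus B$ is, after re-choosing the block, hopefully of smaller ``outside'' size. The base cases would be supplied by a small auxiliary set of edges, namely a Tuza-type construction of size $O(n^{s-1})$ concentrated on $o(n)$ vertices, or equivalently of size $\epsilon\binom{n}{s-1}$.

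\textbf{Main obstacle.} I expect this last step to be the hard part: the induction must close using only $o(n^{s-1})$ extra edges and the sparseness property, without assuming any structure of $H$ beyond $s(H)=s$. If the direct induction fails, my fallback is to take the blocks from a R\"odl covering whose blocks intersect in fewer than $s-1$ vertices, and to invoke Tuza's upper-bound construction (the one behind Theorem~\ref{thrm:tuza-theta-wsat}) on a set of $\epsilon n$ vertices, so that every missing edge has an $(s-1)$-subset inside a saturated region.
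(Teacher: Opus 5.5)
Your first step is sound: R\"odl blocks of size $m$, an optimal $F_m$ on each, and each block saturating on its own. The finishing step, which you yourself flag as the main obstacle, is a genuine gap, and the induction you sketch cannot close it.

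First, the sparseness set is misused. By Definition~\ref{def:sparseness} the set lying in a unique edge has size $s$, and no $(s-1)$-set of $H$ lies in exactly one edge (Proposition~\ref{prop:sparseness-and-delta} gives $\delta_{s-1}(H)\ge 2$). Moreover, in the proof of Proposition~\ref{prop:short-tuza-final-building-step} this $s$-set is embedded into the part of the new edge lying \emph{outside} the already complete region. That placement is what forces every other edge of the copy to have strictly fewer outside vertices. Putting it inside $B$, as you do, gives no control over the other edges.

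More seriously, your blocks share no common structure. The base cases of your induction are then the $r$-sets $e$ that lie in no block but have $1\le \min_{B}|e\setminus B|\le s-1$. The sparseness argument cannot produce them, since it needs $s$ vertices of $e$ outside a complete region. There are also far too many of them to add as $o(n^{s-1})$ extra edges. For $r=s=2$, every edge not inside a block is of this type. For $H=K_3$, a R\"odl cover that is a partition of $[n]$ gives disjoint cliques, and no edge between two blocks can ever be added. Neither the ``hopefully smaller outside size'' step nor the fallback says how such edges are obtained.

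The missing idea is a common anchor for all blocks. Fix $Z\subseteq[n]$ with $|Z|=|V(H)|$. Apply Theorem~\ref{thrm:rodl} to $X=[n]\setminus Z$ with $k=m-|Z|$ and $t=s-1$. Then place a copy of an optimal $F_m\in\Wsat(m,H)$ on $Z\cup W$ for each block $W$.

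Once every $Z\cup W$ is complete, every $r$-set $e$ with $|e\setminus Z|\le s-1$ is present: its trace on $X$ has at most $s-1$ elements and so lies in some $W$. Proposition~\ref{prop:short-tuza-final-building-step} then adds everything else, with no extra edges and no further induction. This gives $\wsat(n,H)\le(1+\epsilon)\,\wsat(m,H)\binom{n-|Z|}{s-1}/\binom{m-|Z|}{s-1}$. So you should either normalise by $\binom{n-|V(H)|}{s-1}$, or use that $\binom{m}{s-1}/\binom{m-|Z|}{s-1}\to 1$ along your $\liminf$ sequence.

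The paper only cites this theorem from~\cite{2023Shapira}, but it uses exactly this anchored construction in Appendix~\ref{appendix:rhosat} for $\rhosat$ and in Proposition~\ref{prop:upper-bound-via-G-hyper}. Finally, the case $s=1$ is not quite trivial. It follows from the monotonicity $\wsat(n+1,H)\le\wsat(n,H)$ established in the proof of Proposition~\ref{prop:wsat-behavior-s-1}.
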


An important property of sparseness is given by the following statement from~\cite{1992Tuza}, which helps in constructing weakly $H$-saturated hypergraphs.

\begin{proposition}[\cite{1992Tuza}]\label{prop:short-tuza-final-building-step}
    Let $r \ge 1$ and $s \ge 1$ be integers. Let $H$ be an $r$-uniform hypergraph with $s(H) = s$. Let $F$ be an $r$-uniform hypergraph with a subset $Z \subseteq V(F)$ such that $|Z| \ge |V(H)| - s$ and $F$ contains all edges $e$ such that $|e \setminus Z| \le s - 1$. Then $F$ is weakly $H$-saturated.
\end{proposition}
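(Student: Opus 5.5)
We prove Proposition~\ref{prop:short-tuza-final-building-step} by exhibiting an explicit order in which the missing edges are added, so that each addition creates a new copy of $H$. The plan is an induction on $t=|e\setminus Z|$. Every edge $e$ with $t\le s-1$ already lies in $F$, so we add the missing edges in order of increasing $t=s,s+1,\ldots,r$, and within a fixed $t$ in any order. Let $F'$ be the current hypergraph when $e$ is added. By induction, $F'$ contains every $r$-set $e'$ with $|e'\setminus Z|<t$, together with all previously added sets having the same value of $t$.

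Fix a missing edge $e$ with $|e\setminus Z|=t\ge s$. By Definition~\ref{def:sparseness} choose $S\seq V(H)$ with $|S|=s$ and a unique edge $U\in E(H)$ with $S\seq U$. (A minimal such $S$ has size exactly $s$; note $s\le r$.) Pick $T\seq e\setminus Z$ with $|T|=s$. We embed $H$ into $Z\cup e$ by an injection $\phi$ with the following properties:
\begin{itemize}
\item[(a)] $\phi(U)=e$ and $\phi(S)=T$;
\item[(b)] $\phi(U\setminus S)=e\setminus T$;
\item[(c)] $\phi$ maps $V(H)\setminus U$ into $Z\setminus e$.
\end{itemize}
This is possible as long as $|Z\setminus e|\ge |V(H)|-r$. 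We have $|e\cap Z|=r-t\le r-s$, so $|Z\setminus e|\ge |V(H)|-s-(r-s)=|V(H)|-r$, which is exactly enough.

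Next we check that every other edge $U'\ne U$ of $H$ maps into an edge already present. By uniqueness of $U$ we have $S\not\seq U'$, so $\phi(U')$ misses some vertex of $T$. The vertices of $\phi(U')$ outside $Z$ all lie in $e\setminus Z$, because $\phi(V(H)\setminus U)\seq Z$. Hence
\[|\phi(U')\setminus Z|\le |\phi(U')\cap (e\setminus Z)|.\]
This intersection lies in $(e\setminus Z)$ with at least one vertex of $T$ removed, so $|\phi(U')\setminus Z|\le t-1$. By induction $\phi(U')\in F'$.

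Adding $e=\phi(U)$ therefore completes a copy of $H$, and this copy is new because it contains $e$. The only delicate points are the counting that guarantees $Z\setminus e$ is large enough, and arranging that $T\seq e\setminus Z$, so that missing a vertex of $T$ genuinely lowers $|\cdot\setminus Z|$. Both are handled above. The empty-$H$ case does not arise, since $s\ge1$.
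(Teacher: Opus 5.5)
Your proof is correct and follows the paper's argument. You add the missing edges in order of increasing $|e\setminus Z|$ and embed $H$ so that the unique edge $U\supseteq S$ goes to $e$, $S$ goes into $e\setminus Z$, and the rest of $V(H)$ goes into $Z$; every other edge then misses a vertex of $\phi(S)$ and so has strictly smaller $|\cdot\setminus Z|$. You also explicitly verify the count $|Z\setminus e|\ge |V(H)|-r$ needed for the embedding, which the paper leaves implicit.
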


\begin{proof}
    For completeness, we provide the proof.

    We add the missing edges into $F$ in order of increasing $|e \setminus Z|$.

    Suppose we are currently adding an edge $e \in \binom{V(F)}{r}$. Let $\wt{F}$ denote the hypergraph $F$ with the already added edges. By the definition of sparseness, there exists a set $S \in \binom{V(H)}{s}$ such that there is a unique edge $U \in E(H)$ with $S \subseteq U$. Take in $\wt{F} \cup \{e\}$ a copy $\wt{H}$ of the hypergraph $H$ such that $V(\wt{H}) \setminus e$ is a subset of $Z$, the edge $e$ is the preimage of edge $U$, and the preimage $\wt{S}$ of the set $S$ is a subset of $e \setminus Z$. This is possible since $|e \setminus Z| \ge s$. Then, for all edges $W \in E(\wt{H}) \setminus \{e\}$, we have $|W \cap \wt{S}| < |\wt{S}|$, hence $|W \setminus Z| < |e \setminus Z|$. Thus, $W$ indeed belongs to $E(\wt{F})$, and the edge $e$ creates a new copy of the hypergraph $H$.
\end{proof}

\subsection{Weak saturation for family of hypergraphs}

The definition of weak saturation naturally generalizes to the case of a family of hypergraphs.

\begin{definition}
    Let $r \ge 1$ be an integer and let $\mc{H}$ be a non-empty family of non-empty $r$-uniform hypergraphs. We say that an $r$-uniform hypergraph $F$ on $n$ vertices is \emph{weakly \mbox{$\mc{H}$-saturated}} if it is possible to arrange the edges $E(K_n^r) \setminus E(F)$ in an order $e_1, \ldots, e_k$ such that for every $i$ there exists a hypergraph $H \in \mc{H}$ for which adding $e_i$ to the hypergraph $F \cup \{e_1, \ldots, e_{i-1}\}$ creates a new copy of $H$. In this case, we write $F \in \Wsat(n, \mc{H})$. The minimum number of edges in such a hypergraph $F$ is denoted $\wsat(n, \mc{H})$ and is called the \emph{weak saturation number} of $\mc{H}$ in $K_n^r$
\end{definition}

Weak saturation numbers for families of hypergraphs are closely connected to weak saturation numbers for the \emph{disjoint union} of hypergraphs.
\begin{definition}
    Let $r \ge 1$ and $m \ge 1$ be integers, and let $\mc{H} = \{H_i\}_{i \in [m]}$ be a family non-empty $r$-uniform hypergraphs. We define $\bigsqcup_{i \in [m]} H_i$ to be the hypergraph $G$ with vertex set $\bigcup_{i \in [m]} V(H_i) \times \{i\}$ and edge set
    \[
        E(G) = \bigcup_{i \in [m]} \{\, e \times \{i\} \mid e \in E(H_i) \,\}.
    \]
\end{definition}

\begin{proposition}\label{prop:wsat-family-equal-wsat-disjoint}
    Let $r \ge 1$ be an integer and let $\mc{H}$ be a non-empty family of non-empty $r$-uniform hypergraphs. Then
    \[
        \wsat(n, \mc{H}) = \wsat\bigl(n, \bigsqcup_{H \in \mc{H}} H\bigr) + \Theta(1).
    \]
\end{proposition}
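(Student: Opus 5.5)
Since the disjoint union is a finite hypergraph, I take $\mc{H}=\{H_1,\dots,H_m\}$ to be finite. Write $G=\bigsqcup_{i\in[m]}H_i$. The plan is to prove two inequalities:
\[
\wsat(n,\mc{H})\le \wsat(n,G)
\qquad\text{and}\qquad
\wsat(n,G)\le \wsat(n,\mc{H})+\binom{c}{r},
\]
where $c$ is a constant depending only on $\mc{H}$. Together these give that the difference is bounded, which is the claimed $\Theta(1)$ statement.

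\textbf{First inequality.} Take $F\in\Wsat(n,G)$ with its ordering $e_1,\dots,e_k$. When $e_j$ is added, a new copy $\wt{G}$ of $G$ appears, so $\wt{G}$ contains $e_j$. Every edge of $G$ lies in a single component $H_i\times\{i\}$. Hence $e_j$ lies in the image of one component $H_i$. That image is a copy of $H_i$ in $F\cup\{e_1,\dots,e_j\}$ that uses $e_j$, so it is a new copy of $H_i$. Therefore the same ordering witnesses $F\in\Wsat(n,\mc{H})$.

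\textbf{Second inequality.} Set $c=|V(G)|+\max_i|V(H_i)|$ and assume $n\ge c$; finitely many small $n$ only affect constants. Take $F\in\Wsat(n,\mc{H})$ with ordering $e_1,\dots,e_k$. Fix a set $A$ of $c$ vertices and let $F'=F\cup\binom{A}{r}$, so $|E(F')|\le|E(F)|+\binom{c}{r}$.

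Add the edges of $E(K_n^r)\setminus E(F')$ in the order inherited from $e_1,\dots,e_k$, skipping those already present. Consider the step where $e_j$ is added. Its predecessors in the original order are all present, since edges are only ever added, so the hypergraph contains $F\cup\{e_1,\dots,e_{j-1}\}$. It therefore contains the copy $\wt{H}_i$ of some $H_i$ that uses $e_j$.

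This copy occupies at most $|V(H_i)|$ vertices. So at least $|V(G)|$ vertices of $A$ lie outside $V(\wt{H}_i)$. On these vertices I embed the remaining components $H_{i'}$, $i'\ne i$, pairwise disjointly. All their edges are present because $A$ spans a complete hypergraph in $F'$. Together with $\wt{H}_i$ this gives a copy of $G$ that contains $e_j$, hence a new copy of $G$. Thus $F'\in\Wsat(n,G)$.

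The only delicate point is this disjoint placement of the other components, which is exactly what the choice of $|A|$ handles. Everything else is bookkeeping.
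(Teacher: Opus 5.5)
Your proof is correct and follows the paper's argument: the inclusion $\Wsat(n,G)\subseteq\Wsat(n,\mc{H})$ gives one direction, and adding a complete hypergraph on a fixed constant-size vertex set, which hosts the remaining components, gives the other. The only difference is your choice $|A| = |V(G)| + \max_i |V(H_i)|$; the paper takes $|A| = |V(G)|$, which already suffices because $|A \setminus V(\wt{H}_i)| \ge |V(G)| - |V(H_i)|$.
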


\begin{proof}
    Let $G = \bigsqcup_{H \in \mc{H}} H$.

    The bound $\wsat(n, \mc{H}) \le \wsat(n, G)$ is immediate from the fact that $\Wsat(n, G) \subseteq \Wsat(n, \mc{H})$.

    We now prove the bound in the opposite direction. Fix $n \ge |V(G)|$ and take a hypergraph $F \in \Wsat(n, \mc{H})$ such that $|E(F)| = \wsat(n, \mc{H})$. Fix a subset $Z \subseteq V(F)$ of size $|V(G)|$. Let $F'$ be the hypergraph on the vertex set $V(F) = V(F')$ with edge set
    \[
        E(F') = E(F) \cup \binom{Z}{r}.
    \]
    Then $F'$ is weakly $G$-saturated, since the missing edges can be added in the same order as in $F$, and if at a given step a copy of some $H \in \mc{H}$ is created in $F$, then in $F'$ one can form a copy of $G$ by taking an arbitrary subset of $Z \setminus V(H)$ as the preimage of $V(G) \setminus V(H)$.

    Therefore,
    \[
        \wsat(n, G) \le \wsat(n, \mc{H}) + \binom{|V(G)|}{r}.
    \]
\end{proof}

\subsection{Degenerate cases}

In the case $r = 1$, the following explicit expression for $\wsat(n,\mc{H})$ is straightforward to obtain.

\begin{proposition}\label{prop:wsat-behavior-r-1}
    Let $\mc{H}$ be a non-empty family of $1$-uniform hypergraphs. Then for all $n \ge \max\{\, |V(H)| \mid H \in \mc{H} \,\}$,
    \[
        \wsat(n, \mc{H}) = \min\{\, |E(H)| \mid H \in \mc{H} \,\} - 1.
    \]
\end{proposition}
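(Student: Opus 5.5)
The plan is to prove the two inequalities separately. Let $m=\min\{|E(H)| \mid H\in\mc{H}\}$ and $N=\max\{|V(H)| \mid H\in\mc{H}\}$, and fix $n\ge N$. In the $1$-uniform setting an edge is a singleton $\{v\}$, so a $1$-uniform hypergraph $F$ on $n$ vertices is just a subset of $V(F)$. A copy of $H$ is an injection of $V(H)$ into $V(F)$ that sends edges to edges. Isolated vertices of $H$ can be placed anywhere, so such a copy exists exactly when $F$ has at least $|E(H)|$ edges and $n\ge |V(H)|$. Both directions reduce to counting edges.

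\textbf{Upper bound.} Let $F$ consist of any $m-1$ singletons; this is possible because $m-1\le |V(H_0)|-1<n$, where $H_0\in\mc{H}$ attains $m$. Add the missing singletons in any order. When a singleton $e$ is added to the current hypergraph $F'$, which already has at least $m-1$ edges, choose a copy of $H_0$ in $F'\cup\{e\}$ as follows: send one edge of $H_0$ to $e$, send the remaining $m-1$ edges of $H_0$ to edges of $F'$, and send the isolated vertices of $H_0$ to distinct unused vertices. There are enough unused vertices since $n\ge |V(H_0)|$. This copy uses $e$, so it is new. Hence $\wsat(n,\mc{H})\le m-1$.

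\textbf{Lower bound.} Suppose $F\in\Wsat(n,\mc{H})$ has $|E(F)|\le m-2$. If $F$ is already complete, then $n\le m-2$, which contradicts $n\ge N\ge m$. Otherwise, consider the first added edge $e_1$. The hypergraph $F\cup\{e_1\}$ has at most $m-1$ edges, so it contains no copy of any $H\in\mc{H}$, since every such $H$ has at least $m$ edges. This contradiction gives $\wsat(n,\mc{H})\ge m-1$.

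The only point needing care is the degenerate situation $m=1$ (and hence the possibility $F$ empty). The argument above handles it: with $F=\emptyset$, each added singleton immediately creates a copy of a one-edge $H_0$, again using $n\ge |V(H_0)|$ to place its isolated vertices. I do not expect any step to be a real obstacle; the main thing to check is that the hypothesis $n\ge N$ supplies enough room both for the isolated vertices and for the $m-1$ initial edges.
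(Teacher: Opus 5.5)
Your proof is correct and complete. The paper gives no proof of this statement and only calls it straightforward; your two-sided edge count is the natural argument. For the upper bound you take any $m-1$ singletons and always use a fixed minimizer $H_0$. For the lower bound you use the first added edge, which is the same argument as in the proof of Proposition~\ref{prop:lower-bound-via-delta-m} applied to the link of $\emptyset$, where $L_H(\emptyset)=E(H)$. You also correctly identify where the hypothesis $n\ge\max|V(H)|$ is needed: room for the isolated vertices, $m-1\le |V(H_0)|-1<n$, and ruling out a complete $F$ with at most $m-2$ edges.
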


In the case $s(H) = 1$, the behavior of $\wsat(n, H)$ can be described as follows.

\begin{proposition}\label{prop:wsat-behavior-s-1}
    Let $r \ge 1$ be an integer and let $\mc{H}$ be a non-empty family of $r$-uniform hypergraphs such that $\min\{\, s(H) \mid H \in \mc{H} \,\} = 1$. Then there exists an integer $C_{\mc{H}}$ such that
    \[
        C_{\mc{H}} = \lim_{n \to \infty} \wsat(n, \mc{H}).
    \]
    Moreover, for all $n \ge \max\{\, |V(H)| \mid H \in \mc{H} \,\}$,
    \[
        \wsat(n, \mc{H}) \ge C_{\mc{H}}.
    \]
\end{proposition}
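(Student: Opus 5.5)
The idea is to show that, for $n$ at least $n_0:=\max\{|V(H)|\mid H\in\mc{H}\}$, the sequence $\wsat(n,\mc{H})$ is non-decreasing and bounded. A bounded non-decreasing sequence of non-negative integers is eventually constant. Its limit $C_{\mc{H}}$ is then an integer, and it is at least every term with $n\ge n_0$, which gives both claims at once.

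\emph{Boundedness.} Fix $H\in\mc{H}$ with $s(H)=1$. If $n\ge |V(H)|$, pick $Z\seq[n]$ with $|Z|=|V(H)|-1$ and let $F$ consist of the edges $e$ with $|e\setminus Z|=0$, that is, $F=\binom{Z}{r}$. By Proposition~\ref{prop:short-tuza-final-building-step}, $F$ is weakly $H$-saturated, and hence weakly $\mc{H}$-saturated. Therefore $\wsat(n,\mc{H})\le\binom{|V(H)|-1}{r}$ for all $n\ge n_0$.

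\emph{Monotonicity.} We show $\wsat(n+1,\mc{H})\ge\wsat(n,\mc{H})$ for $n\ge n_0$ by restriction. Take $F\in\Wsat(n+1,\mc{H})$ of minimum size and a vertex $v$. Define $F'=F-v$ on the $n$ remaining vertices. The goal is a hypergraph on $n$ vertices, weakly $\mc{H}$-saturated, with at most $|E(F)|$ edges. The natural attempt is to reuse the saturating order of $F$ restricted to edges avoiding $v$, but copies of $H$ created in $F$ may pass through $v$. The standard fix is to replace $v$ by a vertex $u\ne v$ and map $v\mapsto u$, i.e.\ take $F''=\{\phi(e)\mid e\in E(F),\ \phi(e)\text{ still an $r$-set}\}$ with $\phi$ identifying $v$ with $u$. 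This does not increase the edge count. The difficulty is that copies of $H$ through both $u$ and $v$ collapse under $\phi$.

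Because of this obstacle, I would expect to instead choose $v$ so that it is not needed. Take $v$ of minimal degree in $F$, or argue via a cleaner route. Since $\wsat$ values are integers and bounded, it suffices to prove $\wsat(n,\mc{H})\ge\liminf_m\wsat(m,\mc{H})$ directly: show that any $F\in\Wsat(n,\mc{H})$ extends to a member of $\Wsat(m,\mc{H})$ for all $m\ge n$ with the same number of edges. Given $F$ on $[n]$, add $m-n$ isolated vertices. The missing edges entirely inside $[n]$ are added as before. For an edge $e$ meeting the new vertices, I would add edges in order of increasing $|e\setminus[n]|$. Using $s(H)=1$, there is a vertex $x\in V(H)$ lying in exactly one edge $U$ of $H$. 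Embed $H$ with $x$ mapped to a new vertex of $e$, $U$ mapped to $e$, and all other vertices of $H$ mapped into $[n]$; this is possible because $n\ge|V(H)|$. Every other edge of the copy then lies in $[n]$, where it is already present. This is exactly the argument of Proposition~\ref{prop:short-tuza-final-building-step} with $Z=[n]$ and $s=1$, needing $|Z|\ge|V(H)|-1$. Hence $\wsat(m,\mc{H})\le\wsat(n,\mc{H})$ for $m\ge n\ge n_0$: the sequence is non-increasing, not non-decreasing.

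So the correct structure is: the sequence is non-increasing and non-negative, hence eventually constant, equal to an integer $C_{\mc{H}}$. For every $n\ge n_0$ we then have $\wsat(n,\mc{H})\ge C_{\mc{H}}$, since the terms decrease to their limit. The only real step is the extension argument above, whose key point is that each edge containing new vertices closes a copy of $H$ whose remaining edges lie in $[n]$.
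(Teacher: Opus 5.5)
Your final argument matches the paper's proof: extend a minimum $F\in\Wsat(n,\mc{H})$ by isolated vertices, finish with Proposition~\ref{prop:short-tuza-final-building-step} using $Z=[n]$ and an $H$ with $s(H)=1$, and conclude that the sequence is non-increasing, hence eventually equal to an integer $C_{\mc{H}}$ that bounds every term with $n\ge n_0$ from below. The paper adds one vertex at a time and cites Theorem~\ref{thrm:tuza-theta-wsat} where you use non-negativity; you should delete the abandoned opening claim that the sequence is non-decreasing, and note that when $|e\setminus[n]|\ge 2$ the other edges of the embedded copy need not lie in $[n]$ — they only have fewer new vertices than $e$ — which is exactly why the ordering by $|e\setminus[n]|$ in the cited proposition is needed.
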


\begin{proof}
    By Proposition~\ref{thrm:tuza-theta-wsat}, it suffices to show that for all $n \ge \max\{\, |V(H)| \mid H \in \mc{H} \,\}$,
    \[
        \wsat(n+1, \mc{H}) \le \wsat(n, \mc{H}).
    \]

    Fix $n$ and a hypergraph $F \in \Wsat(n, \mc{H})$. Let $x \notin V(F)$ be a new vertex. Consider the hypergraph $\wt{F}$ on the vertex set $V(F) \cup \{x\}$ with edge set $E(\wt{F}) = E(F)$. We show that $\wt{F}$ is weakly $\mc{H}$-saturated. The edges is be added in the following order.
    First, add all the edges in $F$. Then, apply Proposition~\ref{prop:short-tuza-final-building-step} with $Z = V(F)$, choosing any hypergraph $H \in \mc{H}$ with $s(H) = 1$.
\end{proof}

\subsection{Trivial lower bounds}

The following quantities are generalizations of $\delta^*(H)$.
\begin{definition}
    Let $r \ge 1$ and $0 \le m \le r$ be integers, and let $H$ be an $r$-uniform hypergraph.

    For a non-empty $H$, define
    \[
        \delta_{m}(H) = \min\left\{\, |L_H(U)|\ \Big|\ U \in \binom{V(H)}{m},\ |L_H(U)| \neq 0 \,\right\}.
    \]

    For a empty $H$, set $\delta_{m}(H)=-1$.
\end{definition}

It is easy to see that $\delta_{r-1}(H) = \delta^*(H)$. There is also the following connection with sparseness.

\begin{proposition}\label{prop:sparseness-and-delta}
    Let $r \ge 1$ be an integer and let $H$ be a non-empty $r$-uniform hypergraph. Then
    \[
        s(H) = \min\left\{\, m \mid 0 \le m \le r,\ \delta_{m}(H) = 1 \,\right\}.
    \]
\end{proposition}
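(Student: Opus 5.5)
The proof compares the two definitions directly. A set $S$ of size $m$ lies in exactly one edge of $H$ precisely when $|L_H(S)| = 1$, and this is possible only for $m \le r$, since a set lies in an edge only if its size is at most $r$. The sparseness $s(H)$ is therefore the smallest $m \in \{0,\dots,r\}$ for which some $U \in \binom{V(H)}{m}$ has $|L_H(U)| = 1$. It remains to check that this condition is equivalent to $\delta_m(H)=1$.

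If $\delta_m(H) = 1$, then the minimum defining $\delta_m(H)$ is attained by some $U \in \binom{V(H)}{m}$ with $|L_H(U)| = 1$. Conversely, suppose some $U$ of size $m$ has $|L_H(U)| = 1$. Then the set of positive values $|L_H(U)|$ over $U \in \binom{V(H)}{m}$ is nonempty. Every such value is a positive integer, so the minimum is at least $1$, and it equals $1$ because the value $1$ occurs. Hence $\delta_m(H) = 1$.

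We must also check that the minimum on the right-hand side is taken over a nonempty set. Since $H$ is non-empty, pick an edge $e$; then $U = e$ with $m = r$ satisfies $L_H(e) = \{\emptyset\}$, so $|L_H(e)| = 1$ and $\delta_r(H) = 1$. The same fact shows that $s(H)$ is well defined. With the equivalence above, the two minima range over the same set of $m$ and therefore coincide.

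No step here is a real obstacle. The points that need care are the convention that $L_H(U)$ for $U = e$ consists of the single empty set, and the case $m = 0$, where $L_H(\emptyset) = E(H)$. In that case the condition reads $|E(H)| = 1$, which agrees with the definition of sparseness applied to $S = \emptyset$.
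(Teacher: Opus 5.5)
Your proof is correct and follows the same approach as the paper. Both reduce the statement to the observation that $\delta_m(H)=1$ holds exactly when some $m$-subset of $V(H)$ lies in a unique edge, and then read the minimum off the definition of $s(H)$. Your extra checks (nonemptiness of the index set via $m=r$, and the $m=0$ case) are details the paper leaves implicit.
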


\begin{proof}
    For every $0 \le m \le r$, the condition $\delta_{m}(H) = 1$ means that there exists a subset $S \subseteq V(H)$ of size $m$ such that $\lbr{L_H(S)} = 1$, i.e., there exists exactly one edge $U \in E(H)$ containing $S$. The statement follows directly from the definition of $s(H)$.
\end{proof}

The following general lower bound holds.

\begin{proposition}\label{prop:lower-bound-via-delta-m}
    Let $r \ge 1$, $\delta\ge 1$ and $0 \le m \le r$ be integers. Let $H$ be an $r$-uniform hypergraph with $\delta_m(H)=\delta$. Then for all $n \ge |V(H)|$,
    \[
        \wsat(n, H) \ge \frac{\delta - 1}{\binom{r}{m}} \binom{n}{m}.
    \]
\end{proposition}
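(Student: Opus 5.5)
The plan is a counting argument on $m$-subsets. Fix $F \in \Wsat(n,H)$ and an ordering $e_1,\dots,e_k$ of the missing edges as in the definition. The idea is to show that every $m$-subset $W \seq [n]$ with $|L_{K_n^r}(W)|\neq 0$ (that is, every $m$-set, since $m\le r\le n$) is contained in at least $\delta-1$ edges of $F$. Double counting the pairs $(W,e)$ with $W \seq e \in E(F)$ and $|W|=m$ then gives
\[
\binom{r}{m}\,|E(F)| \;\ge\; (\delta-1)\binom{n}{m},
\]
which is the claimed bound.

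To prove that every $W\in\binom{[n]}{m}$ lies in at least $\delta-1$ edges of $F$, I would argue by contradiction. Suppose $W$ lies in at most $\delta-2$ edges of $F$. Because $n\ge |V(H)|\ge r$, some edge of $K_n^r$ contains $W$. The edges of $K_n^r$ containing $W$ cannot all belong to $F$: there are $\binom{n-m}{r-m}$ of them, and I expect this to be at least $\delta-1$ because $\delta_m(H)=\delta$ forces $\binom{|V(H)|-m}{r-m}\ge\delta$. (If $\delta=1$ the claim is trivial.) So some missing edge contains $W$. Let $e_i$ be the first edge in the order with $W\seq e_i$.

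Adding $e_i$ creates a new copy $\wt H$ of $H$ that contains $e_i$. Let $\wt W$ be $W$ viewed inside $\wt H$, with preimage $U\in\binom{V(H)}{m}$ under the embedding. The link $L_H(U)$ is nonempty, since $W\seq e_i$ and $e_i$ is an edge of $\wt H$. Hence $|L_H(U)|\ge\delta$, so $\wt H$ has at least $\delta$ edges containing $W$. One of them is $e_i$; the other $\ge \delta-1$ edges belong to $F\cup\{e_1,\dots,e_{i-1}\}$. By the minimality of $i$, none of $e_1,\dots,e_{i-1}$ contains $W$, so all these $\ge\delta-1$ edges lie in $F$. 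This contradicts the assumption, so every $m$-set lies in at least $\delta-1$ edges of $F$.

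The only delicate point is the existence of a missing edge containing $W$ when $W$ has few $F$-edges. This is handled by the counting remark above, and the case $m=0$ is consistent with the same argument. The double counting step is then routine.
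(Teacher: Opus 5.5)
Your proposal is correct and follows essentially the same argument as the paper. You show that every $m$-set lies in at least $\delta-1$ edges of $F$, using the first added edge containing it (the case where all such edges are already in $F$ is covered by $\binom{n-m}{r-m}\ge\binom{|V(H)|-m}{r-m}\ge\delta$), and then double count; phrasing it as a contradiction instead of the paper's direct case split is only cosmetic.
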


\begin{proof}
    Fix a hypergraph $F \in \Wsat(n, H)$ and a subset $U \in \binom{V(F)}{m}$. We want to show that $U$ is contained in at least $\delta - 1$ edges of $F$, i.e., $|L_F(U)| \ge \delta - 1$.

    If all edges containing $U$ are already present in $F$, then
    \[
        |L_F(U)| \ge \binom{n - m}{r - m} \ge \binom{|V(H)| - m}{r - m} \ge \delta.
    \]
    Otherwise, there exists some missing edge in $F$ that contains $U$. Take the first such edge $e$ that is added and contains $U$. Suppose it creates a copy $\wt{H}$ of the hypergraph $H$, then $L_{\wt{H}}(U) \setminus \{e \setminus U\}$ is a subset of $L_F(U)$, since $e$ is the first added edge containing $U$. Hence,
    \[
        |L_F(U)| \ge |L_{\wt{H}}(U)| - 1 = \delta - 1.
    \]

    The required lower bound on $|E(F)|$ follows from double counting:
    \[
        \binom{r}{m} \cdot |E(F)| = \sum_{U \in \binom{V(F)}{m}} |L_F(U)| \ge (\delta - 1) \cdot \binom{n}{m}.
    \]
\end{proof}

\section{Lower bound via polymatroids}\label{sec:linear-algebra-method}

\subsection{Matroids}

To formulate Kalai's linear algebraic method in full generality, we use matroids, which abstract the notion of linear independence. Matroids have many equivalent definitions; we use the definition via the rank function. Basic definitions and facts about matroids can be found in~\cite{2011Oxley}.

\begin{definition}
    Let $E$ be a finite set. A \emph{matroid} $M$ on $E$ is a pair $(E, \rk_M)$, where \mbox{$\rk_M : \pwrs{E} \to \mathbb{Z}$} is a function satisfying the following properties:
    \begin{gather*}
        \forall A \subseteq E\quad 0 \le \rk_M(A) \le |A|;\\
        \forall A \subseteq B \subseteq E\quad \rk_M(A) \le \rk_M(B);\\
        \forall A, B \subseteq E\quad \rk_M(A \cup B) + \rk_M(A \cap B) \le \rk_M(A) + \rk_M(B).
    \end{gather*}

    The rank of the matroid is defined as
    \[
        \rk(M) = \rk_M(E).
    \]
\end{definition}

\begin{remark*}
    In what follows, we consider matroids on the set of edges of the complete $r$-uniform hypergraph, i.e., $E = E(K_n^r)$. Therefore, for convenience, for a hypergraph $G \subseteq K_n^r$, we use $\rk_M(G)$ as a synonym for $\rk_M(E(G))$.
\end{remark*}

\begin{definition}
    Let $M = (E, \rk_M)$ be a matroid.

    A set $A \subseteq E$ is said to be \emph{independent} in $M$ if
    \[
        \rk_M(A) = |A|.
    \]

    A set $C \subseteq E$ is called a \emph{circuit} of $M$ if $\rk_M(C) < |C|$ and for every $e \in C$, the set $C \setminus \{e\}$ is independent.
\end{definition}

The following standard fact relates the circuits of a matroid to its rank.
\begin{proposition}\label{prop:matroid-rank-via-circuits}
    Let $M = (E, \rk_M)$ be a matroid. Then for every non-empty set $A \subseteq E$,
    \[
        \rk_M(A) = \max_{B \subseteq A} \left\{\, |B| \ \Big|\, \nexists\text{ circuit } C \subseteq B \,\right\}.
    \]
\end{proposition}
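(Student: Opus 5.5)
We need to prove Proposition~\ref{prop:matroid-rank-via-circuits}: for a matroid $M=(E,\rk_M)$ and a set $A\seq E$, the rank $\rk_M(A)$ equals the largest size of a subset $B\seq A$ containing no circuit. The plan is to show two things. First, a set contains no circuit if and only if it is independent. Second, every set $A$ has an independent subset of size exactly $\rk_M(A)$, while no independent subset of $A$ is larger. Everything will be derived directly from the three rank axioms.

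\textbf{Step 1: independence is hereditary, and circuit-free equals independent.} Suppose $B$ is independent and $B'\seq B$. Submodularity applied to $B'$ and $B\setminus B'$ gives
\[
|B| = \rk_M(B) \le \rk_M(B') + \rk_M(B\setminus B') \le \rk_M(B') + |B\setminus B'| .
\]
Hence $\rk_M(B')\ge |B'|$, and with the upper bound $\rk_M(B')\le|B'|$ we get that $B'$ is independent. A circuit is by definition dependent, so an independent set contains no circuit.

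Conversely, let $B$ be dependent. Take a minimal dependent subset $C\seq B$; it exists because $B$ is finite and dependent. Every proper subset of $C$ is independent, in particular each $C\setminus\{e\}$. So $C$ is a circuit, and $B$ contains a circuit. Thus the right-hand side of the proposition is the maximum size of an independent subset of $A$.

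\textbf{Step 2: the maximum size of an independent subset of $A$ is $\rk_M(A)$.} The upper bound is easy. If $B\seq A$ is independent, monotonicity gives $|B|=\rk_M(B)\le\rk_M(A)$.

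For the lower bound, let $B\seq A$ be a maximal independent subset; the empty set is independent, so one exists. I will show $\rk_M(B)=\rk_M(A)$. Take any $e\in A\setminus B$. The set $B\cup\{e\}$ is dependent, so $\rk_M(B\cup\{e\})\le |B|=\rk_M(B)$. By monotonicity this means $\rk_M(B\cup\{e\})=\rk_M(B)$.

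Next comes the standard fact: if $\rk_M(B\cup\{e\})=\rk_M(B)$ for every $e$ in a set $X$, then $\rk_M(B\cup X)=\rk_M(B)$. I prove this by induction on $|X|$. Write $X=X'\cup\{e\}$ and apply submodularity to $B\cup X'$ and $B\cup\{e\}$; their intersection contains $B$. This gives
\[
\rk_M(B\cup X)\le \rk_M(B\cup X')+\rk_M(B\cup\{e\})-\rk_M\bigl((B\cup X')\cap(B\cup\{e\})\bigr)\le \rk_M(B)+\rk_M(B)-\rk_M(B)=\rk_M(B).
\]
The last inequality uses the induction hypothesis $\rk_M(B\cup X')=\rk_M(B)$, the assumption $\rk_M(B\cup\{e\})=\rk_M(B)$, and monotonicity for the intersection. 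Applying this with $X=A\setminus B$ gives $\rk_M(A)=\rk_M(B)=|B|$, which completes the proof.

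\textbf{Main obstacle.} The only non-routine step is the augmentation argument in Step~2, namely that adding elements which individually do not raise the rank also cannot raise it jointly. Submodularity handles this as shown above. The rest follows directly from the definitions.
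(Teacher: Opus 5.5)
Your proof is correct. The paper gives no proof here: it states the proposition as a ``standard fact'' and refers to Oxley for basic matroid theory. What you wrote is the standard derivation of that fact from the rank axioms. Independence is hereditary by submodularity, and the paper's circuits are exactly the minimal dependent sets. Elements that individually do not raise $\rk_M(B)$ do not raise it jointly, which is the usual lemma proved by the same induction you give. Together these show that a maximal independent subset of $A$ has size $\rk_M(A)$.

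One step should be made explicit: going from ``$B\cup\{e\}$ is dependent'' to $\rk_M(B\cup\{e\})\le |B|$. This uses that $\rk_M$ is integer-valued, which the paper's definition of a matroid guarantees. It is also exactly where your argument would fail for the real-valued rank functions of the $1$-polymatroids used later in the paper.
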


\subsection{Lower bound}

The idea of using matroids to obtain lower bounds on weak saturation numbers was formulated in full generality by Kalai~\cite{1985Kalai}. This bound use matroids that satisfy some compatibility conditions with the pattern hypergraph $H$.
\begin{definition}
    Let $r\ge 1$ and $n\ge 1$ be integers, and let $H$ be a non-empty $r$-uniform hypergraph.

    A matroid $M$ on the set $E(K_n^r)$ with rank function $\rk_M$ is called \emph{weakly $H$-saturated} if for every copy $\wt{H} \subseteq K_n^r$ of the hypergraph $H$, the following holds:
    \[
        \forall e \in E(\wt{H})\quad \rk_M(\wt{H} \setminus \{e\}) = \rk_M(\wt{H}).
    \]
\end{definition}

\begin{theorem}[\cite{1985Kalai}]\label{thrm:matroid-lower-bound}
    Let $r \ge 1$ and $n \ge 1$ be integers, and let $H$ be a non-empty $r$-uniform hypergraph. Let $M$ be a weakly $H$-saturated matroid on $E(K_n^r)$. Then
    \[
        \wsat(n, H) \ge \rk(M).
    \]
\end{theorem}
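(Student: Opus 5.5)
The plan is to track the rank of the growing hypergraph along a weak saturation process. Fix $F \in \Wsat(n,H)$ with $|E(F)| = \wsat(n,H)$, together with an ordering $e_1,\ldots,e_k$ of $E(K_n^r)\setminus E(F)$ witnessing weak saturation. Set $F_0 = F$ and $F_i = F \cup \fbr{e_1,\ldots,e_i}$, so that $F_k = K_n^r$. The goal is to show that the rank never increases along this process:
\[
\rk_M(F_i) = \rk_M(F_{i-1}) \quad \text{for every } i.
\]
Once this is established, we get $\rk(M) = \rk_M(F_k) = \rk_M(F_0) \le |E(F)|$, where the last inequality is the axiom $\rk_M(A)\le|A|$. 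This yields the theorem.

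To prove the equality at step $i$, use the fact that adding $e_i$ creates a new copy $\wt{H}$ of $H$. Here $e_i \in E(\wt{H})$ and $E(\wt{H})\setminus\fbr{e_i} \seq E(F_{i-1})$. Put $A = E(F_{i-1})$ and $B = E(\wt{H})$. Then $A\cup B = E(F_i)$ and $A\cap B = E(\wt{H})\setminus\fbr{e_i}$. Submodularity gives
\[
\rk_M(F_i) + \rk_M(\wt{H}\setminus\fbr{e_i}) \le \rk_M(F_{i-1}) + \rk_M(\wt{H}).
\]
Since $M$ is weakly $H$-saturated, $\rk_M(\wt{H}\setminus\fbr{e_i}) = \rk_M(\wt{H})$. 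Cancelling this term gives $\rk_M(F_i)\le \rk_M(F_{i-1})$, and monotonicity gives the reverse inequality.

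There is no real obstacle in this argument. The only point needing care is checking that $A\cap B$ is exactly $E(\wt{H})\setminus\fbr{e_i}$. This holds because $e_i\notin E(F_{i-1})$, while every other edge of the new copy already lies in $F_{i-1}$, by the definition of creating a copy upon adding $e_i$.

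Finally, the case $n<|V(H)|$ is handled separately. There $H$ has no copies, so no edge can be added; hence $F = K_n^r$, and the bound $\rk(M)\le |E(K_n^r)|$ is immediate from the first rank axiom.
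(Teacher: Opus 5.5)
Your proof is correct and follows the paper's argument. The paper cites this theorem from Kalai and uses the same submodularity-plus-monotonicity step, with $A=F\cup\fbr{e_1,\ldots,e_{i-1}}$ and $B=\wt{H}$, in its proof of the polymatroid version, Theorem~\ref{thrm:poly-lower-bound}, which it calls analogous. Your separate treatment of $n<|V(H)|$ is harmless but unnecessary, since that case is the general argument with $k=0$.
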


As mentioned in the introduction, it was shown in~\cite{2025TerekhovLinearAlgebra} that, in the graph case, the best bound on $\wsat(n,H)$ obtainable by this method has an integer asymptotic coefficient. This restriction can be avoided by working with $1$-polymatroids~\cite{2023Bonin} rather than matroids.

\begin{definition}
    Let $E$ be a finite set. An \emph{$1$-polymatroid} $M$ on $E$ is a pair $(E, \rho_M)$, where $\rho_M : \pwrs{E} \to \mathbb{R}$ is a function satisfying the following properties:
    \begin{gather}
        \forall A \subseteq E\quad 0 \le \rho_M(A) \le |A|;\label{eq:poly-boundness}\\
        \forall A \subseteq B \subseteq E\quad \rho_M(A) \le \rho_M(B);\label{eq:poly-monotone}\\
        \forall A, B \subseteq E\quad \rho_M(A \cup B) + \rho_M(A \cap B) \le \rho_M(A) + \rho_M(B).\label{eq:poly-submodular}
    \end{gather}
\end{definition}

\begin{definition}
    Let $r\ge 1$ and $n\ge 1$ be integers, and let $H$ be a non-empty $r$-uniform hypergraph. A $1$-polymatroid $M$ on the set $E(K_n^r)$ with rank function $\rho_M$ is called \emph{weakly $H$-saturated} if for every copy $\wt{H} \subseteq K_n^r$ of the hypergraph $H$, the following holds:
    \begin{equation}\label{eq:condition-poly}
        \forall e \in E(\wt{H})\quad \rho_M(\wt{H} \setminus \{e\}) = \rho_M(\wt{H}).
    \end{equation}
\end{definition}

\begin{theorem}\label{thrm:poly-lower-bound}
    Let $r \ge 1$ and $n \ge 1$ be integers, and let $H$ be a non-empty $r$-uniform hypergraph. Let $M$ be a be a weakly $H$-saturated $1$-polymatroid on the set $E(K_n^r)$ with rank function $\rho_M$. Then
    \[
        \wsat(n, H) \ge \rho_M(K_n^r).
    \]
\end{theorem}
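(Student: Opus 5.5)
Take $F\in\Wsat(n,H)$ with $|E(F)|=\wsat(n,H)$, and let $e_1,\ldots,e_k$ be an admissible order of the missing edges. Set $F_0=F$ and $F_i=F\cup\{e_1,\ldots,e_i\}$, so that $F_k=K_n^r$. The plan is to show by induction on $i$ that $\rho_M(F_i)=\rho_M(F)$ for every $i$. Together with the bound $\rho_M(F)\le|E(F)|$ from \eqref{eq:poly-boundness}, this gives $\rho_M(K_n^r)=\rho_M(F_k)=\rho_M(F)\le|E(F)|=\wsat(n,H)$, as claimed.

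For the inductive step, suppose that adding $e_i$ to $F_{i-1}$ creates a copy $\wt{H}$ of $H$ with $e_i\in E(\wt{H})$ and $E(\wt{H})\setminus\{e_i\}\subseteq F_{i-1}$. Apply submodularity \eqref{eq:poly-submodular} with $A=F_{i-1}$ and $B=E(\wt{H})$. Then $A\cup B=F_i$ and $A\cap B=E(\wt{H})\setminus\{e_i\}$, so
\[
\rho_M(F_i)+\rho_M(\wt{H}\setminus\{e_i\})\le\rho_M(F_{i-1})+\rho_M(\wt{H}).
\]
By the weak $H$-saturation condition \eqref{eq:condition-poly}, $\rho_M(\wt{H}\setminus\{e_i\})=\rho_M(\wt{H})$. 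Cancelling gives $\rho_M(F_i)\le\rho_M(F_{i-1})$. Monotonicity \eqref{eq:poly-monotone} gives the reverse inequality, so $\rho_M(F_i)=\rho_M(F_{i-1})$.

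No step is a real obstacle; the argument is the same closure argument as in Kalai's matroid version, with matroid closure replaced by submodularity. The one point to check is that $A\cap B$ is exactly $E(\wt{H})\setminus\{e_i\}$. This holds because $e_i\notin F_{i-1}$ while every other edge of $\wt{H}$ lies in $F_{i-1}$.
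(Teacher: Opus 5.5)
Your proposal is correct and follows the paper's proof: the same submodularity step with $A=F_{i-1}$ and $B=E(\wt{H})$, combined with the saturation condition, gives $\rho_M(F_i)=\rho_M(F_{i-1})$, and then $\rho_M(K_n^r)=\rho_M(F)\le|E(F)|$. You also spell out two points the paper leaves implicit: the reverse inequality comes from monotonicity, and $A\cap B$ is exactly $E(\wt{H})\setminus\{e_i\}$.
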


\begin{proof}
    The proof is analogous to the proof of Theorem~\ref{thrm:matroid-lower-bound}.

    Fix a hypergraph $F \in \Wsat(n, H)$ such that $|E(F)| = \wsat(n, H)$. By definition, the edges of the complement $e_1, \ldots, e_k \in E(K_n^r) \setminus E(F)$ can be ordered such that for every $i \in [k]$ there exists a copy $\wt{H}_i \subseteq F \cup \{e_1, \ldots, e_i\}$ of $H$ with $e_i \in E(\wt{H}_i)$. From condition \eqref{eq:condition-poly} and the submodularity condition \eqref{eq:poly-submodular} with $A = F \cup \{e_1, \ldots, e_{i-1}\}$ and $B = \wt{H}$, we get
    \[
        \forall i \in [k] \quad \rho_M(F \cup \{e_1, \ldots, e_i\}) = \rho_M(F \cup \{e_1, \ldots, e_{i-1}\}),
    \]
    and thus,
    \[
        \rho_M(K_n^r) = \rho_M(F) \le |E(F)| = \wsat(n, H).
    \]
\end{proof}

\begin{remark*}
    In the definition of a \emph{polymatroid}~\cite{1978Fujishige,2003Edmonds}, compared with the definition of a \mbox{$1$-polymatroid}, condition \eqref{eq:poly-boundness} is relaxed by dropping the upper bound $\rho_M(A)\le |A|$ and requiring only $\rho_M(\emptyset)=0$.

    The extra upper bound $\rho_M(A)\le |A|$ in definition of \mbox{$1$-polymatroids} is a merely a convenient normalization, since scaling $\rho_M$ by a positive constant preserves all polymatroid axioms, and \eqref{eq:condition-poly} is invariant under such scaling.
    Hence, restricting to \mbox{$1$-polymatroids} causes no loss of generality, and we refer to Theorem~\ref{thrm:poly-lower-bound}
    as the polymatroidal method.
\end{remark*}

\section{Count polymatroids}\label{sec:count-polymatroids}

\subsection{Kruskal-Katona theorem}

To define count matroids, we need the concept of the shadow of a hypergraph.

\begin{definition}
    Let $r\ge 1$ and $m\ge 1$ be integers, and let $H$ be an $r$-uniform hypergraph. Define the \emph{$m$-shadow} of $H$ as
    \[
        \shadow{m}(H) = \left\{ U \in \binom{V(H)}{m}\ \middle|\ \exists e \in E(H),\ U \subseteq e \right\}.
    \]
\end{definition}

One of the main technical tools for bounding the size of the shadow of a hypergraph is the Kruskal--Katona theorem~\cites{1963Kruskal}{1968Katona}{1984Frankl}, which provides a lower bound in terms of \emph{left-compressed} hypergraphs.

\begin{definition}
    Let $e \ge 1$ and $r \ge 1$ be integers.

    We define inductively an $r$-uniform hypergraph with $e$ edges, denoted by $\lcG(r, e)$, called the \emph{left-compressed} hypergraph.

    For $r = 1$, set $V(\lcG(1, e))=[e]$, $E(\lcG(1, e)) = \binom{[e]}{1}$.

    For $r \ge 2$, let $k$ be the largest integer such that $e \ge \binom{k}{r}$. If $m = e - \binom{k}{r}$ is equal to zero, then set $V(\lcG(r, e))=[k]$, $E(\lcG(r, e)) = \binom{[k]}{r}$; otherwise, set $V(\lcG(r, e))=[k+1]$ and
    \[
        E(\lcG(r, e)) = \binom{[k]}{r} \cup \left\{ U \cup \{k+1\} \,\Big|\, U \in E(\lcG(r-1, m)) \right\}.
    \]
\end{definition}

\begin{theorem}[\cite{1963Kruskal, 1968Katona}]\label{thrm:kruskal-katona}
    Let $r \ge 1$, $e\ge 1$, and $0\le m \le r$ be integers. Let $F$ be an $r$-uniform hypergraph with $e$ edges. Then
    \[
        |\shadow{m}(F)| \ge |\shadow{m}(\lcG(r, e))|.
    \]
\end{theorem}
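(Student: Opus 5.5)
The plan is the classical shifting (compression) proof, arranged so that the $m$-shadow case reduces to the $(r-1)$-shadow case. First I would record that $\lcG(r,e)$ is exactly the family of the first $e$ sets of $\binom{\mathbb{N}}{r}$ in colexicographic order ($A<B$ iff $\max(A\triangle B)\in B$). This follows by induction on $r$ from the recursive definition: the first $\binom{k}{r}$ colex sets are $\binom{[k]}{r}$, and the next $m$ sets are $U\cup\{k+1\}$ with $U$ running over the first $m$ sets of $\binom{[k]}{r-1}$. Two facts about colex initial segments $\mathcal{C}(r,e)$ then follow.
\begin{enumerate}[(a)]
\item $\shadow{r-1}(\mathcal{C}(r,e))$ is itself a colex initial segment $\mathcal{C}(r-1,e')$. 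Indeed, if $B\subset A$ with $A\in\mathcal{C}(r,e)$ and $B'<B$ in colex, then one can exhibit $A'\le A$ with $B'\subset A'$.
\item $e\mapsto |\shadow{r-1}(\mathcal{C}(r,e))|$ is nondecreasing.
\end{enumerate}
Since $\shadow{m}=\shadow{m}\circ\shadow{m+1}\circ\dots\circ\shadow{r-1}$, the case $m=r-1$ combined with (a) and (b) gives the general case. Apply the $m=r-1$ statement to $F$ to get $|\shadow{r-1}(F)|\ge|\shadow{r-1}(\lcG(r,e))|$. By (a) the right-hand family is a colex segment, so the $m=r-2$ step applied to $\shadow{r-1}(F)$, together with monotonicity (b), compares it with the shadow of that segment, and so on down to level $m$. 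The cases $m=0$ and $m=r$ are trivial.

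For $m=r-1$, I would first reduce to shifted families. For $i<j$ define the compression $S_{ij}$: replace $A$ by $(A\setminus\{j\})\cup\{i\}$ when $j\in A$, $i\notin A$, and the result is not already in $F$. The standard lemma is $\shadow{r-1}(S_{ij}F)\subseteq S_{ij}(\shadow{r-1}F)$, so $|\shadow{r-1}(S_{ij}F)|\le|\shadow{r-1}F|$, while $|S_{ij}F|=|F|$. Proving this lemma is a short case check on whether a shadow set contains $i$ and/or $j$. Since $\sum_{A\in F}\sum_{a\in A}a$ strictly decreases under any nontrivial compression, after finitely many steps $F$ is shifted, meaning $S_{ij}F=F$ for all $i<j$.

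For shifted $F$ I would then argue by induction on $r$, and for fixed $r$ on $e$, splitting by vertex $1$. Let $F_0=\{A\in F:1\notin A\}$ and $F_1=\{A\setminus\{1\}:1\in A\in F\}$, so $F_1$ is $(r-1)$-uniform. Shiftedness gives $\shadow{r-1}(F_0)\subseteq F_1$: for $A\in F_0$ and $x\in A$, the set $(A\setminus\{x\})\cup\{1\}$ lies in $F$. Hence
\[
|\shadow{r-1}F| = |F_1| + |\shadow{r-2}F_1| \quad\text{and}\quad |F_1|\ge|\shadow{r-1}F_0|.
\]
Both terms are bounded below by the inductive hypothesis: the first using $F_0$ (which has fewer than $e$ edges unless $F_1$ is empty, which is impossible for shifted nonempty $F$), the second using the $(r-1)$-uniform case for $F_1$.

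The main obstacle is the final numerical step. One has to show that the resulting lower bound, a function of $|F_1|=e_1$ with $e_1+|F_0|=e$, is minimized in a way that gives at least $|\shadow{r-1}\mathcal{C}(r,e)|$. I would write $e=\binom{a_r}{r}+\binom{a_{r-1}}{r-1}+\dots$ in cascade form and split into the two cases $e_1\ge\binom{a_r-1}{r-1}+\binom{a_{r-1}-1}{r-2}+\dots$ or not. In the first case the term $|F_1|+|\shadow{r-2}F_1|$ alone suffices, using Pascal's rule $\binom{x}{k}=\binom{x-1}{k}+\binom{x-1}{k-1}$. In the second case $|F_0|$ is large, and $|F_1|\ge|\shadow{r-1}F_0|$ plus induction gives the bound. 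This is Frankl's short argument, and carrying out the cascade arithmetic cleanly is where care is needed.
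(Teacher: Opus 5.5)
The paper does not prove this statement. It quotes the Kruskal--Katona theorem and, among its citations, points to Frankl's short proof \cite{1984Frankl}. Your outline is that proof, and it is sound. The steps all check out: identifying $\lcG(r,e)$ with the colex initial segment, reducing general $m$ to $m=r-1$ (the shadow of a colex segment is again one), compressing to a shifted family, and splitting on vertex $1$ to get $|\shadow{r-1}F|=|F_1|+|\shadow{r-2}F_1|$ with $\shadow{r-1}F_0\subseteq F_1$. The compression lemma $\shadow{r-1}(S_{ij}F)\subseteq S_{ij}(\shadow{r-1}F)$ is indeed a routine case analysis.

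Two points need handling when you write out the arithmetic.

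First, set $T=\sum_i\binom{a_i-1}{i-1}$. Your second case $|F_1|<T$ never actually occurs. It forces $|F_0|>e-T=\sum_i\binom{a_i-1}{i}$, and then induction gives $|F_1|\ge|\shadow{r-1}F_0|\ge T$, a contradiction. So the Pascal-rule case is the only one, rather than case two ``giving the bound''.

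Second, both inductive steps use the formula $|\shadow{r-1}\lcG(r,e)|=\sum_i\binom{a_i}{i-1}$ for the greedy cascade of $e$. This follows from the recursive definition of $\lcG$, via the same decomposition used in the proof of Proposition~\ref{prop:f-r-delta-shadow-bound}. You also have to go through monotonicity where the cascades degenerate:
\begin{enumerate}
\item When $a_t=t$, $\sum_i\binom{a_i-1}{i}$ is not a proper cascade. There the strict inequality $|F_0|\ge e-T+1$ together with $t\ge 1$ is what makes $|\shadow{r-1}F_0|\ge T$ go through.
\item When $t=1$, $T$ contains a term with lower index $0$. You simply discard it when applying the $(r-1)$-uniform case to $F_1$.
\end{enumerate}
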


\subsection{Count Matroids}

To define count matroids, we first need the notion of a submodular function.
\begin{definition}
    Let $E$ be a set and let $L \colon \pwrs{E} \to \mathbb{R}$ be a function.

    The function $L$ is called \emph{non-decreasing} if
    \[
        \forall A \subseteq B \subseteq E \quad L(A) \le L(B).
    \]

    The function $L$ is called \emph{submodular} if
    \[
        \forall A, B \subseteq E \quad L(A \cup B) + L(A \cap B) \le L(A) + L(B).
    \]
\end{definition}

The following result from~\cite[Proposition 11.1.1]{2011Oxley} (together with Proposition~\ref{prop:matroid-rank-via-circuits}) allows one to obtain matroids from submodular functions.

\begin{proposition}\label{prop:submodular-to-matroid}
    Let $E$ be a finite set and let $L : \pwrs{E} \to \mathbb{Z}$ be a submodular, non-decreasing function. Then the function $\rk_N$, defined for non-empty $A \subseteq E$ as
    \[
        \rk_N(A) = \max_{B \subseteq A} \left\{\, |B| \ \Big|\ \forall C \subseteq B,\ C \neq \emptyset:\ |C| \le L(C) \,\right\},
    \]
    defines a matroid $N = (E, \rk_N)$.
\end{proposition}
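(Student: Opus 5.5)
We have to prove Proposition~\ref{prop:submodular-to-matroid}: for a submodular, non-decreasing, integer-valued $L$, the function $\rk_N$ is the rank function of a matroid. Call a set $B$ \emph{$L$-sparse} if every non-empty $C\subseteq B$ satisfies $|C|\le L(C)$, and put $\mathcal{I}=\{B\subseteq E : B \text{ is } L\text{-sparse}\}$. Then $\rk_N(A)$ is the largest size of a member of $\mathcal{I}$ inside $A$, and we set $\rk_N(\emptyset)=0$.

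The plan is to show that $\mathcal{I}$ satisfies the independence axioms of a matroid. Then the standard fact that $A\mapsto\max\{|B| : B\subseteq A,\ B\in\mathcal{I}\}$ is a matroid rank function gives the three rank axioms; this is the equivalence of definitions in~\cite{2011Oxley}. Two of the axioms are immediate:
\begin{itemize}
\item $\emptyset\in\mathcal{I}$ holds vacuously;
\item $\mathcal{I}$ is closed under subsets, since the defining condition is inherited by subsets.
\end{itemize}
Monotonicity and $0\le\rk_N(A)\le|A|$ also follow directly from the max formula.

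The main obstacle is the augmentation (exchange) axiom. Let $I,J\in\mathcal{I}$ with $|I|<|J|$; we must find $e\in J\setminus I$ with $I+e\in\mathcal{I}$. Suppose no such $e$ exists. Then for each $e\in J\setminus I$ there is a non-empty $C_e\subseteq I+e$ with $|C_e|>L(C_e)$. Since $C_e\not\subseteq I$, we have $e\in C_e$.

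Call a non-empty $C\subseteq I$ \emph{tight} if $|C|=L(C)$. Integrality enters here: because $|C_e\setminus\{e\}|\le L(C_e\setminus\{e\})\le L(C_e)<|C_e|$ and all values are integers, $C_e\setminus\{e\}$ is tight, and $L(C_e)=|C_e|-1$. (If $C_e=\{e\}$, then $L(\{e\})<1$, contradicting $\{e\}\subseteq J\in\mathcal{I}$.)

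Next, tight sets that intersect are closed under union. For intersecting tight $T_1,T_2\subseteq I$, submodularity gives
\[|T_1\cup T_2|+|T_1\cap T_2|=L(T_1)+L(T_2)\ge L(T_1\cup T_2)+L(T_1\cap T_2).\]
Sparsity of $I$ gives $L(T_1\cup T_2)\ge|T_1\cup T_2|$ and $L(T_1\cap T_2)\ge|T_1\cap T_2|$, so equality holds throughout and both the union and the intersection are tight.

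A similar computation is needed for the deficient sets $D=C_e$, which contain $e$ and satisfy $L(D)\le|D|-1$. Taking unions of the $C_e$ for different $e$ along overlapping components, submodularity should yield a set $X=\bigcup_e C_e$ (or one component of it) with
\[L(X)\le |X\cap I| .\]
This bookkeeping is the delicate point: one must track how the deficiency behaves when unioning sets that meet in $I$ or not at all. Disjoint pieces can be handled by also using $L\ge0$ monotonicity, or by working componentwise.

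To finish, compare with $J$. Let $X'=X\cup J$, or more cleanly restrict attention to $C=X\cap J$. Monotonicity and sparsity of $J$ give
\[|X\cap J|\le L(X\cap J)\le L(X)\le|X\cap I|.\]
Summing over components, and noting that every $e\in J\setminus I$ lies in some $C_e$, we get $|J|\le|I|$. This contradicts $|I|<|J|$.

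If the union argument proves messy, the fallback is to cite~\cite[Proposition 11.1.1]{2011Oxley} directly. There $\mathcal{C}(L)$ is taken to be the minimal non-empty sets $C$ with $|C|>L(C)$, and one verifies the circuit elimination axiom by the same submodularity computation. Proposition~\ref{prop:matroid-rank-via-circuits} then identifies $\rk_N$ with the rank function of that matroid.
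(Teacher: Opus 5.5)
Your augmentation route can be made to work, but its decisive step is only asserted. You say submodularity ``should yield'' a set $X$ with $L(X)\le|X\cap I|$, leave the bookkeeping open, and one of your two proposed remedies is invalid.

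Write $C_f=T_f\cup\{f\}$ with $T_f\subseteq I$ and $L(C_f)\le|T_f|$. Merging $C_f$ into a current union $X$ gives $L(X\cup C_f)\le L(X)+|T_f|-L(X\cap C_f)$, so you need $L(X\cap C_f)\ge|X\cap C_f|$. Sparsity of $I$ supplies this only when $X\cap C_f$ is a \emph{non-empty} subset of $I$. For disjoint pieces you would need $L(\emptyset)\ge 0$. That is not assumed, and it fails in the paper's own use of the proposition: there $L_{\mb{a}}(\emptyset)=a_0$, and for $H=K_5$ in Theorem~\ref{thrm:gamma-lower-bound-hyper} one gets $a_0=9-\frac{8}{3}\cdot 5<0$.

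So you must work componentwise:
\begin{enumerate}
\item Group $J\setminus I$ into classes under the transitive closure of the relation $T_e\cap T_f\neq\emptyset$.
\item Within a class, add the sets $C_f$ in an order where each new $T_f$ meets the $I$-part of the current union. Then $f\notin X$, so $X\cap C_f=X\cap T_f\neq\emptyset$.
\item Induction now gives $L(X_j)\le|X_j\cap I|$ for each class $j$.
\end{enumerate}

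Your final summation also skips a disjointness check. The sets $X_j\cap I$ are pairwise disjoint. So are the sets $X_j\cap J$, because each $e\in J\setminus I$ lies in no $C_f$ with $f\neq e$. Summing $|X_j\cap J|\le L(X_j\cap J)\le L(X_j)\le|X_j\cap I|$ gives $|J\setminus I|+|J\cap T^*|\le|T^*|$, where $T^*=\bigcup_j(X_j\cap I)\subseteq I$. Hence $|J\setminus I|\le|I\setminus J|$, and so $|J|\le|I|$, the desired contradiction.

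Your fallback is exactly what the paper does. It gives no argument of its own; it cites \cite[Proposition~11.1.1]{2011Oxley}, which says the minimal non-empty sets $C$ with $|C|>L(C)$ are the circuits of a matroid. It then uses Proposition~\ref{prop:matroid-rank-via-circuits} to identify $\rk_N$ with that matroid's rank function.

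That route is shorter because circuit elimination only ever merges two distinct circuits $C_1,C_2$ sharing an element $e$. Their intersection is automatically a non-empty proper subset of $C_1$, so $L\bigl((C_1\cup C_2)\setminus\{e\}\bigr)\le L(C_1)+L(C_2)-L(C_1\cap C_2)\le|C_1\cup C_2|-2$. The disjoint case that complicates your approach never arises.
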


We will apply Proposition~\ref{prop:submodular-to-matroid} to a function that is defined in terms of multihypergraphs.

\begin{definition}
    Let $q \ge 1$ and $r \ge 1$ be integers. For an $r$-uniform hypergraph $H$, let $\mulg{H}{q}$ denote the multihypergraph on the vertex set $V(H)$ with edges
    \[
        E(\mulg{H}{q}) = \left\{\, (e, i) \mid e \in E(H),\ i \in [q] \,\right\}.
    \]
\end{definition}

\begin{definition}
    Let $q \ge 1$, $r \ge 1$ and $n \ge 1$ be integers. For a multihypergraph $G \subseteq \mulg{K_n^r}{q}$, define its \emph{underlying hypergraph} as
    \[
        \pi(G) = \left\{\, e \in E(K_n^r) \mid \exists i \in [q]\ (e, i) \in E(G) \,\right\}.
    \]
    For an ordinary hypergraph $G \subseteq K_n^r$, we define $\pi(G) = G$.
\end{definition}

The following function will be used in Proposition~\ref{prop:submodular-to-matroid} and is analogous to the one introduced in~\cite{2001PikhurkoCount}.
\begin{definition}
    Let $r \ge 1$, $q \ge 1$ and $n \ge 1$ be integers, and let $a_0, a_1, \ldots, a_r$ be real numbers. Define the function $L_{\mb{a}} \colon \pwrs{E(\mulg{K_n^r}{q})} \to \mathbb{R}$ by
    \[
        \forall G \subseteq \mulg{K_n^r}{q} \quad L_{\mb{a}}(G) = a_0 + \sum_{i=1}^{r} a_i \cdot |\shadow{i}(\pi(G))|.
    \]
\end{definition}

\begin{proposition}\label{prop:L-a-is-submodular}
    Let $r \ge 1$, $q \ge 1$ and $n \ge 1$ be integers, and let $a_0, a_1, \ldots, a_r$ be integers. Suppose that for all $i\ge 1$ we have $a_i \ge 0$. Then the function $L_{\mb{a}} \colon \pwrs{E(\mulg{K_n^r}{q})} \to \mathbb{R}$ is integer-valued, non-decreasing, and submodular.
\end{proposition}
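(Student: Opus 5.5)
Since $L_{\mb{a}}$ is a nonnegative linear combination (plus a constant) of the functions $G\mapsto |\shadow{i}(\pi(G))|$, and the class of integer-valued, non-decreasing, submodular functions is closed under addition of constants and nonnegative integer combinations, I will reduce the statement to a single shadow function. Concretely, for each $1\le i\le r$ I will show that $f_i(G)=|\shadow{i}(\pi(G))|$ has these three properties on $\pwrs{E(\mulg{K_n^r}{q})}$. Integrality of $L_{\mb{a}}$ is then immediate, since the $a_i$ and $a_0$ are integers and each $f_i$ takes values in $\Nz$. The constant $a_0$ cancels in the submodular inequality (it appears once on each side) and does not affect monotonicity. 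Nonnegativity of $a_i$ for $i\ge 1$ is exactly what is needed so that multiplying the inequalities for $f_i$ by $a_i$ preserves their direction.

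For monotonicity of $f_i$: if $A\seq B$, then $\pi(A)\seq\pi(B)$, because $\pi$ is just the projection $(e,j)\mapsto e$ applied elementwise. Every $i$-subset covered by an edge of $\pi(A)$ is then covered by an edge of $\pi(B)$, so $\shadow{i}(\pi(A))\seq\shadow{i}(\pi(B))$. One small point: $\shadow{i}$ was defined using $V(H)$, but since $U$ must lie inside some edge, the vertex-set convention plays no role.

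For submodularity, I will write $f_i(G)=|\Phi(G)|$ with $\Phi(G)=\bigcup_{(e,j)\in G}\binom{e}{i}$, which is the coverage function of the family of sets $\binom{e}{i}$ indexed by the ground elements $(e,j)$. Two facts about $\Phi$ follow directly from this union form:
\[
\Phi(A\cup B)=\Phi(A)\cup\Phi(B),\qquad \Phi(A\cap B)\seq\Phi(A)\cap\Phi(B).
\]
Combining these with inclusion–exclusion gives
\[
|\Phi(A\cup B)|+|\Phi(A\cap B)|\le |\Phi(A)\cup\Phi(B)|+|\Phi(A)\cap\Phi(B)|=|\Phi(A)|+|\Phi(B)|,
\]
which is the submodular inequality for $f_i$.

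There is no real obstacle here. The only step needing a moment of care is the inclusion $\Phi(A\cap B)\seq\Phi(A)\cap\Phi(B)$, which may be strict: it is strict when the same $i$-set is covered by different edges in $A$ and in $B$, or, in the multihypergraph setting, by different copies $(e,j)\neq(e,j')$ of the same edge. This is precisely why the argument yields only an inequality rather than equality, and the inequality goes in the direction submodularity requires, so the multiplicity index $q$ causes no difficulty.
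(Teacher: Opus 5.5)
Your proof is correct and follows the same route as the paper: reduce to the individual functions $|\shadow{i}(\pi(G))|$, then use that non-decreasing submodular functions are closed under nonnegative combinations and under adding a constant. You also supply the coverage-function argument ($\Phi(A\cup B)=\Phi(A)\cup\Phi(B)$ and $\Phi(A\cap B)\seq\Phi(A)\cap\Phi(B)$, followed by inclusion--exclusion), which the paper omits as ``straightforward''.
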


\begin{proof}
    It is straightforward to show that $|\shadow{i}(\pi(G))|$ is a non-decreasing and submodular function for all $i \ge 1$. Therefore, every linear combination of such functions with non-negative coefficients is also non-decreasing and submodular. Adding the constant $a_0$ does not affect these properties.

    Integrality immediately follows from the integrality of the coefficients $a_i$.
\end{proof}

In view of Propositions~\ref{prop:L-a-is-submodular} and~\ref{prop:submodular-to-matroid}, the functions $L_{\mb{a}}$ give rise to a family of matroids, which we call \emph{count matroids}.

\begin{definition}\label{def:count-matroid}
    Let $r \ge 1$, $q \ge 1$ and $n \ge 1$ be integers. Let $a_0, a_1, \ldots, a_r$ be integers such that for all $i\ge 1$ we have $a_i \ge 0$. Denote by $M(n, r, q, \mb{a})$ the matroid on $E(\mulg{K_n^r}{q})$ obtained by applying Proposition~\ref{prop:submodular-to-matroid} to the function $L_{\mb{a}} \colon \pwrs{E(\mulg{K_n^r}{q})} \to \mathbb{R}$.
\end{definition}

We need the following properties of count matroids.

\begin{proposition}\label{prop:count-matroid-bound-via-L}
    Let $r,q,a_0, a_1, \ldots, a_r$ and $n$ be as in Definition~\ref{def:count-matroid}. Then for every non-empty $A \subseteq E(\mulg{K_n^r}{q})$,
    \[
        \rk_{M(n, r, q, \mb{a})}(A) \le \max(0, L_{\mb{a}}(A)).
    \]
\end{proposition}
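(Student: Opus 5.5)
The plan is to argue directly from the definition of $\rk_N$ in Proposition~\ref{prop:submodular-to-matroid}. Write $M = M(n,r,q,\mb{a})$ and fix a non-empty set $A \subseteq E(\mulg{K_n^r}{q})$. Choose $B \subseteq A$ attaining the maximum in the formula for $\rk_M(A)$. Then $\rk_M(A) = |B|$, and every non-empty $C \subseteq B$ satisfies $|C| \le L_{\mb{a}}(C)$.

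There are two cases. If $B = \emptyset$, then $\rk_M(A) = 0 \le \max(0, L_{\mb{a}}(A))$ and we are done. Otherwise $B$ is non-empty, so we may take $C = B$ in the defining condition and obtain
\[
    \rk_M(A) = |B| \le L_{\mb{a}}(B).
\]
Since $B \subseteq A$ and $L_{\mb{a}}$ is non-decreasing by Proposition~\ref{prop:L-a-is-submodular}, we get $L_{\mb{a}}(B) \le L_{\mb{a}}(A) \le \max(0, L_{\mb{a}}(A))$. This completes the argument.

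The only point needing care is the monotonicity of $L_{\mb{a}}$. It holds because $B \subseteq A$ implies $\pi(B) \subseteq \pi(A)$, which in turn gives $\shadow{i}(\pi(B)) \subseteq \shadow{i}(\pi(A))$ for each $i \ge 1$. Combined with $a_i \ge 0$, this yields $L_{\mb{a}}(B) \le L_{\mb{a}}(A)$. Note that the empty case of $B$ is exactly why the bound is stated with $\max(0,\cdot)$: $a_0$ may be negative, so $L_{\mb{a}}(A)$ itself can be negative. I do not expect any real obstacle here.
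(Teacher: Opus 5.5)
Your proposal is correct and follows essentially the same route as the paper: if $\rk(A)\neq 0$, take the maximizing nonempty $B\subseteq A$, so that $\rk(A)=|B|\le L_{\mb{a}}(B)\le L_{\mb{a}}(A)$ by monotonicity of $L_{\mb{a}}$. The case $\rk(A)=0$ is trivial.
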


\begin{proof}
    If $\rk(A) \neq 0$, then by definition there exists a subset $B \subseteq A$ such that $1 \le |B| \le L(B)$ and
    \(
        \rk(A) = |B|.
    \)
    The claim now follows from the monotonicity of $L$.
\end{proof}

\begin{proposition}\label{prop:count-matroid-dep-less-L}
    Let $r,q,a_0, a_1, \ldots, a_r$ and $n$ be as in Definition~\ref{def:count-matroid}. Let $A \subseteq E(\mulg{K_n^r}{q})$ be a non-empty set such that $\rk_{M(n, r, q, \mb{a})}(A) < |A|$. Then there exists a non-empty subset $B \subseteq A$ such that
    \[
        |B| > L(B).
    \]
\end{proposition}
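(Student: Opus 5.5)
We prove the contrapositive directly from the definition of the rank function in Proposition~\ref{prop:submodular-to-matroid}. Write $\rk=\rk_{M(n,r,q,\mb{a})}$ and $L=L_{\mb{a}}$, and suppose, for contradiction, that every non-empty subset $B\seq A$ satisfies $|B|\le L(B)$.

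Then $A$ itself is admissible in the maximum that defines $\rk(A)$. Indeed, taking $B=A$ in
\[
\rk(A)=\max_{B\seq A}\left\{\,|B| \ \Big|\ \forall C\seq B,\ C\neq\emptyset:\ |C|\le L(C)\,\right\},
\]
the condition must be checked for every non-empty $C\seq A$. This holds by our assumption, since such a $C$ is a non-empty subset of $A$.

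It follows that $\rk(A)\ge |A|$. On the other hand, the matroid axiom $\rk(A)\le|A|$ holds by Proposition~\ref{prop:submodular-to-matroid}, and it is also immediate because every admissible $B$ is a subset of $A$. Hence $\rk(A)=|A|$, contradicting the hypothesis $\rk(A)<|A|$.

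Therefore some non-empty $B\seq A$ satisfies $|B|>L(B)$. The only point to check is that the formula defining $\rk_N$ is the one used for count matroids in Definition~\ref{def:count-matroid}; this is true by construction, so there is no real obstacle.
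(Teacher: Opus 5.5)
Your proposal is correct and matches the paper's own proof: if every non-empty $B\subseteq A$ satisfied $|B|\le L(B)$, then $A$ itself would be admissible in the maximum defining $\rk_N(A)$ in Proposition~\ref{prop:submodular-to-matroid}, giving $\rk(A)=|A|$ and a contradiction. Your extra remark that $\rk(A)\le|A|$ because every admissible set is a subset of $A$ just makes explicit the step the paper leaves implicit.
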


\begin{proof}
    Suppose that for every non-empty $B \subseteq A$ we have $|B| \le L(B)$. Then by the definition of $\rk_M$ from Proposition~\ref{prop:submodular-to-matroid}, $\rk_M(A) = |A|$, which contradicts the assumption.
\end{proof}

\begin{proposition}\label{prop:cirucit-and-L-count-matroid}
    Let $r,q,a_0, a_1, \ldots, a_r$ and $n$ be as in Definition~\ref{def:count-matroid}. Let $C \subseteq E(\mulg{K_n^r}{q})$ be a circuit of the matroid $M(n, r, q, \mb{a})$ with $|C| \ge 2$. Then for every element $e \in C$, we have
    \[
        L_{\mb{a}}(C) = L_{\mb{a}}(C \setminus \{e\}) = |C \setminus \{e\}|.
    \]
\end{proposition}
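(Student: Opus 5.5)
Since $C$ is a circuit with $|C|\ge 2$, for every $e\in C$ the set $C\setminus\{e\}$ is non-empty and independent, so $\rk(C\setminus\{e\}) = |C|-1$. Also $\rk(C) < |C|$, and rank is monotone, so $\rk(C) = |C|-1$. The plan is to squeeze $L_{\mb{a}}(C)$ and $L_{\mb{a}}(C\setminus\{e\})$ between $|C|-1$ from below and from above.

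\emph{Upper bound.} Apply Proposition~\ref{prop:count-matroid-dep-less-L} to $A=C$. This gives a non-empty $B\subseteq C$ with $|B| > L_{\mb{a}}(B)$. We claim $B=C$. Otherwise $B\subseteq C\setminus\{e'\}$ for some $e'\in C$, and $C\setminus\{e'\}$ is independent, so $B$ is independent (rank is submodular and bounded by cardinality, so subsets of independent sets are independent). But then Proposition~\ref{prop:count-matroid-bound-via-L} gives $|B| = \rk(B)\le \max(0,L_{\mb{a}}(B))$. Since $|B|\ge 1$, this forces $|B|\le L_{\mb{a}}(B)$, a contradiction. Hence $L_{\mb{a}}(C) < |C|$, and since $L_{\mb{a}}$ is integer-valued (Proposition~\ref{prop:L-a-is-submodular}), $L_{\mb{a}}(C)\le |C|-1$.

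\emph{Lower bound.} For any $e\in C$, the set $C\setminus\{e\}$ is non-empty and independent. Proposition~\ref{prop:count-matroid-bound-via-L} then yields $|C|-1 = \rk(C\setminus\{e\}) \le \max(0, L_{\mb{a}}(C\setminus\{e\}))$. Because $|C|-1\ge 1$, the maximum is attained by $L_{\mb{a}}(C\setminus\{e\})$, so $L_{\mb{a}}(C\setminus\{e\})\ge |C|-1$.

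\emph{Conclusion.} Monotonicity of $L_{\mb{a}}$ (Proposition~\ref{prop:L-a-is-submodular}) gives
\[
|C|-1\le L_{\mb{a}}(C\setminus\{e\})\le L_{\mb{a}}(C)\le |C|-1,
\]
so all three quantities are equal. The only delicate point is showing that the violating set $B$ must be all of $C$, which relies on independence being closed under subsets. The hypothesis $|C|\ge2$ is used exactly to rule out the $\max(0,\cdot)$ degenerate case.
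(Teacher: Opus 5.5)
Your proof is correct and follows essentially the same route as the paper. Both arguments use Proposition~\ref{prop:count-matroid-dep-less-L} on $C$, rule out $B\neq C$ because proper subsets of a circuit are independent, and then squeeze $|C|-1\le L_{\mb{a}}(C\setminus\{e\})\le L_{\mb{a}}(C)\le |C|-1$ using Proposition~\ref{prop:count-matroid-bound-via-L} and monotonicity. You also make explicit two steps the paper leaves implicit: the integrality of $L_{\mb{a}}$, needed to pass from $L_{\mb{a}}(C)<|C|$ to $L_{\mb{a}}(C)\le |C|-1$, and the handling of the $\max(0,\cdot)$.
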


\begin{proof}
    By Proposition~\ref{prop:count-matroid-dep-less-L}, there exists a non-empty $B \subseteq C$ such that $|B| > L(B)$. If $B \neq C$, then $B$ is independent and by Proposition~\ref{prop:count-matroid-bound-via-L}, $|B| = \rk_M(B) \le L(B)$, a contradiction. Hence $C > L(C)$.

    From Proposition~\ref{prop:count-matroid-bound-via-L},
    \begin{equation*}
        1 \le |C \setminus \{e\}| = \rk_M(C \setminus \{e\}) \le L(C \setminus \{e\}) \le L(C) \le |C| - 1 = |C \setminus \{e\}|.
    \end{equation*}
\end{proof}

To determine the rank of a count matroid, we need the concept of connectivity.

\begin{definition}
    Let $N = (E, \rk_N)$ be a matroid. Define the relation $\xi_N$ on $E$ as follows: for $a, b \in E$,
    \[
        \matcon{N}{a}{b} \quad \iff \quad \exists \text{ circuit } C: a \in C \text{ and } b \in C.
    \]

    We say that $N$ is \emph{connected} if for every two distinct elements $a, b \in E$, we have $\matcon{N}{a}{b}$.
\end{definition}

To check connectivity, it is convenient to use the transitivity of $\xi_N$~\cite[Proposition 4.1.2]{2011Oxley}.

\begin{proposition}[\cite{2011Oxley}]\label{prop:matroid-connectivity-is-trasitive}
    Let $N = (E, \rk_N)$ be a matroid and let $a, b \in E$ be two distinct elements. Suppose that there exists a sequence $a = a_0, a_1, \ldots, a_s = b$ such that $\forall i \ge 1,\ \matcon{N}{a_{i-1}}{a_i}$. Then $\matcon{N}{a}{b}$.
\end{proposition}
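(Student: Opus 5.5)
We argue by induction on $s$, so the key case is $s=2$. For $s=1$ there is nothing to prove. If the claim holds for chains of length $2$, then from $\matcon{N}{a_0}{a_1}$ and $\matcon{N}{a_1}{a_2}$ we get $\matcon{N}{a_0}{a_2}$ (when $a_0\ne a_2$), and the chain shortens by one. If some $a_i=a_{i+2}$, we simply delete two terms. It therefore suffices to show the following: if $a\ne b$, a circuit $C_1$ contains $a$ and $c$, and a circuit $C_2$ contains $c$ and $b$, then some circuit contains both $a$ and $b$.

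The first step is to derive the \emph{strong circuit elimination} property from the rank axioms. It says: if $C_1\ne C_2$ are circuits, $e\in C_1\cap C_2$ and $f\in C_1\setminus C_2$, then there is a circuit $C_3\seq (C_1\cup C_2)\setminus\{e\}$ with $f\in C_3$.

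\emph{Weak elimination.} Since $C_1\cap C_2$ is a proper subset of a circuit, it is independent. Submodularity then gives
\[\rk_N(C_1\cup C_2)\le (|C_1|-1)+(|C_2|-1)-|C_1\cap C_2| = |C_1\cup C_2|-2.\]
Hence $(C_1\cup C_2)\setminus\{e\}$ has rank smaller than its size, so it is dependent. By Proposition~\ref{prop:matroid-rank-via-circuits}, it contains a circuit.

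\emph{Strong elimination.} We upgrade this by the standard induction on $|C_1\cup C_2|$. Suppose the circuit obtained avoids $f$. It must meet $C_2\setminus C_1$, since otherwise it would be a proper subset of $C_1$. Pick $g$ in that intersection. Eliminate $g$ between $C_2$ and this circuit to get a circuit $C_2'$ with $e\in C_2'$, $g\notin C_2'$ and $f\notin C_2'$. Then $|C_1\cup C_2'|<|C_1\cup C_2|$, and the induction hypothesis applied to $C_1$ and $C_2'$ finishes. This bookkeeping is the most technical part, and I expect it to be the main obstacle.

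For the $s=2$ statement, suppose it fails. Among all triples of circuits $C_1,C_2$ and a middle element $c$ with $a,c\in C_1$, $c,b\in C_2$ and no circuit containing both $a$ and $b$, choose one minimizing $|C_1\cup C_2|$. Then $b\notin C_1$ and $a\notin C_2$, so $C_1\ne C_2$. Apply strong elimination with $e=c$ and $f=b$ (with the roles of $C_1$ and $C_2$ swapped). This gives a circuit $C_3\seq (C_1\cup C_2)\setminus\{c\}$ containing $b$.

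Now $C_3$ is not contained in $C_2\setminus\{c\}$, because a proper subset of a circuit is independent. So there is some $x\in C_3\cap(C_1\setminus C_2)$. Since $a\in C_3$ would give a circuit containing $a$ and $b$, we have $a\notin C_3$, and thus $x\ne a$. The triple $(C_1, C_3, x)$ satisfies the hypotheses: $a,x\in C_1$ and $x,b\in C_3$. Moreover $C_1\cup C_3\seq (C_1\cup C_2)\setminus\{c\}\cup\{c\}$ only through $C_1$, and $C_3$ avoids the element $c$. Hence we need to check $|C_1\cup C_3|<|C_1\cup C_2|$ via an element of $C_2\setminus C_1$ missing from $C_3$.

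If $C_3\supseteq C_2\setminus C_1$, I would instead perform a symmetric elimination starting from $a$'s side. Alternatively, I would minimize the pair $(|C_1\cup C_2|,|C_1\cap C_2|)$ lexicographically, so that the new triple is strictly smaller. The new triple then contradicts minimality. I expect this to go through with the lexicographic refinement.
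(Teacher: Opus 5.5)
Your reduction to the case $s=2$ and your derivation of strong circuit elimination from the rank axioms are fine. The paper itself only quotes this statement from Oxley, so everything depends on whether your $s=2$ argument closes, and it does not: the descent step is exactly where you stop.

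You correctly observe that the new triple $(C_1,C_3,x)$ need not have a smaller union. This happens precisely when $C_3\setminus C_1=C_2\setminus C_1$. Neither of your proposed repairs is justified:
\begin{itemize}
\item \emph{Lexicographic refinement.} In the tie case $|C_1\cap C_3|-|C_1\cap C_2|=|C_3|-|C_2|$, and nothing forces $|C_3|<|C_2|$. Take the uniform matroid $U_{3,7}$ on $\{a,b,c,d,e,x,y\}$ (all $4$-subsets are circuits), with $C_1=\{a,c,x,y\}$ and $C_2=\{b,c,d,e\}$. Strong elimination is allowed to return $C_3=\{b,d,e,x\}$. Then $(|C_1\cup C_3|,|C_1\cap C_3|)=(7,1)=(|C_1\cup C_2|,|C_1\cap C_2|)$. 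This matroid does have a circuit through $a$ and $b$, but the point is that your descent inequality is not derived from the hypothesis and is false as a statement about the configuration.
\item \emph{Symmetric elimination.} Eliminating $c$ while keeping $a$ instead has the mirror-image defect. It may return $\{a,d,x,y\}\supseteq C_1\setminus C_2$, and nothing prevents both eliminations from failing at once.
\end{itemize}

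The missing idea is to compose a second elimination with the first rather than replace it. You have $b\in C_3$, $c\notin C_3$, $a\notin C_3$ and $x\in C_3\cap(C_1\setminus C_2)$. Apply strong elimination to $C_1$ and $C_3$ at $x$, keeping $a$. This is legitimate: $x\in C_1\cap C_3$, $a\in C_1\setminus C_3$, and $C_1\ne C_3$ since $b\in C_3\setminus C_1$. It yields a circuit $C_4$ with $a\in C_4\subseteq (C_1\cup C_3)\setminus\{x\}\subseteq (C_1\cup C_2)\setminus\{x\}$, and $b\notin C_4$ by the standing assumption. Since $x\notin C_4$, $C_4$ cannot be a subset of $C_1$, for it would then be a proper subset of a circuit. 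So $C_4$ meets $C_3\setminus C_1\subseteq C_2\setminus C_1$ in some element $y$, and $y\ne b$. Now $(C_4,C_2,y)$ is an admissible triple, since $a,y\in C_4$ and $y,b\in C_2$. Because $x\notin C_2$, it satisfies $C_4\cup C_2\subseteq (C_1\cup C_2)\setminus\{x\}$, contradicting the minimality of $|C_1\cup C_2|$. With this step, plain minimization of $|C_1\cup C_2|$ suffices and no tie-breaking is needed.
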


Count matroids are connected in a wide range of cases.

\begin{proposition}\label{prop:count-matroid-is-connected}
    Let $r,q,a_0, a_1, \ldots, a_r$ and $n$ be as in Definition~\ref{def:count-matroid}. Let $p = a_0 + \sum_{i=1}^r a_i \cdot \binom{r}{i}$. Suppose that $p \ge q$ and $\rk(M(n, r, q, \mb{a})) < q \cdot \binom{n}{r}$. Then $M(n, r, q, \mb{a})$ is a connected matroid.
\end{proposition}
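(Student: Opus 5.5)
The plan is to use transitivity of $\xi_N$ (Proposition~\ref{prop:matroid-connectivity-is-trasitive}) to reduce connectivity to two kinds of ``elementary moves'' between elements of $E(\mulg{K_n^r}{q})$. The first kind joins two copies $(e,i),(e,j)$ of the same edge. The second kind joins $(e,i)$ to $(e',i')$ where $e,e'$ are adjacent in a suitable sense, e.g.\ $|e\cap e'|=r-1$. Since the Johnson graph on $\binom{[n]}{r}$ with this adjacency is connected, these moves chain any two elements together. Throughout write $M=M(n,r,q,\mb{a})$ and $L=L_{\mb a}$.

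\textbf{Step 1: copies of one edge.} Note that $L(\{(e,i)\})=p$ for a single element, and $L$ depends only on $\pi(G)$. Hence for any $G$ with $\pi(G)=\{e\}$ we have $L(G)=p\ge q\ge |G|$. It follows that the full fibre $\mulg{\{e\}}{q}$ is independent, of size $q$.

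\textbf{Step 2: a dependent set built from a growing family.} Since $\rk(M)<q\binom nr$, the ground set is dependent. I would then order the edges of $K_n^r$ along a sequence $e_1,e_2,\dots$ in which each $e_k$ meets some earlier edge in $r-1$ vertices (e.g.\ colex order). Let $A_k=\mulg{\{e_1,\dots,e_k\}}{q}$. Then $A_1$ is independent and $A_N$ is dependent, so there is a first $k$ with $A_k$ dependent.

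\textbf{Step 3: locating a circuit.} Adding the fibre of $e_k$ to the independent set $A_{k-1}$ raises the rank by less than $q$. The natural sequence is to add $(e_k,1),\dots,(e_k,q)$ one at a time and take the first element $x$ that lies in the closure of the current set. This $x$ lies in a unique circuit $C$ inside $A_{k-1}\cup\{\text{added copies}\}\cup\{x\}$. Since the fibre of $e_k$ is independent, $C$ must contain elements with underlying edge in $\{e_1,\dots,e_{k-1}\}$.

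\textbf{Step 4: spreading the relation by symmetry.} Symmetry of $L$ under permutations of $[n]$ and of the index set $[q]$ shows that $\xi_M$ is invariant under these actions. So the single circuit $C$ should, by symmetry, give relations between edges meeting in $r-1$ vertices. It should also relate distinct copies of one edge: two copies of $e_k$ that both lie in some circuit, or one obtained after permuting indices.

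\textbf{Main obstacle.} The hardest step is guaranteeing that some circuit contains two elements whose underlying edges are adjacent, or two copies of the same edge, rather than only far-apart pairs. For this I would use Proposition~\ref{prop:cirucit-and-L-count-matroid}: $L(C)=L(C\setminus\{e\})$ for each $e\in C$. This says that deleting any element does not shrink any shadow with $a_i>0$, which forces $C$ to be ``shadow-closed.'' Combined with the symmetry, this should give enough overlapping circuits to link an edge $e$ with every $(r-1)$-neighbour. If this fails, the fallback is to argue by contradiction. I would take the connected components of $M$, each a union of orbit-closed classes. Because $L$ is a function of shadows, it is submodular-additive over components only when their shadows overlap. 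Using $p\ge q$, the rank is then additive across components, and one checks that a nontrivial component split would force every component to be independent. This would give rank $q\binom nr$, contradicting the hypothesis.
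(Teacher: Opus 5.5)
There is a genuine gap, at exactly the point you flag as the main obstacle: nothing in Steps 2--4 produces a circuit containing two elements whose underlying edges share $r-1$ vertices (or two copies of one edge). Step 3 gives a circuit through a copy of $e_k$ and some earlier elements, but these may be far from $e_k$. The symmetry in Step 4 only transports whatever intersection pattern that particular circuit happens to have. If that pattern is, say, disjointness and $n=2r$, the resulting relation on $\binom{[n]}{r}$ is a perfect matching, so it is not connected.

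Your proposed fix via Proposition~\ref{prop:cirucit-and-L-count-matroid} works only when $a_{r-1}>0$. In that case $|\shadow{r-1}(\pi(C\setminus\{x\}))|=|\shadow{r-1}(\pi(C))|$ forces each $(r-1)$-subset of $\pi(x)$ to be covered by another element of $C$. For $a_{r-1}=0$ it fails. Take $r=3$, $q=1$, $a_0=-2$, $a_1=1$, $a_2=a_3=0$, so $p=1$ and $L(G)=|V(\pi(G))|-2$. Any six lines of a Fano plane form a circuit: they have six edges on seven vertices, while every $j\le 5$ of them cover at least $j+2$ points. Yet any two of these edges meet in exactly one vertex. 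So an arbitrary circuit need not contain an adjacent pair; you must choose a good one.

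The fallback does not supply this. Rank is additive over components in every matroid, with no use of $p\ge q$. A nontrivial split also need not make the components independent. For instance, when $n=2r$ you would have to rule out components of the form (fibre of $A$)$\cup$(fibre of $A^c$), which can contain circuits. Excluding them requires a shadow-comparison argument, which your sketch never gives.

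The missing idea, used in the paper, is compression. Start from any circuit $C$ and replace it by a set $G$ with $|G|=|C|$, whose support is $\lcG(r,\lceil |C|/q\rceil)$ and in which all edges but at most one have multiplicity $q$. Since $\lceil |C|/q\rceil\le|\pi(C)|$, Kruskal--Katona gives $L(G)\le L(C)=|C|-1<|G|$. Hence $G$ contains a circuit. Iterate: circuit sizes can only drop, so the process stops at a circuit whose support is left-compressed. By your Step~1 and $p\ge q$, this support has at least two edges. It therefore contains $[r]$ and $[r-1]\cup\{r+1\}$, which share $r-1$ vertices. From there your Step~4 symmetry (vertex permutations together with independent index permutations inside each fibre) puts every pair $(e,i),(e',i')$ with $|e\cap e'|=r-1$ into a common circuit. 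Transitivity along the Johnson graph then finishes the proof as you intended. Same-fibre pairs follow by passing through a neighbouring edge; note $n\ge r+1$, since otherwise the ground set is a single independent fibre.
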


\begin{proof}
    Let $E = E(\mulg{K_n^r}{q})$ and $M = M(n, r, q, \mb{a})$.

    Since $E$ is not independent, there exists a circuit $C$ in the matroid $M$, and since $p \ge 1$, we have $|C| \ge 2$.

    For the circuit $C$, perform the following procedure.

    Choose $G \subseteq E$ such that $|G| = |C|$, $\pi(G)\seq\pi(C)$, $\pi(G) \simeq  \lcG(r, |\pi(G)|)$, and all edges in $\pi(G)$, possibly except one, appear in $G$ with multiplicity $q$. By Theorem~\ref{thrm:kruskal-katona} and Proposition~\ref{prop:cirucit-and-L-count-matroid},
    \[
        L(G) \le L(C) < |C| = |G|,
    \]
    so $G$ is not independent. Take a subset $\wt{C} \subseteq G$ which is a circuit.

    Repeat the procedure, setting $C := \wt{C}$ at each step, until the circuit stabilizes. In the end, we obtain a circuit $\wt{C}$ such that $\pi(\wt{C}) \simeq \lcG(r, |\pi(\wt{C})|)$ and all edges of $\pi(\wt{C})$, except possibly one, occur in $\wt{C}$ with multiplicity $q$.

    Since $p \ge q$, we have $|\pi(\wt{C})| \ge 2$. By the definition of a left-compressed hypergraph, there exist two edges $e_1, e_2 \in \wt{C}$ such that $|\pi(e_1) \cap \pi(e_2)| = r - 1$; for instance, take $\pi(e_1) = [r]$ and $\pi(e_2) = [r+1] \setminus \{r-1\}$. Without loss of generality, assume $\pi(e_1)$ appears in $\wt{C}$ with multiplicity $q$. By the definition of count matroids, every $A \subseteq E$ with $|A| = |\wt{C}|$ and $\pi(A) \simeq \pi(\wt{C})$ is also a circuit.

    Now, fix two distinct elements $e, e' \in E$. We want to show that $\matcon{M}{e}{e'}$.

    If $\pi(e) = \pi(e')$, choose a circuit $A \subseteq E$ such that $|A| = |C|$, $\pi(A) \simeq \pi(\wt{C})$, and $\pi(\wt{e}_1) = \pi(e)$, where $\pi(\wt{e}_1)$ is the preimage in $\pi(A)$ of the edge $\pi(e_1)$ from $\pi(C)$.

    If $\pi(e) \ne \pi(e')$ and $|\pi(e) \cap \pi(e')| = r - 1$, choose a circuit $A \subseteq E$ such that $e, e' \in A$, $|A| = |C|$, $\pi(A) \simeq \pi(\wt{C})$, $\pi(\wt{e}_1) = \pi(e)$, and $\pi(\wt{e}_2) = \pi(e')$, where $\pi(\wt{e}_1)$ and $\pi(\wt{e}_2)$ are the preimages in $\pi(A)$ of $\pi(e_1)$ and $\pi(e_2)$, respectively.

    In the remaining case, where $\pi(e) \ne \pi(e')$ and $|\pi(e) \cap \pi(e')| \le r - 2$, note that any two distinct edges of $E(K_n^r)$ can be connected by a sequence of edges in which each consecutive pair shares $r - 1$ vertices. Hence, there exists a sequence $e = e_0, e_1, \ldots, e_s = e'$ such that
    \[
        \forall i \ge 1 \quad |\pi(e_{i-1}) \cap \pi(e_i)| = r - 1.
    \]
    By the previous case, for all $i \ge 1$ we have $\matcon{M}{e_{i-1}}{e_i}$, and by Proposition~\ref{prop:matroid-connectivity-is-trasitive}, we conclude that $\matcon{M}{e}{e'}$.
\end{proof}

The following theorem determines the rank of a count matroid. In the case of ordinary hypergraphs (rather than multihypergraphs), this was established by Pikhurko~\cite{2001PikhurkoCount}. Our proof strategy is similar, although we use matroid connectivity to encapsulate some of the technical details.

\begin{theorem}\label{thrm:rank-count-matroid}
    Let $r,q,a_0, a_1, \ldots, a_r$ and $n$ be as in Definition~\ref{def:count-matroid}. Let $p = a_0\! +\! \sum_{i=1}^r a_i \cdot \binom{r}{i}$. Then
    \[
        \rk(M(n, r, q, \mb{a})) = \min\left\{ L_{\mb{a}}(\mulg{K_n^r}{q}),\ \min(q, \max(0, p)) \cdot \binom{n}{r} \right\}.
    \]
\end{theorem}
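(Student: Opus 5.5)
We prove the two inequalities separately; write $M=M(n,r,q,\mb{a})$, $E=E(\mulg{K_n^r}{q})$ and $L=L_{\mb{a}}$.

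\emph{Upper bound.} We show $\rk(M)\le L(E)$ and $\rk(M)\le \min(q,\max(0,p))\binom{n}{r}$. The first inequality is Proposition~\ref{prop:count-matroid-bound-via-L} applied to $A=E$, together with the trivial case $\rk(M)=0$. For the second, fix an edge $e\in E(K_n^r)$ and let $A_e$ be its $q$ copies. Any nonempty $C\subseteq A_e$ has $\pi(C)=\{e\}$, so $L(C)=p$. Hence a subset $B\subseteq A_e$ is independent iff $|B|\le p$, and $\rk_M(A_e)=\min(q,\max(0,p))$. Since $E$ is the disjoint union of the sets $A_e$, subadditivity of rank (submodularity applied with $A\cap B=\emptyset$) gives $\rk(M)\le \sum_e \rk_M(A_e)=\min(q,\max(0,p))\binom{n}{r}$.

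\emph{Lower bound.} If $\rk(M)=q\binom{n}{r}$, then $\rk(M)=|E|$ is at least both terms of the minimum, and we are done. If $p\le 0$, the right-hand side is $\le 0$. If $0<p<q$, take $B$ consisting of $p$ copies of every edge. For any nonempty $C\subseteq B$ we need $|C|\le L(C)$. Here $|C|\le p\,|\pi(C)|$, so it suffices to have $L(C)\ge p|\pi(C)|$; this is false in general, so this case needs its own argument. I would reduce it to the case $p\ge q$ via the observation that $M(n,r,q,\mb a)$ restricted to $p$ copies of each edge is $M(n,r,p,\mb a)$. This shows $\rk M(n,r,q,\mb a)\ge\rk M(n,r,p,\mb a)$, while the upper bound for $q$ is at most the upper bound for $p$. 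So it suffices to handle $p\ge q$, with $q$ replaced by $p$ if needed.

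Now assume $p\ge q$ and $\rk(M)<|E|$. Proposition~\ref{prop:count-matroid-is-connected} then says $M$ is connected. The key step is that in a connected matroid, for any circuit-containing set, the rank of the whole ground set can be computed from $L$. Specifically, I would show that $\rk(M)\ge L(E)$ when $M$ is connected and not free. Take a basis $B$ and any $e\notin B$. The fundamental circuit $C\subseteq B\cup\{e\}$ satisfies $L(C)=|C|-1$ by Proposition~\ref{prop:cirucit-and-L-count-matroid}, i.e.\ it is ``tight''. Unions of tight sets sharing an element are tight, by submodularity of $L$ combined with $|X\cup Y|+|X\cap Y|=|X|+|Y|$ and the independence inequality $|X\cap Y|\le L(X\cap Y)$ for nonempty intersections. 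So the sets $C_e\setminus\{e\}\subseteq B$ are tight subsets of $B$. Connectivity means every two elements lie on a common circuit, which lets us chain these tight sets through shared elements. Their union is therefore a tight subset $T\subseteq B$ with $\pi$-closure covering all of $E$, giving $|B|\ge|T|=L(T)=L(B\cup\ldots)=L(E)$. The last equality requires that $\pi(T)$ has the same shadows as $K_n^r$, which holds because $T$ spans every edge's copies.

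I expect the main obstacle to be this chaining argument. One must ensure that the union of all the $C_e\setminus\{e\}$ really is one tight set; this needs the tight sets to overlap in nonempty sets, and that is exactly where connectivity is used. One must also handle the degenerate parts carefully: the case $0<p<q$ and the possible failure of the independence inequality for empty intersections, which is where $a_0$ enters.
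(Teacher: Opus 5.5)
Your upper bound (subadditivity over the parallel classes $A_e$) and your reduction of the case $0<p<q$ to $p\ge q$ are fine. For the main case you take a different route from the paper: the paper does not fix a basis, but takes a \emph{maximal} non-empty independent set $A$ with $L(A)=|A|$ and proves that it spans. Your route can work, but its decisive step is asserted rather than proved.

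Connectivity only gives, for $b,b'\in B$, \emph{some} circuit $D$ through both. In general $D$ is not a fundamental circuit $C_e\seq B\cup\{e\}$. Although $D\setminus\{d\}$ is tight, it need not lie in $B$, so your union lemma cannot absorb it: that lemma needs $X\cup Y$ independent to get $|X\cup Y|\le L(X\cup Y)$.

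What you need is that the fundamental graph of $B$ is connected. This is the bipartite graph on $B$ and $E\setminus B$, with $b\sim e$ iff $b\in C_e$. The fact is standard and has a short proof. Suppose $E_1$ is the vertex set of a component and $E_2=E\setminus E_1\neq\emptyset$. Then $B\cap E_1$ spans $E_1$ and $B\cap E_2$ spans $E_2$, so $\rk_M(E_1)+\rk_M(E_2)=\rk(M)$. Connectivity supplies a circuit $D$ meeting both sides. Bases of $E_1$ and $E_2$ extending the independent sets $D\cap E_1$ and $D\cap E_2$ would together form a basis of $M$ containing $D$, which is a contradiction.

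Connectedness of this graph also shows that every $b\in B$ lies in some $C_e$, so your $T$ is all of $B$. You need this: otherwise $L(T)=|T|<|B|\le L(E)$ and your chain of equalities fails.

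Alternatively, use the paper's device. Take $x\notin\mathrm{cl}(A)$, $a\in A$, and a circuit $C\ni a,x$. Extend $A$ inside $A\cup(C\setminus\{x\})$ to an independent $\wt{A}$ of full rank, and take a circuit $\wt{C}\ni x$ in $\wt{A}\cup\{x\}$. Then $\wt{C}\setminus\{x\}$ meets $A$ (else $\wt{C}\seq C\setminus\{a\}$) and is not contained in $A$. So $A\cup(\wt{C}\setminus\{x\})$ is a larger tight independent set. This is how the paper turns an arbitrary circuit through two elements into one usable with the union lemma, using only the circuit definition of connectivity.

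The final equality $L(T)=L(E)$ is also misjustified. Spanning in $M$ says nothing directly about $\pi(T)$, and the claim that $\pi(T)$ has the same shadows as $K_n^r$ is false for indices with $a_i=0$ (e.g.\ $i=r$). The correct argument is the paper's closing computation. Once $B$ is tight, for each $e\notin B$, Proposition~\ref{prop:cirucit-and-L-count-matroid} and submodularity give $L(B\cup\{e\})\le L(B)+L(C_e)-L(C_e\setminus\{e\})=L(B)$. Summing the submodular inequalities over $e\in E\setminus B$ then gives $L(E)\le L(B)=|B|$.

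Finally, your ``trivial case $\rk(M)=0$'' does not give $\rk(M)\le L(E)$ when $L(E)<0$ (e.g.\ $a_0=-1$ and all other $a_i=0$). There the stated formula is negative, so this is a defect of the statement itself, which the paper's proof also passes over. It is repaired by replacing $L_{\mb a}(\mulg{K_n^r}{q})$ with $\max\bigl(0,L_{\mb a}(\mulg{K_n^r}{q})\bigr)$.
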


\begin{proof}

    Let $E = E(\mulg{K_n^r}{q})$, $M = M(n, r, q, \mb{a})$, and $L = L_{\mb{a}}$. By Proposition~\ref{prop:count-matroid-bound-via-L}, for every edge $e \in E$,
    \[
        \rk_M(e) \le  \max(0, L(\{e\})) = \max(0, p).
    \]
    If $p \le 0$, then $\rk(M) = 0$, so we may assume $p \ge 1$.

    If $p \le q$, then $L_{\mb{a}}(E) = L_{\mb{a}}(\mulg{K_n^r}{p})$ and for every edge $e \in E$ we have $\rk_M(\mulg{\{e\}}{q}) = p$. It follows that
    \[
        \rk(M) = \rk_M(\mulg{K_n^r}{p}) = \rk(M(n, r, p, \mb{a})).
    \]
    Thus, it suffices to prove the theorem for $p \ge q$.

    Suppose that
    \[
        \rk_M(E) < q \cdot \binom{n}{r}.
    \]
    Since $E$ is not independent, $M$ contains a circuit $C$. Because $p \ge q$, we have $|\pi(\wt{C})| \ge 2$.

    Take a maximal non-empty independent set $A \subseteq E$ such that $L(A) = |A|$. At least one such set exists due to Proposition~\ref{prop:cirucit-and-L-count-matroid} applied to a circuit $C$ and any $e \in C$.

    We claim that $\rk_M(A) = \rk(M)$. Suppose not. Then there exists $x \in E$ such that
    \begin{equation}\label{eq:choose-x-for-A}
        \rk_M(A \cup \{x\}) > \rk_M(A).
    \end{equation}

    Take any $a \in A$. By Proposition~\ref{prop:count-matroid-is-connected}, there exists a circuit $C$ containing both $a$ and $x$. Let $Y = C \setminus \{x\}$. Take an independent set $\wt{A}$ such that $A \subseteq \wt{A} \subseteq A \cup Y$ and $\rk_M(\wt{A}) = \rk_M(A \cup Y)$. Since $\rk_M(\wt{A} \cup \{x\}) = \rk_M(\wt{A})$, there exists a circuit $\wt{C} \subseteq \wt{A} \cup \{x\}$ such that $x \in \wt{C}$. Let $B = \wt{C} \setminus \{x\}$. Then $B$ is independent and $B \not\subseteq A$ by \eqref{eq:choose-x-for-A}. Also, $|B \cap A| \ge 1$, since otherwise $\wt{C} \subseteq Y \cup \{x\} \subseteq C \setminus \{a\}$, contradicting the fact that $C$ is a circuit.

    By Proposition~\ref{prop:count-matroid-bound-via-L}, submodularity of $L$, and Proposition~\ref{prop:cirucit-and-L-count-matroid}, we have
    \begin{align*}
        |A \cup B| \le L(A \cup B)
        &\le L(A \cup \wt{C}) \le L(A) + L(\wt{C}) - L(A \cap B) \\
        &= |A| + |B| - L(A \cap B) \le |A| + |B| - |A \cap B| = |A \cup B|,
    \end{align*}
    where the last inequality uses $|A \cap B| \le L(A \cap B)$, which holds by Proposition~\ref{prop:count-matroid-bound-via-L} since $\rk_M(A \cap B) = |A \cap B| \ge 1$. But then $A \cup B$ is independent and strictly larger than $A$, contradicting the maximality of $A$. Therefore, $\rk_M(A) = \rk(M)$.

    Since $\rk_M(A) = \rk(M)$, for every $e \in E \setminus A$, there exists a circuit $C_e$ such that $C_e \setminus \{e\} \subseteq A$. Using Proposition~\ref{prop:cirucit-and-L-count-matroid}, Proposition~\ref{prop:count-matroid-bound-via-L}, and submodularity of $L$,
    \[
        L(A \cup \{e\}) \le L(A) + L(C_e) - L(A \cap C_e) \le L(A) + |C_e \setminus \{e\}| - |A \cap C_e| = L(A).
    \]
    By submodularity of $L$,
    \[
        L(E) + (|E \setminus A| - 1) \cdot L(A) \le \sum_{e \in E \setminus A} L(A \cup \{e\}) = |E \setminus A| \cdot L(A).
    \]
    Hence,
    \[
        \rk(M) = \rk_M(A) = L(A) = L(E).
    \]
\end{proof}

\subsection{Count Polymatroids}

Count polymatroids generalize count matroids by allowing rational coefficients in $L_{\mb{a}}$.

\begin{proposition}\label{prop:count-polymatroids}
    Let $r \ge 1$ and $n \ge 1$ be integers. Let $a_0, a_1, \ldots, a_r$ be rational numbers such that for all $i\ge 1$ we have $a_i \ge 0$. Let $q \ge 1$ be the smallest positive integer such that \mbox{$\forall i \ge 0:\ q \cdot a_i \in \mathbb{Z}$}. Let $q\cdot \mb{a}$ denote $(q \cdot a_0, \ldots, q \cdot a_r)$ and let $N = M(n, r, q, q\cdot \mb{a})$. Define a function $\rho_M : \pwrs{E(K_n^r)} \to \mathbb{Q}$ by
    \[
        \forall G \subseteq K_n^r \quad \rho_M(G) = \frac{\rk_N(\mulg{G}{q})}{q}.
    \]
    Then $M = (E(K_n^r), \rho_M)$ is a $1$-polymatroid.
\end{proposition}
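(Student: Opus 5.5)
The plan is to verify the three axioms \eqref{eq:poly-boundness}, \eqref{eq:poly-monotone}, \eqref{eq:poly-submodular} for $\rho_M$ directly. Each one will be inherited from the corresponding matroid axiom for $\rk_N$, using the map $G \mapsto \mulg{G}{q}$ from subsets of $E(K_n^r)$ to subsets of $E(\mulg{K_n^r}{q})$.

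First I would check that $N$ is well defined. By the choice of $q$, all coefficients $q\cdot a_i$ are integers, and $q a_i \ge 0$ for $i\ge 1$. Hence Definition~\ref{def:count-matroid} applies: Proposition~\ref{prop:L-a-is-submodular} gives that $L_{q\cdot\mb{a}}$ is integer-valued, non-decreasing and submodular, and Proposition~\ref{prop:submodular-to-matroid} produces the matroid $N$. For $G=\emptyset$ I would set $\rho_M(\emptyset)=0$, which agrees with $\rk_N(\emptyset)=0$ (the empty set being vacuously independent).

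The key observation is that $G\mapsto \mulg{G}{q}$ is a lattice homomorphism with respect to union and intersection. That is, $\mulg{A\cup B}{q}=\mulg{A}{q}\cup\mulg{B}{q}$ and $\mulg{A\cap B}{q}=\mulg{A}{q}\cap\mulg{B}{q}$, and the map is monotone under inclusion. This is immediate, since $(e,i)\in \mulg{G}{q}$ if and only if $e\in G$. With this in hand the axioms follow in turn:
\begin{itemize}
\item \emph{Monotonicity.} $A\subseteq B$ gives $\mulg{A}{q}\subseteq\mulg{B}{q}$, and monotonicity of $\rk_N$ yields \eqref{eq:poly-monotone}.
\item \emph{Submodularity.} Apply submodularity of $\rk_N$ to the sets $\mulg{A}{q}$ and $\mulg{B}{q}$, rewrite the union and intersection using the homomorphism property, and divide by $q>0$. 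This gives \eqref{eq:poly-submodular}.
\item \emph{Boundedness.} From $0\le \rk_N(\mulg{G}{q})\le |\mulg{G}{q}| = q|G|$, dividing by $q$ gives $0\le\rho_M(G)\le |G|$, which is \eqref{eq:poly-boundness}.
\end{itemize}
Since $\rk_N$ is integer-valued, $\rho_M$ takes rational values, as claimed.

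I do not expect a serious obstacle, as the argument is essentially bookkeeping. The only points needing care are the well-definedness of $N$, which rests on the integrality of $q\cdot\mb{a}$, and the treatment of the empty set. Proposition~\ref{prop:submodular-to-matroid} defines $\rk_N$ only on non-empty sets, so one must confirm that the convention $\rk_N(\emptyset)=0$ is compatible with the matroid axioms used above. It is, because the standard rank function of a matroid vanishes on $\emptyset$.
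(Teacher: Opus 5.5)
Your proposal is correct and follows the same route as the paper: each $1$-polymatroid axiom is inherited from the corresponding matroid axiom of $\rk_N$ via the map $G \mapsto \mulg{G}{q}$, which multiplies cardinalities by $q$ and commutes with unions. Your explicit check that this map also commutes with intersections (which the submodularity inequality needs), together with your remarks on why $N$ is well defined and why $\rk_N(\emptyset)=0$, spells out details the paper leaves implicit.
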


\begin{proof}
    Properties \eqref{eq:poly-boundness} and \eqref{eq:poly-monotone} follow easily from the corresponding properties of matroids and the fact that $|E(\mulg{G}{q})| = q \cdot |E(G)|$.

    To establish property~\eqref{eq:poly-submodular}, note that for any hypergraphs $G_1, G_2 \subseteq K_n^r$,
    \[
        E(\mulg{G_1}{q}) \cup E(\mulg{G_2}{q}) = E(\mulg{G_1 \cup G_2}{q}).
    \]
\end{proof}

\begin{definition}\label{def:count-polymatroid}
    Let $r \ge 1$ and $n \ge 1$ be integers. Let $a_0, a_1, \ldots, a_r$ be rational numbers such that for all $i\ge 1$ we have $a_i \ge 0$. Denote by $M(n, r, \mb{a})$ the $1$-polymatroid obtained from Proposition~\ref{prop:count-polymatroids} for these parameters. This $1$-polymatroid is called a \emph{count polymatroid}.
\end{definition}

The following theorem determines the rank of a count polymatroid.
\begin{theorem}\label{thrm:rank-count-poly}
    Let $r,a_0, a_1, \ldots, a_r$ and $n$ be as in Definition~\ref{def:count-polymatroid}. Then
    \[
        \rho_{M(n, r, \mb{a})}(K_n^r) = \min\left\{ L_{\mb{a}}(K_n^r),\ \min(1, \max(0, p)) \cdot \binom{n}{r} \right\},
    \]
    where $p = a_0 + \sum_{i=1}^r a_i \cdot \binom{r}{i}$.
\end{theorem}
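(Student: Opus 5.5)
This is a direct reduction to Theorem~\ref{thrm:rank-count-matroid} through the definition in Proposition~\ref{prop:count-polymatroids}. Let $q$ be the smallest positive integer with $q a_i\in\Z$ for all $i$, and let $N=M(n,r,q,q\cdot\mb{a})$. By definition,
\[
\rho_{M(n,r,\mb{a})}(K_n^r)=\frac{\rk_N(\mulg{K_n^r}{q})}{q}=\frac{\rk(N)}{q},
\]
since $\mulg{K_n^r}{q}$ is the whole ground set of $N$. The coefficients $q\cdot\mb{a}$ are integers with $q a_i\ge 0$ for $i\ge1$, so Theorem~\ref{thrm:rank-count-matroid} applies to $N$.

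I then compute the two parameters of that theorem for the vector $q\cdot\mb{a}$. The counting function scales: for every $G$,
\[
L_{q\cdot\mb{a}}(G)=qa_0+\sum_{i=1}^r qa_i\,|\shadow{i}(\pi(G))| = q\,L_{\mb{a}}(\pi(G)).
\]
In particular $L_{q\cdot\mb{a}}(\mulg{K_n^r}{q})=q\,L_{\mb{a}}(K_n^r)$, because $\pi(\mulg{K_n^r}{q})=K_n^r$. Similarly, the parameter playing the role of $p$ for $N$ is $p'=qa_0+\sum_{i} qa_i\binom{r}{i}=qp$. Theorem~\ref{thrm:rank-count-matroid} therefore gives
\[
\rk(N)=\min\Bigl\{q\,L_{\mb{a}}(K_n^r),\ \min(q,\max(0,qp))\binom{n}{r}\Bigr\}.
\]

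Dividing by $q>0$ commutes with $\min$ and $\max$, and
\[
\frac{\min(q,\max(0,qp))}{q}=\min(1,\max(0,p)),
\]
which yields exactly the claimed formula.

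The only points needing care are bookkeeping. First, $L_{\mb{a}}(K_n^r)$ for an ordinary hypergraph is read through $\pi(G)=G$, which matches the convention already set in the paper. Second, the hypotheses of Theorem~\ref{thrm:rank-count-matroid} (integrality and the sign condition on the coefficients) must hold for $q\cdot\mb{a}$, and they follow directly from the choice of $q$. I expect no genuine obstacle: all the combinatorial work is contained in Theorem~\ref{thrm:rank-count-matroid}.
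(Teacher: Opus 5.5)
Your proposal is correct and is the same argument as the paper's: apply Theorem~\ref{thrm:rank-count-matroid} to $N=M(n,r,q,q\cdot\mb{a})$ and divide the resulting rank by $q$. You also write out the rescalings $L_{q\cdot\mb{a}}(\mulg{K_n^r}{q})=q\,L_{\mb{a}}(K_n^r)$ and $p'=qp$, which the paper's one-line proof leaves implicit.
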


\begin{proof}
    This follows immediately from Theorem~\ref{thrm:rank-count-matroid} and the fact that
    \[
        L_{\mb{a}}(\mulg{K_n^r}{q}) = L_{\mb{a}}(\pi(\mulg{K_n^r}{q})) = L_{\mb{a}}(K_n^r).
    \]
\end{proof}

We obtain the following corollary for weak saturation numbers.

\begin{corollary}\label{corollary:wsat-lower-bound-via-count-poly}
    Let $r,a_0, a_1, \ldots, a_r$ and $n$ be as in Definition~\ref{def:count-polymatroid}, and let $H$ be a non-empty $r$-uniform hypergraph. Suppose that $a_0 + \sum_{i=1}^r a_i \cdot \binom{r}{i} \ge 1$ and for every copy $\wt{H} \subseteq K_n^r$ of the hypergraph $H$ the following holds:
    \begin{equation}\label{eq:wsat-condition-count-poly}
        \forall e \in E(\wt{H})\quad \rho_{M(n, r, \mb{a})}(\wt{H} \setminus \{e\}) = \rho_{M(n, r, \mb{a})}(\wt{H}).
    \end{equation}
    Then for all $n \ge |V(H)|$,
    \[
        \wsat(n, H) \ge L_{\mb{a}}(K_n^r).
    \]
\end{corollary}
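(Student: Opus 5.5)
We combine Theorem~\ref{thrm:poly-lower-bound} with the rank formula of Theorem~\ref{thrm:rank-count-poly}. Hypothesis \eqref{eq:wsat-condition-count-poly} says precisely that the count polymatroid $M = M(n,r,\mb{a})$ of Definition~\ref{def:count-polymatroid} is a weakly $H$-saturated $1$-polymatroid on $E(K_n^r)$. (Proposition~\ref{prop:count-polymatroids} guarantees it is a $1$-polymatroid at all.) Theorem~\ref{thrm:poly-lower-bound} therefore gives
\[
\wsat(n,H)\ge \rho_M(K_n^r).
\]
The remaining task is to identify $\rho_M(K_n^r)$ with $L_{\mb{a}}(K_n^r)$.

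By Theorem~\ref{thrm:rank-count-poly}, since $p = a_0+\sum_i a_i\binom{r}{i}\ge 1$, we have $\min(1,\max(0,p))=1$, and so
\[
\rho_M(K_n^r)=\min\Bigl\{L_{\mb{a}}(K_n^r),\ \binom{n}{r}\Bigr\}.
\]
If the minimum is $L_{\mb{a}}(K_n^r)$, we are done. Otherwise $\rho_M(K_n^r)=\binom{n}{r}$. Since $\rho_M(A)\le |A|$ and $\rho_M$ is monotone, this forces $\rho_M(A)=|A|$ for every $A\subseteq E(K_n^r)$: apply submodularity with singletons, so that $\binom{n}{r}=\rho_M(K_n^r)\le \rho_M(A)+|E\setminus A|$.

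In that case take any copy $\wt{H}$ of $H$, which exists since $n\ge |V(H)|$, and any edge $e$ of it, which exists since $H$ is non-empty. Condition \eqref{eq:wsat-condition-count-poly} gives $|E(\wt{H})|-1=|E(\wt{H})|$, a contradiction. Hence the second case cannot occur, and $\wsat(n,H)\ge L_{\mb{a}}(K_n^r)$.

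The only real subtlety is this degenerate case where the polymatroid is ``free''. It is handled by the non-emptiness of $H$ together with $n\ge |V(H)|$, which guarantees that a copy of $H$ exists. Everything else is direct substitution into the earlier theorems.
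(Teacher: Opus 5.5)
Your proposal is correct and follows the paper's proof: combine Theorem~\ref{thrm:poly-lower-bound} with the rank formula of Theorem~\ref{thrm:rank-count-poly}, where $p\ge 1$ makes the second term equal to $\binom{n}{r}$, and then rule out the degenerate case. The only difference is in that last step: the paper dismisses $\rho_M(K_n^r)=\binom{n}{r}$ using the trivial bound $\wsat(n,H)<\binom{n}{r}$ for $n\ge |V(H)|$, whereas you show the polymatroid would then be free and violate \eqref{eq:wsat-condition-count-poly} on a copy of $H$; both arguments are valid.
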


\begin{proof}
    From Theorem~\ref{thrm:poly-lower-bound} and Theorem~\ref{thrm:rank-count-poly}, we have
    \[
        \wsat(n, H) \ge \min\left\{ L_{\mb{a}}(K_n^r),\ \binom{n}{r} \right\}.
    \]
    But for $n \ge |V(H)|$, it holds that $\wsat(n, H) < \binom{n}{r}$, so the result follows.
\end{proof}

The following propositions are useful for verifying condition \eqref{eq:wsat-condition-count-poly}.

\begin{proposition}\label{prop:count-poly-less-L}
    Let $r,a_0, a_1, \ldots, a_r$ and $n$ be as in Definition~\ref{def:count-polymatroid}. Then for every $G \subseteq K_n^r$,
    \[
        \rho_{M(n, r, \mb{a})}(G) \le \max(0, L_{\mb{a}}(G)).
    \]
\end{proposition}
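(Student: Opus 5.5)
The plan is to unwind the definitions and reduce to the matroid bound of Proposition~\ref{prop:count-matroid-bound-via-L}. Fix $G \subseteq K_n^r$ and write $N = M(n, r, q, q\cdot\mb{a})$, where $q$ is the smallest positive integer clearing denominators, as in Proposition~\ref{prop:count-polymatroids}. By definition,
\[
\rho_{M(n,r,\mb{a})}(G) = \frac{\rk_N(\mulg{G}{q})}{q}.
\]

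If $G$ is empty, then $\rho(G) \le |E(G)| = 0$ by~\eqref{eq:poly-boundness}, so the bound $\rho(G)\le\max(0,L_{\mb{a}}(G))$ is trivial.

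If $G$ is non-empty, then $\mulg{G}{q}$ is a non-empty subset of $E(\mulg{K_n^r}{q})$. Proposition~\ref{prop:count-matroid-bound-via-L} then gives
\[
\rk_N(\mulg{G}{q}) \le \max\bigl(0, L_{q\cdot\mb{a}}(\mulg{G}{q})\bigr).
\]

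Two computations finish the argument. First, the underlying hypergraph satisfies $\pi(\mulg{G}{q}) = G$, so every shadow $\shadow{i}(\pi(\mulg{G}{q}))$ equals $\shadow{i}(G)$. Second, $L$ is linear in its coefficient vector, so
\[
L_{q\cdot\mb{a}}(\mulg{G}{q}) = q\cdot L_{\mb{a}}(G).
\]
Since $q>0$, we have $\max(0,qx)/q = \max(0,x)$. Dividing the matroid bound by $q$ therefore yields
\[
\rho(G) \le \max\bigl(0, L_{\mb{a}}(G)\bigr).
\]

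There is no real obstacle. The only points needing care are the empty case, where Proposition~\ref{prop:count-matroid-bound-via-L} is stated only for non-empty sets, and the bookkeeping that $L_{\mb{a}}$ evaluated on $G$ coincides with the version defined on multihypergraphs through $\pi$.
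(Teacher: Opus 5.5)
Your proof is correct and takes the same route as the paper. The paper just cites Proposition~\ref{prop:count-matroid-bound-via-L}, and you supply the bookkeeping it omits: the identity $L_{q\cdot\mb{a}}(\mulg{G}{q}) = q\cdot L_{\mb{a}}(G)$ via $\pi(\mulg{G}{q}) = G$, and the separate handling of empty $G$.
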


\begin{proof}
    This follows from Proposition~\ref{prop:count-matroid-bound-via-L}.
\end{proof}

\begin{proposition}\label{prop:count-poly-indep-criterion}
    Let $r,a_0, a_1, \ldots, a_r$ and $n$ be as in Definition~\ref{def:count-polymatroid}. Let $F \subseteq K_n^r$ be a non-empty $r$-uniform hypergraph such that $\rho_{M(n, r, \mb{a})}(F) < |E(F)|$. Then there exists a non-empty $r$-uniform hypergraph $G \subseteq F$ such that
    \[
        L_{\mb{a}}(G) < |E(G)|.
    \]
\end{proposition}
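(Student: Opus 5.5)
We unfold the definition of the count polymatroid and apply the matroid statement, Proposition~\ref{prop:count-matroid-dep-less-L}, to the multihypergraph $\mulg{F}{q}$. Let $q$ and $N = M(n, r, q, q\cdot\mb{a})$ be as in Proposition~\ref{prop:count-polymatroids}. The assumption $\rho_{M(n,r,\mb{a})}(F) < |E(F)|$ then reads
\[
    \rk_N(\mulg{F}{q}) < q\cdot |E(F)| = |E(\mulg{F}{q})|.
\]
Since $F$ is non-empty, $\mulg{F}{q}$ is non-empty. Proposition~\ref{prop:count-matroid-dep-less-L}, applied to the matroid $N$ with the function $L_{q\cdot\mb{a}}$, gives a non-empty $B \subseteq E(\mulg{F}{q})$ with
\[
    |B| > L_{q\cdot\mb{a}}(B) = q\cdot L_{\mb{a}}(\pi(B)).
\]
The equality holds because $L_{q\cdot \mb{a}} = q\cdot L_{\mb{a}}$ and $L$ depends only on the underlying hypergraph $\pi(B)$.

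Take $G = \pi(B)$. It is non-empty, since $B$ is non-empty, and $G \subseteq F$, since $B \subseteq \mulg{F}{q}$. Each edge of $G$ appears in $B$ with multiplicity at most $q$, so $|B| \le q\cdot|E(G)|$. Combining the two inequalities,
\[
    q\cdot L_{\mb{a}}(G) < |B| \le q\cdot |E(G)|,
\]
and dividing by $q>0$ gives $L_{\mb{a}}(G) < |E(G)|$, which is the claim.

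The only step needing care is the passage from the multiset $B$ back to an ordinary subhypergraph of $F$. It works because $L$ is defined through $\pi$, and because the cardinality inequality $|B| \le q\cdot|E(\pi(B))|$ points in the right direction. No connectivity or Kruskal--Katona argument is needed.
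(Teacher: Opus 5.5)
Your proposal is correct and follows the paper's proof essentially step for step. You apply Proposition~\ref{prop:count-matroid-dep-less-L} to $\mulg{F}{q}$, take $G=\pi(B)$, and combine $L_{q\cdot\mb{a}}(B)=q\cdot L_{\mb{a}}(G)$ with $|B|\le q\cdot|E(G)|$, exactly as the paper does.
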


\begin{proof}
    Let $q \ge 1$ be the smallest positive integer such that $\forall i \ge 0:\ q \cdot a_i \in \mathbb{Z}$. Let $N = M(n, r, q, q\cdot \mb{a})$.

    Since $\rk_N(\mulg{F}{q}) < q \cdot |F| = |\mulg{F}{q}|$, then by Proposition~\ref{prop:count-matroid-dep-less-L}, there exists non-empty $\wt{G} \subseteq \mulg{F}{q}$ such that $L_{q\cdot \mb{a}}(\wt{G}) < |E(\wt{G})|$. Let $G = \pi(\wt{G})$. Then
    \[
        q\cdot L_{\mb{a}}(G)=L_{q\cdot \mb{a}}(\pi(\wt{G})) = L_{q\cdot \mb{a}}(\wt{G}) < |E(\wt{G})| \le q \cdot |E(\pi(\wt{G}))| = q\cdot |E(G)|,
    \]
    hence $G$ satisfies the required inequality.
\end{proof}

\section{General Lower bound}\label{sec:gamma-lower-bound-hyper}

In view of Theorem~\ref{thrm:tuza-theta-wsat}, and for the sake of generality, our general bound is of the form $\wsat(n,H) = \Omega(n^{s(H)-1})$. The bound is expressed in terms of the quantity $\gamma_{s,H}$. This quantity serve as the coefficient of $a_{s-1}$ for count polymatroids, and it is chosen so as to make it convenient to verify that the resulting polymatroid is weakly $H$-saturated. A more combinatorial natural definition of this quantity is given in Proposition~\ref{prop:alt-form-gamma}.

\begin{definition}
    Let $r \ge 2$ and $s\ge 2$ be integers. Let $H$ be an $r$-uniform hypergraph $H$ with $s(H) = s$. Define
    \[
        \gamma_{s,H} = \min\left\{\, \frac{|E(H)| - |E(G)| - 1}{|\shadow{s-1}(H)| - |\shadow{s-1}(G)|}\ \middle|\ G \subseteq H,\ E(G) \neq \emptyset,\ |\shadow{s-1}(G)| < |\shadow{s-1}(H)| \,\right\}.
    \]
\end{definition}

\begin{theorem}\label{thrm:gamma-lower-bound-hyper}
    Let $r \ge 2$ and $s\ge 2$ be integers. Let $H$ be an $r$-uniform hypergraph $H$ with $s(H) = s$. Then for all $n \ge |V(H)|$,
    \[
        \wsat(n, H) \ge \gamma_{s,H} \cdot \left( \binom{n}{s-1} - |\shadow{s-1}(H)| \right) + |E(H)| - 1.
    \]
\end{theorem}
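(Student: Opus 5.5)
We will apply Corollary~\ref{corollary:wsat-lower-bound-via-count-poly} to a count polymatroid whose only nonzero shadow coefficient sits at level $s-1$. Write $\gamma=\gamma_{s,H}$; it is rational. Set $a_{s-1}=\gamma$, $a_i=0$ for $i\notin\{0,s-1\}$, and
\[
a_0 = |E(H)|-1-\gamma\,|\shadow{s-1}(H)|.
\]
Then $L_{\mb{a}}(K_n^r)=\gamma\bigl(\binom{n}{s-1}-|\shadow{s-1}(H)|\bigr)+|E(H)|-1$, which is exactly the claimed bound. Moreover $L_{\mb{a}}(\wt H)=|E(H)|-1$ for every copy $\wt H$ of $H$. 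Since $s\le r$, we have $s-1\ge1$, so $a_{s-1}\ge0$ is the only sign condition, and it holds because the minimand in $\gamma$ is nonnegative. Two things remain: the condition $p\ge1$, and the weak saturation condition \eqref{eq:wsat-condition-count-poly}.

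For \eqref{eq:wsat-condition-count-poly}, fix a copy $\wt H$, identified with $H$, and an edge $e\in E(H)$. Since $\rho$ has increments bounded by $1$ and Proposition~\ref{prop:count-poly-less-L} gives $\rho(H)\le |E(H)|-1$, it suffices to show $\rho(H\setminus\{e\})\ge|E(H)|-1=|E(H\setminus\{e\})|$, i.e.\ that $H\setminus\{e\}$ is independent (full rank). By the contrapositive of Proposition~\ref{prop:count-poly-indep-criterion}, it is enough to check $L_{\mb{a}}(G)\ge|E(G)|$ for every non-empty $G\subseteq H\setminus\{e\}$. Expanding,
\[
L_{\mb{a}}(G)=|E(H)|-1-\gamma\bigl(|\shadow{s-1}(H)|-|\shadow{s-1}(G)|\bigr).
\]
There are two cases.
\begin{itemize}
\item If $|\shadow{s-1}(G)|<|\shadow{s-1}(H)|$, the inequality is exactly the definition of $\gamma$ as a minimum.
\item If the shadows are equal, we need $|E(G)|\le|E(H)|-1$, which holds because $G$ misses $e$.
\end{itemize}
This is clean. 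One should double-check that full rank indeed holds for the multiset version, but Proposition~\ref{prop:count-poly-indep-criterion} already handles the reduction.

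The remaining issue is $p=a_0+\gamma\binom{r}{s-1}\ge1$, which I expect to be the main obstacle; it amounts to $L_{\mb{a}}$ of a single edge being at least $1$. Take $G$ to be a single edge $f$ in the definition of $\gamma$. This is allowed, since some $(s-1)$-set lies outside $\shadow{s-1}(f)$ whenever $|E(H)|\ge2$. It yields
\[
\gamma\bigl(|\shadow{s-1}(H)|-\tbinom{r}{s-1}\bigr)\le|E(H)|-2,
\]
which is precisely $p\ge1$. The degenerate case where $H$ has a single edge forces $s=0$, which is excluded; more generally $s\ge2$ forces $\delta_{s-1}(H)\ge2$, so $|E(H)|\ge2$. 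Finally, Corollary~\ref{corollary:wsat-lower-bound-via-count-poly} gives the bound for all $n\ge|V(H)|$.
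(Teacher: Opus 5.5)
Your proposal is correct and matches the paper's proof essentially step for step. It uses the same choice $a_{s-1}=\gamma_{s,H}$ and $a_0=|E(H)|-1-\gamma_{s,H}|\shadow{s-1}(H)|$, the same single-edge argument for $p\ge 1$, and the same two-case check (equal versus strictly smaller $(s-1)$-shadow) via Proposition~\ref{prop:count-poly-indep-criterion}. You also make explicit two points the paper leaves implicit: $\gamma_{s,H}\ge 0$, which makes $a_{s-1}$ admissible, and a single edge is a feasible $G$ in the definition of $\gamma_{s,H}$ because $|E(H)|\ge 2$.
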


\begin{proof}
    Choose $a_0, \ldots, a_r \in \mathbb{Q}$ such that
    \[
        \forall G \subseteq K_n^r \quad L_{\mb{a}}(G) = \gamma_{s,H} \cdot \left( |\shadow{s-1}(G)| - |\shadow{s-1}(H)| \right) + |E(H)| - 1,
    \]
    i.e., set $a_{s-1} = \gamma_{s,H}$, $a_0 = |E(H)| - 1 - \gamma_{s,H} \cdot |\shadow{s-1}(H)|$, and $a_i = 0$ for all other $i$.

    Taking $G = \{e\}$ for some edge $e \in E(H)$ in the definition of $\gamma_{s,H}$ yields:
    \[
        \frac{|E(H)| - 2}{|\shadow{s-1}(H)| - \binom{r}{s-1}} \ge \gamma_{s,H},
    \]
    from which it follows that $a_{s-1} \cdot \binom{r}{s-1} + a_0 \ge 1$.

    By Corollary~\ref{corollary:wsat-lower-bound-via-count-poly}, it suffices to show that for every copy $\wt{H} \subseteq K_n^r$ of $H$ and every edge $e \in E(\wt{H})$, we have
    \[
        \rho_{M(n, r, \mb{a})}(\wt{H} \setminus \{e\}) = \rho_{M(n, r, \mb{a})}(\wt{H}).
    \]

    Fix a copy $\wt{H}$ and an edge $e \in E(\wt{H})$.

    By Proposition~\ref{prop:count-poly-less-L},
    \[
        \rho_{M(n, r, \mb{a})}(\wt{H}) \le L_{\mb{a}}(\wt{H}) = |E(H)| - 1.
    \]

    So it is enough to prove that
    \[
        \rho_{M(n, r, \mb{a})}(\wt{H} \setminus \{e\}) = |E(H)| - 1.
    \]

    Suppose not. Since $s(H) \ge 2$, we have $|E(H)| \ge 2$. Then by Proposition~\ref{prop:count-poly-indep-criterion}, there exists a non-empty $G \subseteq \wt{H} \setminus \{e\}$ such that $|E(G)| > L_{\mb{a}}(G)$. If $|\shadow{s-1}(G)| = |\shadow{s-1}(H)|$, then
    \[
        L_{\mb{a}}(G) = |E(H)| - 1 \ge |E(G)|,
    \]
    which contradicts the choice of $G$. Therefore, $|\shadow{s-1}(G)| < |\shadow{s-1}(H)|$.

    From the definition of $\gamma_{s,H}$, it follows that
    \begin{equation*}
        \gamma_{s,H} \cdot (|\shadow{s-1}(H)| - |\shadow{s-1}(G)|) \le (|E(H)| - 1) - |E(G)|.
    \end{equation*}
    Hence,
    \[
        |E(G)| \le \gamma_{s,H} \cdot (|\shadow{s-1}(G)| - |\shadow{s-1}(H)|) + |E(H)| - 1 = L_{\mb{a}}(G),
    \]
    contradicting the choice of $G$.
\end{proof}

To bound $\gamma_{s,H}$ we need the following identity.

\begin{proposition}\label{prop:alt-form-gamma}
    Let $r \ge 2$ and $s\ge 2$ be integers. Let $H$ be an $r$-uniform hypergraph $H$ with $s(H) = s$. Then
    \begin{multline*}
        \gamma_{s,H} = \min\left\{\, \frac{\left| \left\{ e \in E(H) \mid \binom{e}{s-1} \cap \mc{S} \neq \emptyset \right\} \right| - 1}{|\mc{S}|}\ \ \middle|\right.\\
        \left. \emptyset \neq \mc{S} \subseteq \shadow{s-1}(H),\ |\shadow{s-1}(H) \setminus \mc{S}| \ge \binom{r}{s-1} \,\right\}.
    \end{multline*}
\end{proposition}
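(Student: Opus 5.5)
We prove the identity by matching the two minimization problems through complementation: a subhypergraph $G$ on the $\gamma$ side corresponds to the family $\mc{S}$ of $(s-1)$-sets that $G$ misses. Write $\Gamma$ for the right-hand side. We show $\gamma_{s,H}\le \Gamma$ and $\Gamma\le\gamma_{s,H}$ separately.

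For $\gamma_{s,H}\le\Gamma$, fix an admissible $\mc{S}$. Let $G$ consist of the edges $e\in E(H)$ with $\binom{e}{s-1}\cap\mc{S}=\emptyset$. Then
\[
|E(H)|-|E(G)|=\left|\left\{ e \in E(H) \mid \binom{e}{s-1} \cap \mc{S} \neq \emptyset \right\}\right|,
\]
and $\shadow{s-1}(G)\seq \shadow{s-1}(H)\setminus\mc{S}$, so $|\shadow{s-1}(H)|-|\shadow{s-1}(G)|\ge|\mc{S}|>0$. We still need $E(G)\neq\emptyset$. This is the point where the hypothesis $|\shadow{s-1}(H)\setminus\mc{S}|\ge\binom{r}{s-1}$ should be used, but the hypothesis alone does not obviously give it. If it fails, $G$ is empty and the numerator is $|E(H)|-1$, so we instead pick $G=\{e_0\}$ for an edge $e_0$. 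We then have to compare $\frac{|E(H)|-2}{|\shadow{s-1}(H)|-\binom{r}{s-1}}$ with $\frac{|E(H)|-1}{|\mc{S}|}$. Since $|\mc{S}|\le|\shadow{s-1}(H)|-\binom{r}{s-1}$, the first ratio is at most $\frac{|E(H)|-2}{|\mc{S}|}$, which is smaller, so this case is fine. When $G\ne\emptyset$, the numerator is $|E(H)|-|E(G)|-1$ and the denominator is at least $|\mc{S}|$. Because the numerator is nonnegative (indeed $G\subsetneq H$ or the ratio is $\le 0$ trivially), the $\gamma$-ratio for $G$ is at most the $\mc{S}$-ratio. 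One degenerate case needs checking: if $\shadow{s-1}(G)=\shadow{s-1}(H)$, then $\mc{S}=\emptyset$, which is excluded.

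For $\Gamma\le\gamma_{s,H}$, take a minimizer $G$ of $\gamma_{s,H}$. Set $\mc{S}=\shadow{s-1}(H)\setminus\shadow{s-1}(G)$, which is nonempty. Since $G$ is nonempty, $\shadow{s-1}(H)\setminus\mc{S}=\shadow{s-1}(G)$ has size at least $\binom{r}{s-1}$, so $\mc{S}$ is admissible. Every edge meeting $\mc{S}$ in an $(s-1)$-subset is not in $G$. Hence the number of such edges is at most $|E(H)|-|E(G)|$, and the denominators are equal. So the $\mc{S}$-ratio is at most the $G$-ratio.

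The main obstacle is the first direction when the induced $G$ comes out empty. We handle it as above, via the single-edge comparison. This also requires $|\shadow{s-1}(H)|>\binom{r}{s-1}$, which holds because $s(H)=s$ and $s\le r$ force more than one edge to contribute $(s-1)$-sets. Otherwise some $(s-1)$-set would lie in a unique edge, contradicting Proposition~\ref{prop:sparseness-and-delta} at level $s-1$. We also need $|E(H)|\ge 2$, which holds since $s\ge2$.
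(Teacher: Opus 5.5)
Your proof is correct and follows the paper's argument essentially verbatim. It uses the same complementation $\mathcal{S}=\shadow{s-1}(H)\setminus\shadow{s-1}(G)$ for one direction, the same choice of $G$ as the edges avoiding $\mathcal{S}$ for the other, and the same single-edge fallback when every edge of $H$ meets $\mathcal{S}$. Your appeal to sparseness to get $|\shadow{s-1}(H)|>\binom{r}{s-1}$ is harmless but unnecessary, since this already follows from $\mathcal{S}$ being non-empty with $|\shadow{s-1}(H)\setminus\mathcal{S}|\ge\binom{r}{s-1}$.
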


\begin{proof}
    Denote the right-hand side by $\wt{\gamma}_{s,H}$.

    First we show that $\wt{\gamma}_{s,H} \le \gamma_{s,H}$.
    Suppose the minimum in $\gamma_{s,H}$ is attained at $G \subseteq H$. Let $\mc{S} = \shadow{s-1}(H) \setminus \shadow{s-1}(G)$. Then
    \[
        |\shadow{s-1}(H) \setminus \mc{S}| = |\shadow{s-1}(G)| \ge \binom{r}{s-1}.
    \]
    All edges $e \in E(H)$ for which $\binom{e}{s-1} \cap \mc{S} \neq \emptyset$ must lie in $H \setminus G$; otherwise, $\mc{S}$ would intersect $\shadow{s-1}(G)$, contradicting the definition of $\mc{S}$. Therefore,
    \[
        \frac{|\{ e \in E(H) \mid \binom{e}{s-1} \cap \mc{S} \neq \emptyset \}| - 1}{|\mc{S}|} \le \frac{|E(H \setminus G)| - 1}{|\mc{S}|} = \gamma_{s,H}.
    \]

    Next we show that $\gamma_{s,H} \le \wt{\gamma}_{s,H}$.
    Suppose the minimum in $\wt{\gamma}_{s,H}$ is attained at $\mc{S} \subseteq \shadow{s-1}(H)$. Let $\wt{G} = \{ e \in E(H) \mid \binom{e}{s-1} \cap \mc{S} \neq \emptyset \}$.

    If $\wt{G} = H$, let $G$ consist of a single arbitrary edge of $H$. Since $|\mc{S}| \le |\shadow{s-1}(H)| - \binom{r}{s-1}$, we obtain:
    \[
        \wt{\gamma}_{s,H} = \frac{|E(H)| - 1}{|\mc{S}|} \ge \frac{|E(H)| - 2}{|\shadow{s-1}(H)| - \binom{r}{s-1}} = \frac{|E(H)| - |E(G)| - 1}{|\shadow{s-1}(H)| - |\shadow{s-1}(G)|} \ge \gamma_{s,H}.
    \]

    If $\wt{G} \neq H$, take $G = H \setminus \wt{G}$. By definition of $\wt{G}$, we have $\mc{S} \subseteq \shadow{s-1}(H) \setminus \shadow{s-1}(G)$, hence
    \[
        \wt{\gamma}_{s,H} = \frac{|E(\wt{G})| - 1}{|\mc{S}|} \ge \frac{|E(H)| - |E(G)| - 1}{|\shadow{s-1}(H)| - |\shadow{s-1}(G)|} \ge \gamma_{s,H}.
    \]
\end{proof}

\subsection{Connection with general bound for the graph case}\label{subsec:gamma-connection-with-graph-case}

For graphs ($r=2$), the optimal lower bound on $\wsat(n,H)$ in terms of $\delta(H)$ was derived in~\cite{2025TerekhovComb} from a general bound analogous to Theorem~\ref{thrm:gamma-lower-bound-hyper}, but it was proved using combinatorial arguments. This bound was formulated in terms of the following combinatorial quantity, which is closely related to $\gamma_{2,H}$.
\begin{definition}\label{def:gamma-graph}
    Let $H$ be a non-empty graph without isolated vertices. For an integer $0 \le m < |V(H)|$ define
    \begin{equation*}
        \gamma^m_{H}=\min\bigg\{\ \frac{\lbr{\fbr{e\in E(H)\mid e\cap U \neq \emptyset}}-1}{\lbr{U}}\ \bigg|\ \emptyset \neq U \seq V(H),\ \lbr{V(H)\setminus U}\ge m\bigg\}.
    \end{equation*}
\end{definition}
\begin{theorem}[\cite{2025TerekhovComb}]\label{thrm:gamma-lower-bound-graph}
    Let $H$ be a graph with $\delta(H)\ge 2$. Then for all $n\ge \lbr{V(H)}$,
    \[\wsat(n,H)\ge \gamma^1_H\cdot(n-\lbr{V(H)}) + \lbr{E(H)} - 1.\]
\end{theorem}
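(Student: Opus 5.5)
The plan is to derive this bound as a special case of Theorem~\ref{thrm:gamma-lower-bound-hyper} with $r=2$, and then compare $\gamma_{2,H}$ with $\gamma^1_H$ through the identity of Proposition~\ref{prop:alt-form-gamma}. No new polymatroid argument should be needed.

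First, determine the sparseness of $H$. Since $\delta(H)\ge 2$, the graph $H$ has at least two edges, so $s(H)\neq 0$. Every vertex lies in at least two edges, so $\delta_1(H)\ge 2$ and hence $s(H)\neq 1$. Any edge $e$ is the unique edge containing the $2$-set $e$, so $\delta_2(H)=1$. By Proposition~\ref{prop:sparseness-and-delta}, $s(H)=2$. Moreover, $H$ has no isolated vertices, so $\shadow{1}(H)=\binom{V(H)}{1}$, which gives $|\shadow{1}(H)|=|V(H)|$ and $\binom{n}{s-1}=n$. Theorem~\ref{thrm:gamma-lower-bound-hyper} then yields, for all $n\ge|V(H)|$,
\[
\wsat(n,H)\ge \gamma_{2,H}\cdot\br{n-\lbr{V(H)}}+\lbr{E(H)}-1 .
\]

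Next, compare the coefficients. By Proposition~\ref{prop:alt-form-gamma} with $r=s=2$, we may identify singletons $\fbr{v}\in\shadow{1}(H)$ with vertices. Then
\[
\gamma_{2,H}=\min\fbr{\frac{\lbr{\fbr{e\in E(H)\mid e\cap U\neq\emptyset}}-1}{\lbr{U}}\ \middle|\ \emptyset\neq U\seq V(H),\ \lbr{V(H)\setminus U}\ge 2},
\]
because $\binom{e}{1}\cap\mc{S}\neq\emptyset$ means exactly that $e$ meets $U$, and $\binom{r}{s-1}=2$. This is the same quantity as $\gamma^2_H$ in Definition~\ref{def:gamma-graph}. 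The minimum defining $\gamma^1_H$ ranges over the larger family of sets $U$ with $\lbr{V(H)\setminus U}\ge 1$, so $\gamma^1_H\le\gamma^2_H=\gamma_{2,H}$.

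Finally, since $n\ge|V(H)|$, the factor $n-\lbr{V(H)}$ is nonnegative. Replacing $\gamma_{2,H}$ by the smaller $\gamma^1_H$ therefore only weakens the bound, which gives the claimed inequality.

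The only step needing care is the verification that $s(H)=2$ and $\shadow{1}(H)=\binom{V(H)}{1}$, so that the hypergraph theorem applies with the right normalization. The one mild obstacle I expect is matching the constraint $|\shadow{1}(H)\setminus\mc{S}|\ge\binom{r}{s-1}=2$ with $|V(H)\setminus U|\ge 2$, and noting that $\gamma^1_H$ uses the weaker constraint, so the inequality between the two coefficients goes in the needed direction.
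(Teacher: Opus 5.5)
Your proof is correct and matches how this paper itself recovers the statement. The paper uses Proposition~\ref{prop:alt-form-gamma} to get $\gamma_{2,H}=\gamma^2_H$. It then obtains Corollary~\ref{corollary:enhanced-gamma-upper-bound-graph} from Theorem~\ref{thrm:gamma-lower-bound-hyper}, and the stated theorem is its weaker form via $\gamma^1_H\le\gamma^2_H$ and $n-|V(H)|\ge 0$. The theorem is only cited here; its original proof in \cite{2025TerekhovComb} is combinatorial and gives only the coefficient $\gamma^1_H$ in general, whereas your polymatroid route gives the strictly stronger $\gamma^2_H$ for some graphs (e.g.\ $9/4$ versus $8/3$ for $K_5$).
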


Proposition~\ref{prop:alt-form-gamma} implies that $\gamma_{2,H} = \gamma_H^2$. In view of this, we obtain the following strengthening of Theorem~\ref{thrm:gamma-lower-bound-graph}, which shows that the polymatroidal method yields an improvement over the bound obtained by combinatorial arguments.
\begin{corollary}\label{corollary:enhanced-gamma-upper-bound-graph}
    Let $H$ be a graph with $\delta(H) \ge 2$. Then for all $n \ge |V(H)|$,
    \[
        \wsat(n,H) \ge \gamma_H^2 \cdot (n - |V(H)|) + |E(H)| - 1.
    \]
\end{corollary}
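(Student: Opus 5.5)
The plan is to obtain the corollary as the special case $r=s=2$ of Theorem~\ref{thrm:gamma-lower-bound-hyper}, and then identify the quantities appearing there using Proposition~\ref{prop:alt-form-gamma}. I will carry this out in three steps.

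\textbf{Step 1: the sparseness is $s(H)=2$.} I will use Proposition~\ref{prop:sparseness-and-delta}, which gives $s(H)=\min\{m \mid \delta_m(H)=1\}$, and check the three relevant values of $m$.
\begin{itemize}
\item For $m=0$: $\delta_0(H)=|E(H)|$, and $\delta(H)\ge 2$ forces $|E(H)|\ge 2$ (in fact at least $3$), so $\delta_0(H)\neq 1$.
\item For $m=1$: since $\delta(H)\ge 2$, every vertex has degree at least $2$, so $\delta_1(H)=\delta(H)\ge 2$.
\item For $m=2$: any edge $U$ of a simple graph satisfies $|L_H(U)|=1$, so $\delta_2(H)=1$.
\end{itemize}
Hence $s(H)=2$, and Theorem~\ref{thrm:gamma-lower-bound-hyper} applies with $r=s=2$.

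\textbf{Step 2: simplify the bound of Theorem~\ref{thrm:gamma-lower-bound-hyper}.} With $s-1=1$ we have $\binom{n}{s-1}=n$. The $1$-shadow $\shadow{1}(H)$ is the set of non-isolated vertices, and since $\delta(H)\ge 2$ there are no isolated vertices, so $\shadow{1}(H)=V(H)$ (identifying singletons with vertices). The bound therefore reads
\[
\wsat(n,H)\ge \gamma_{2,H}\cdot\br{n-|V(H)|}+|E(H)|-1 \quad\text{for all } n\ge|V(H)|.
\]

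\textbf{Step 3: show $\gamma_{2,H}=\gamma_H^2$.} I will use the alternative form of $\gamma_{2,H}$ from Proposition~\ref{prop:alt-form-gamma}. With $s=2$, a set $\mc{S}$ is a nonempty family of singletons in $\shadow{1}(H)=V(H)$, which I identify with a nonempty vertex set $U\seq V(H)$. Under this identification:
\begin{itemize}
\item the condition $|\shadow{1}(H)\setminus\mc{S}|\ge\binom{2}{1}$ becomes $|V(H)\setminus U|\ge 2$;
\item an edge $e$ satisfies $\binom{e}{1}\cap\mc{S}\ne\emptyset$ exactly when $e\cap U\neq\emptyset$.
\end{itemize}
So the expression in Proposition~\ref{prop:alt-form-gamma} is literally the minimum in Definition~\ref{def:gamma-graph} with $m=2$, and $\gamma_{2,H}=\gamma_H^2$. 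Substituting this into the inequality from Step 2 gives the corollary.

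There is no real obstacle here. The only points needing care are the bookkeeping in Step 1, confirming that the hypothesis $\delta(H)\ge2$ both rules out $s(H)\le 1$ and guarantees that $\shadow{1}(H)=V(H)$, and the harmless identification of $1$-subsets with vertices in Step 3.
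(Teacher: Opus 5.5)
Your proposal is correct and follows the paper's route: the corollary is Theorem~\ref{thrm:gamma-lower-bound-hyper} specialized to $r=s=2$, with $\gamma_{2,H}=\gamma_H^2$ read off from Proposition~\ref{prop:alt-form-gamma}. Your check that $s(H)=2$ and your observation $\shadow{1}(H)=V(H)$ make explicit the bookkeeping that the paper leaves implicit.
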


A bound of this form was established in~\cite{2025TerekhovComb} for certain special cases, but not in general. In particular, it was not obtained for the clique $K_5$, for which $\gamma^1_{K_5} = \frac{9}{4} < \frac{8}{3} = \gamma^2_{K_5}$.

\section{Lower bound via $\delta^*(H)$}\label{sec:delta-lower-bound-hyper}

\subsection{Bounds on size of the shadow}

To derive Theorem~\ref{thrm:optimal-lower-bound-hyper} from Theorem~\ref{thrm:gamma-lower-bound-hyper}, we need lower bounds on the size of the shadow of an $r$-uniform hypergraph. To obtain these bounds, we will use the following inequalities for binomial coefficients.

\begin{proposition}\label{prop:binom-a-plus-b-ineq}
    Let $a \ge 0$ and $b \ge 0$ be integers. Then
    \[
        \binom{a+b}{a} \ge ab + 1.
    \]
\end{proposition}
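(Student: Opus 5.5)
We want to show $\binom{a+b}{a} \ge ab+1$ for all integers $a,b\ge 0$. The plan is induction on $a+b$ using Pascal's rule, with the boundary cases handled directly.

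\textbf{Base cases.} If $a=0$ or $b=0$, then $\binom{a+b}{a}=1$ and $ab+1=1$, so equality holds.

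\textbf{Inductive step.} Now let $a,b\ge 1$ and assume the inequality for all pairs with smaller sum. Pascal's identity gives
\[
\binom{a+b}{a}=\binom{a+b-1}{a-1}+\binom{a+b-1}{a}.
\]
The first summand is $\binom{(a-1)+b}{a-1}$, which by induction is at least $(a-1)b+1$. For the second summand we only need the weak bound $\binom{a+(b-1)}{a}\ge 1$. Adding,
\[
\binom{a+b}{a}\ge (a-1)b+1+1=ab-b+2.
\]
This falls short of $ab+1$ by $b-1$, so the crude bound on the second term is not enough when $b\ge 2$.

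\textbf{Fixing the step.} Apply the induction hypothesis to both summands instead:
\[
\binom{a+b}{a}\ge \bigl((a-1)b+1\bigr)+\bigl(a(b-1)+1\bigr)=2ab-a-b+2.
\]
Then
\[
(2ab-a-b+2)-(ab+1)=ab-a-b+1=(a-1)(b-1)\ge 0,
\]
since $a,b\ge 1$. This closes the induction.

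The only real obstacle is noticing that the second Pascal term must also be bounded by the induction hypothesis rather than by $1$; after that the argument is routine.

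An alternative direct proof: expand $\binom{a+b}{a}=\prod_{i=1}^{a}\frac{b+i}{i}=\prod_{i=1}^{a}\bigl(1+\frac{b}{i}\bigr)$. For $a\ge 1$, keeping only the $i=1$ factor gives a product of at least $1+b$, which handles $a=1$ but not larger $a$. Pushing this route through would need extra work, so I would go with the induction.
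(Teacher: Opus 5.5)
Your proof is correct and is essentially the paper's argument: induction on $a+b$ via Pascal's rule, with the inductive hypothesis applied to both summands $\binom{a+b-1}{a-1}$ and $\binom{a+b-1}{a}$. The only cosmetic differences are that you skip the paper's WLOG $b\ge a$ and close with $(a-1)(b-1)\ge 0$ rather than $(a-1)b\ge a-1$.
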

\begin{proof}
    Without loss of generality, assume $b \ge a$. If $a = 0$, the inequality holds since both sides equal 1. From now on, assume $b \ge a \ge 1$.

    We prove the inequality by induction on $a + b$. For $a + b = 2$, the inequality holds because both sides equal $2$.

    For $a + b \ge 3$, we have:
    \begin{align*}
        \binom{a + b}{a} &= \binom{a + b - 1}{a} + \binom{a + b - 1}{a - 1} \\
        &\ge a(b - 1) + (a - 1)b + 2 \\
        &\ge a(b - 1) + (a - 1) + 2 = ab + 1.
    \end{align*}
\end{proof}

\begin{proposition}\label{prop:binom-convex-ineq}
    Let $a \ge 1$ and $b \ge 1$ be integers. Let $x$ be an integer with $a + 1 \le x \le a + b$. Then
    \[
        (a + b + 1 - x) \binom{x}{a} \ge ab + 1.
    \]
\end{proposition}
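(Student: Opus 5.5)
The plan is to view $f(x) = (a+b+1-x)\binom{x}{a}$ as a function of the integer $x$ on the interval $[a+1, a+b]$ and show that it attains its minimum at one of the two endpoints. We then check both endpoint values against $ab+1$ using Proposition~\ref{prop:binom-a-plus-b-ineq}.

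\textbf{Endpoints.} At $x = a+b$ we get $f(a+b) = \binom{a+b}{a} \ge ab+1$, directly by Proposition~\ref{prop:binom-a-plus-b-ineq}. At $x = a+1$ we get
\[
f(a+1) = b\binom{a+1}{a} = b(a+1) = ab + b \ge ab+1,
\]
since $b \ge 1$.

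\textbf{No interior minimum.} It remains to show that $f$ has no minimum strictly inside the interval, i.e.\ that $f$ is unimodal: first non-decreasing, then non-increasing. To see this, compute the ratio of consecutive values for $a+1 \le x < a+b$:
\[
\frac{f(x+1)}{f(x)} = \frac{(a+b-x)}{(a+b+1-x)}\cdot\frac{x+1}{x+1-a}.
\]
Then $f(x+1) \ge f(x)$ holds if and only if $(a+b-x)(x+1) \ge (a+b+1-x)(x+1-a)$. Set $c = a+b$ and expand both sides. The $x^2$ terms cancel, leaving a linear condition in $x$:
\[
a(c+1) - (a+1)x \ge 0 .
\]
The left side is decreasing in $x$. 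So there is a threshold $x_0 = a(c+1)/(a+1)$ such that $f$ is non-decreasing for $x < x_0$ and non-increasing for $x \ge x_0$. Hence $f$ is unimodal on the integers, and its minimum over $[a+1, a+b]$ is attained at an endpoint.

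Combining this with the endpoint bounds gives the claim. The only step needing care is the algebra in the ratio comparison; the endpoint checks are immediate. If the ratio computation turns out to be messy, a fallback is to observe that $\log f$ is a sum of concave functions of $x$, namely $\log(a+b+1-x)$ and the terms $\log(x-j)$ for $j=0,\dots,a-1$ minus constants. A discrete concave function attains its minimum over an interval at an endpoint.
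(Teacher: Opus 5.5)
Your proof is correct and follows the same plan as the paper: show that $f$ first rises and then falls on $[a+1,a+b]$, so its minimum is at an endpoint, and then check $f(a+1)=b(a+1)$ and $f(a+b)=\binom{a+b}{a}$ via Proposition~\ref{prop:binom-a-plus-b-ineq}. The only difference is that you get unimodality from the discrete ratio $f(x+1)/f(x)$, whereas the paper extends $f$ to real $x$ and studies the sign of $f'$.

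There is one small arithmetic slip. The expansion $(c-x)(x+1)-(c+1-x)(x+1-a)$ equals $a(c+1)-1-(a+1)x$; you dropped the $-1$. For example, with $a=2$, $b=3$, $x=4$ your condition wrongly predicts $f(5)\ge f(4)$, while in fact $f(5)=10<12=f(4)$. The corrected expression is still linear and decreasing in $x$, so the unimodality conclusion and the rest of your argument are unaffected.
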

\begin{proof}
    For $x \in \R$, define $\binom{x}{a} = \frac{1}{a!} \cdot (x \cdot (x - 1) \cdots (x - a + 1))$.

    We prove the inequality for all real $x$ such that $a + 1 \le x \le a + b$.

    Let $f(x) = (a + b + 1 - x) \binom{x}{a}$.

    Compute the derivative of $f$. It is easy to see that
    \[
        \binom{x}{a}' = \binom{x}{a} \cdot \left( \sum_{i = 0}^{a - 1} \frac{1}{x - i} \right),
    \]
    hence,
    \[
        f'(x) = \binom{x}{a} \cdot \left( \sum_{i = 0}^{a - 1} \frac{a + b + 1 - x}{x - i} - 1 \right)
        = \binom{x}{a} \cdot \left( \sum_{i = 0}^{a - 1} \frac{a + b + 1 - i}{x - i} - a - 1 \right).
    \]

    Observe that on the interval $[a, \infty)$, the derivative is continuous and has exactly one zero. Also, $f'(a) > 0$ and $\lim_{x \to \infty} f'(x) < 0$. Therefore, to show that $f(x) \ge ab + 1$ on the interval $[a + 1, a + b] \subseteq [a, \infty)$, it suffices to check that the inequality holds at the boundary points. We have:
    \[
        f(a + 1) = b(a + 1) \ge ab + 1,
    \]
    and by Proposition~\ref{prop:binom-a-plus-b-ineq},
    \[
        f(a + b) = \binom{a + b}{a} \ge ab + 1.
    \]
\end{proof}

Our main bound on the size of the shadow is expressed in terms of the following quantity, which captures the relationship between the size of the shadow and the number of edges.
\begin{definition}\label{def:f-r-delta}
    Let $r \ge 1$ and $\delta \ge 1$ be integers, and let $G$ be an $r$-uniform hypergraph. Define
    \[
        f_{r,\delta}(G) = \delta \cdot \lvert \shadow{r-1}({G}) \rvert - r \cdot \lvert E(G) \rvert
        = \sum_{U \in \shadow{r-1}(G)} (\delta - \lvert L_G(U) \rvert).
    \]
\end{definition}


\begin{proposition}\label{prop:f-r-delta-shadow-bound}
    Let $r \ge 1$, $\delta \ge 2$, $e\ge 1$, $a \ge 1$ be integers, and let $G$ be an $r$-uniform hypergraph with $e$ edges. Suppose that $a \le (r-1)(\delta - 1) + 1$ and
    $e  \le \binom{r + \delta - 1}{r} - a$.
    Then
    \[
        f_{r,\delta}(G) \ge a.
    \]
\end{proposition}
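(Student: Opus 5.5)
The plan is to reduce to left-compressed hypergraphs and then compute. Since $f_{r,\delta}(G)=\delta|\shadow{r-1}(G)|-r|E(G)|$ and $|E(G)|=e$ is fixed, Theorem~\ref{thrm:kruskal-katona} (with $m=r-1$) gives $f_{r,\delta}(G)\ge f_{r,\delta}(\lcG(r,e))$. So it suffices to prove the bound for $G=\lcG(r,e)$. The case $r=1$ is degenerate and handled directly: the shadow is $\{\emptyset\}$, so $f_{1,\delta}(G)=\delta-e\ge a$ exactly when $e\le\delta-a$. Assume $r\ge 2$ from now on.

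Let $k$ be maximal with $\binom{k}{r}\le e$, and write $m=e-\binom{k}{r}$. Then $G$ is $\binom{[k]}{r}$ together with the edges $U\cup\{k+1\}$ for $U\in E(G')$, where $G'=\lcG(r-1,m)$ lives on $[k]$ (and $G'$ is empty when $m=0$). Since $e<\binom{r+\delta-1}{r}$ and $e\ge1$, we have $r\le k\le r+\delta-2$. The $(r-1)$-shadow consists of all of $\binom{[k]}{r-1}$ together with the sets $W\cup\{k+1\}$ for $W\in\shadow{r-2}(G')$. A short computation then gives
\[
f_{r,\delta}(G)=\binom{k}{r-1}(r+\delta-1-k)+\delta\,|\shadow{r-2}(G')|-r\,m .
\]

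I would then split into cases.

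\emph{Case $m=0$.} Here $f_{r,\delta}(G)=(r+\delta-1-k)\binom{k}{r-1}$. Proposition~\ref{prop:binom-convex-ineq} with parameters $r-1,\ \delta-1,\ x=k$ (valid since $r\le k\le r+\delta-2$ and $\delta\ge2$) gives at least $(r-1)(\delta-1)+1\ge a$.

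\emph{Case $m>0$, $k=r+\delta-2$.} Every $(r-2)$-set in $\shadow{r-2}(G')$ has link in $G'$ of size at most $k-r+2=\delta$, so $\delta|\shadow{r-2}(G')|\ge(r-1)m$. Hence $f_{r,\delta}(G)\ge\binom{k}{r-1}-m$. The hypothesis $e\le\binom{k+1}{r}-a$ gives $m\le\binom{k}{r-1}-a$, so $f_{r,\delta}(G)\ge a$.

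\emph{Case $m>0$, $k\le r+\delta-3$.} This is the case I expect to be the main obstacle. Now links in $G'$ have size at most $\delta-1$, so $\delta|\shadow{r-2}(G')|-(r-1)m\ge|\shadow{r-2}(G')|$. This yields
\[
f_{r,\delta}(G)\ge\binom{k}{r-1}(r+\delta-1-k)-m+|\shadow{r-2}(G')|.
\]
Using only $m\le\binom{k}{r-1}-1$ loses too much: for instance, at $k=r+\delta-3$ one only gets about $(r-1)(\delta-2)+1$.

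I would first try to exploit the trade-off between $m$ and $|\shadow{r-2}(G')|$, together with the stronger per-set deficiency $\delta-|L_{G'}(W)|$, to bound $\delta|\shadow{r-2}(G')|-rm$ below by its value at the extremes. Concretely, I would apply induction on $r$ to $G'$ (an $(r-1)$-graph whose links are bounded by $k-r+2$), using the present proposition with $\delta$ replaced by $k-r+2$. The aim is to show that, for fixed $k$, the expression is minimized either at $m=0$ or as $m\to\binom{k}{r-1}$. At $m=0$ the previous case applies. As $m\to\binom{k}{r-1}$, $G$ becomes $\binom{[k+1]}{r}$ minus a few edges, and the value is at least $(r+\delta-2-k)\binom{k+1}{r-1}$ plus a positive correction, which is again controlled by Proposition~\ref{prop:binom-convex-ineq} with $x=k+1$.

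If this interpolation is awkward, the fallback is a direct convexity-type argument. Since $G'$ is itself left-compressed, I would unfold the recursion completely, writing $e$ in its cascade (Kruskal--Katona) representation. This expresses $f_{r,\delta}(\lcG(r,e))$ as a sum of terms $(\text{positive factor})\cdot\binom{k_i}{r-i}$, each of which can be bounded by Propositions~\ref{prop:binom-a-plus-b-ineq} and~\ref{prop:binom-convex-ineq}.
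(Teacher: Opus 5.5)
You have a genuine gap: the case $m>0$, $k\le r+\delta-3$ is left unproved. Your reduction to $\lcG(r,e)$, your formula for $f_{r,\delta}(G)$, and the cases $m=0$ and $m>0$, $k=r+\delta-2$ are correct. In the last of these, your link count ($|L_{G'}(W)|\le k-r+2=\delta$ because $G'$ lives on $[k]$) replaces the paper's inductive estimate $f_{r-1,\delta}(G')\ge 0$, and that works.

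In the remaining case you stop at your displayed inequality and say it loses too much. You then offer two plans that you do not carry out, and neither is sound as stated.
\begin{itemize}
\item The claim that the value is minimized at $m=0$ or as $m\to\binom{k}{r-1}$ is unproved, and convexity will not give it. For $r=3$, $\delta=4$, $k=4$, the values $f_{3,4}(\lcG(3,4+m))$ for $m=0,\dots,5$ are $12,17,18,15,16,13$.
\item The cascade fallback fails because the terms are not positive. Writing $e=\sum_i\binom{k_i}{i}$ gives $f_{r,\delta}(\lcG(r,e))=\sum_i\bigl(\delta\binom{k_i}{i-1}-r\binom{k_i}{i}\bigr)$. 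For $e=9=\binom{4}{3}+\binom{3}{2}+\binom{2}{1}$ (same $r,\delta$), the last term is $4-6=-2$.
\end{itemize}

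The missing step is trivial, and your own inequality already suffices. Your assessment ignored the term $|\shadow{r-2}(G')|$. Since $G'$ has at least one edge, an $(r-1)$-set with $r-1$ subsets of size $r-2$, we have $|\shadow{r-2}(G')|\ge r-1$. Together with $m\le\binom{k}{r-1}-1$ this gives
\[
f_{r,\delta}(G)\ge (r+\delta-2-k)\binom{k}{r-1}+r\ge (r-1)(\delta-2)+1+r=(r-1)(\delta-1)+2 .
\]
The second inequality is Proposition~\ref{prop:binom-convex-ineq} with parameters $r-1$, $\delta-2$, $x=k$. It applies because $r\le k\le r+\delta-3$ forces $\delta\ge 3$.

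With this fix your proof is complete, and after the Kruskal--Katona reduction it uses no induction at all. The paper instead inducts on $r$. In this case it applies the proposition to $G'$ with the same $\delta$ and $a'=(r-2)(\delta-1)+1$, checking $m\le\binom{r+\delta-2}{r-1}-a'$ via Proposition~\ref{prop:binom-a-plus-b-ineq}. That yields $f_{r,\delta}(G)\ge(r+\delta-2-k)\binom{k}{r-1}+(r-2)(\delta-1)+1$, followed by the same convexity estimate. Your route is shorter because the bound $|L_{G'}(W)|\le k-r+2$, which left-compression gives for free, does the work of the paper's induction hypothesis.
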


\begin{proof}
    By Theorem~\ref{thrm:kruskal-katona}, we may assume $G = \lcG(r, e)$.

    We proceed by induction on $r$. For $r = 1$, we have
    \[
        f_{r,\delta}(G) = \delta \cdot \lvert \shadow{0}(G) \rvert - r \cdot e = \delta - e
        = \binom{r + \delta - 1}{r} - e \ge a.
    \]

    Suppose $r \ge 2$. Let $k$ be the largest integer such that $e \ge \binom{k}{r}$. Then $r \le k \le r + \delta - 2$.

    Let $m = e - \binom{k}{r}$. Then $0\le m \le \binom{k}{r - 1} - 1$.

    If $m = 0$, then $E(G) = \binom{[k]}{r}$, because $G$ is left-compressed. By Proposition~\ref{prop:binom-convex-ineq},
    \[
        f_{r,\delta}(G) = f_{r,\delta}\left( \binom{[k]}{r} \right) = (\delta + r - k - 1) \binom{k}{r - 1}
        \ge (r - 1)(\delta - 1) + 1 \ge a.
    \]

    Consider the case $m \ge 1$. Let $F = L_G(\{k + 1\})$. Since $G$ is left-compressed,
    \[
        \shadow{r - 1}(G) = \shadow{r - 1} \left( \binom{[k]}{r} \right) \cup
        \left\{ U \cup \{k + 1\} \,\middle|\, U \in \shadow{r - 2}(F) \right\},
    \]
    so
    \begin{equation}\label{eq:recursive-form-for-f-r-delta}
        f_{r,\delta}(G) = \delta \cdot \lvert \shadow{r - 1}(G) \rvert - r \cdot \lvert E(G) \rvert
        = f_{r,\delta}\left( \binom{[k]}{r} \right) + f_{r - 1, \delta}(F) - \lvert F \rvert.
    \end{equation}

    If $k \le r + \delta - 3$, then $\delta \ge 3$. By Proposition~\ref{prop:binom-a-plus-b-ineq},
    \[
        m \le \binom{r + \delta - 3}{r - 1}
        = \binom{r + \delta - 2}{r - 1} - \binom{r + \delta - 3}{r - 2}
        \le \binom{r + \delta - 2}{r - 1} - (r - 2)(\delta - 1) - 1.
    \]
    Then by the inductive hypothesis, $f_{r - 1, \delta}(F) \ge (r - 2)(\delta - 1) + 1$. Hence by \eqref{eq:recursive-form-for-f-r-delta} and Proposition~\ref{prop:binom-convex-ineq},
    \begin{align*}
        f_{r,\delta}(G)
        &\ge (\delta + r - k - 1) \binom{k}{r - 1} + (r - 2)(\delta - 1) + 1 - m \\
        &\ge (\delta + r - k - 2) \binom{k}{r - 1} + (r - 2)(\delta - 1) + 1 \\
        &\ge (r - 1)(\delta - 2) + 1 + (r - 2)(\delta - 1) + 1 \ge (r - 1)(\delta - 1) + 1 \ge a.
    \end{align*}

    Consider the case $k = r + \delta - 2$. By the inductive hypothesis, $f_{r - 1, \delta}(F) \ge 0$.

    Since
    \[
        \binom{r + \delta - 2}{r} + m = e \le \binom{r + \delta - 1}{r} - a,
    \]
    it follows that $m \le \binom{r + \delta - 2}{r - 1} - a$.

    Hence, by \eqref{eq:recursive-form-for-f-r-delta},
    \[
        f_{r,\delta}(G) = \binom{r + \delta - 2}{r - 1} + f_{r - 1, \delta}(F) - m
        \ge \binom{r + \delta - 2}{r - 1} - m \ge a.
    \]
\end{proof}

We also need the following proposition.

\begin{proposition}\label{prop:lower-bound-size-of-shadow-via-delta-star}
    Let $r \ge 1$ and $\delta \ge 1$ be integers. Let $H$ be a non-empty $r$-uniform hypergraph with $\delta^*(H) \ge \delta$.
    Then
    \[
        \lvert \shadow{r-1}(H) \rvert \ge \binom{r + \delta - 1}{r - 1}.
    \]
\end{proposition}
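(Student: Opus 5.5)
We argue by induction on $r$, via links. For $r=1$ the claim reads $|\shadow{0}(H)|\ge 1$, which holds because $H$ is non-empty and so $\shadow{0}(H)=\{\emptyset\}$. Now let $r\ge 2$. Fix a vertex $v$ lying in some edge of $H$, and consider the $(r-1)$-uniform link hypergraph $F=L_H(\{v\})$, which is non-empty.

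First we check that $\delta^*(F)\ge\delta$. Take an $(r-2)$-set $W\seq V(F)$ with $L_F(W)\neq\emptyset$. Then $L_F(W)=L_H(W\cup\{v\})$, and $W\cup\{v\}$ is an $(r-1)$-set of $V(H)$ with non-empty link. Hence $|L_F(W)|\ge\delta^*(H)\ge\delta$. By the induction hypothesis,
\[
|\shadow{r-2}(F)|\ge\binom{r+\delta-2}{r-2}.
\]
Each $W\in\shadow{r-2}(F)$ yields the $(r-1)$-set $W\cup\{v\}\in\shadow{r-1}(H)$, and these sets are distinct. So the $(r-1)$-shadow sets of $H$ containing $v$ number at least $\binom{r+\delta-2}{r-2}$.

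Next we count the $(r-1)$-shadow sets of $H$ not containing $v$. Pick any $W_0\in\shadow{r-2}(F)$. The set $U_0=W_0\cup\{v\}$ has at least $\delta$ vertices $x_1,\dots,x_\delta$ with $U_0\cup\{x_j\}\in E(H)$. Let $T=W_0\cup\{x_1,\dots,x_\delta\}$, so $|T|=r+\delta-2$ and $v\notin T$. We want to show that every $(r-1)$-subset of $T$ lies in $\shadow{r-1}(H)$; this would give $\binom{r+\delta-2}{r-1}$ further sets, none containing $v$. Adding the two counts and using Pascal's rule gives
\[
\binom{r+\delta-2}{r-2}+\binom{r+\delta-2}{r-1}=\binom{r+\delta-1}{r-1},
\]
which is the claimed bound.

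The main obstacle is this second count. Only $(r-1)$-subsets of $T$ of the form $W_0\setminus\{w\}\cup\{x_j\}$ or $W_0$ itself are obviously covered. The subsets that contain several of the $x_j$ need not lie in $\shadow{r-1}(H)$ for this particular choice, so the plan above does not immediately work. The remedy I would try is a cleaner induction that avoids guessing $T$: apply the induction hypothesis to a second link, that of a vertex $u\ne v$ chosen so that the shadow sets through $u$ avoiding $v$ are numerous. Alternatively, use the shadow sets not containing $v$ inside $\shadow{r-1}(H-v)$, where $H-v$ is the hypergraph of edges avoiding $v$, and bound this by induction on $|V(H)|$ with the same $r$. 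This works if $H-v$ still has $\delta^*\ge\delta$, which can fail.

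The most robust fallback is an extremal argument with $v$ chosen to minimize the number of shadow sets containing it. Double counting gives
\[
(r-1)\,|\shadow{r-1}(H)|=\sum_{v}d(v),
\]
where $d(v)$ is the number of $(r-1)$-shadow sets of $H$ containing $v$. Each $d(v)\ge\binom{r+\delta-2}{r-2}$ by the link argument for non-isolated $v$. Hence
\[
|\shadow{r-1}(H)|\ge\frac{N}{r-1}\binom{r+\delta-2}{r-2},
\]
where $N$ is the number of non-isolated vertices. It remains to show $N\ge r+\delta-1$. This holds because some edge $e$ exists, and an $(r-1)$-subset $U\subset e$ has at least $\delta$ extensions, giving $r-1+\delta$ vertices. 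Then
\[
\frac{r+\delta-1}{r-1}\binom{r+\delta-2}{r-2}=\binom{r+\delta-1}{r-1},
\]
which completes the proof. I would present this double-counting version as the actual proof.
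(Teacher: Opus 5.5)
Your final double-counting argument is correct, and it is a slightly more elementary variant of the paper's proof. The skeleton is the same: induction on $r$ through the vertex links $L_H(\{v\})$, the check $\delta^*(L_H(\{v\}))\ge\delta$, double counting over vertices, and $|V(H)|\ge r-1+\delta$ obtained from the link of a single $(r-1)$-set.

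The difference is what the induction counts. The paper inducts on the number of edges. It proves $|E(H)|\ge\binom{r+\delta-1}{r}$ via $r|E(H)|=\sum_v|L_H(\{v\})|$, and only then converts this into the shadow bound using the Kruskal--Katona theorem. You induct directly on the $(r-1)$-shadow, using the exact identities $d(v)=|\shadow{r-2}(L_H(\{v\}))|$ and $(r-1)|\shadow{r-1}(H)|=\sum_v d(v)$. Your argument is therefore self-contained and needs no Kruskal--Katona.

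Nothing is lost by this. The paper's edge bound follows from yours at once, since $r|E(H)|\ge\delta|\shadow{r-1}(H)|$ and $\frac{\delta}{r}\binom{r+\delta-1}{r-1}=\binom{r+\delta-1}{r}$. The paper's route costs nothing extra there only because Kruskal--Katona is already in use elsewhere in the paper.

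In the write-up, discard the earlier sketches (the set $T$ and the induction on $H-v$), which, as you observed, do not work as stated. Also state explicitly the convention $\shadow{0}(H)=\{\emptyset\}$ used in your base case; the paper tacitly uses the same convention, $|\shadow{0}(G)|=1$, in its proof of the lower bound on $f_{r,\delta}$.
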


\begin{proof}
    Let $e = |E(H)|$. By Theorem~\ref{thrm:kruskal-katona}, it is sufficient to prove that
    \begin{equation}\label{eq:bound-on-e-is-sufficient:prop:lower-bound-size-of-shadow-via-delta-star}
        e \ge \binom{r + \delta - 1}{r},
    \end{equation}
    since then
    \[
        \lvert \shadow{r - 1}(H) \rvert
        \ge \lvert \shadow{r - 1}(\lcG(r, e)) \rvert
        \ge \left\lvert \shadow{r - 1} \left( \binom{[r + \delta - 1]}{r} \right) \right\rvert
        = \binom{r + \delta - 1}{r - 1}.
    \]

    Without loss of generality, we may assume that every vertex $v \in V(H)$ lies in some edge of $H$. 

    We now prove \eqref{eq:bound-on-e-is-sufficient:prop:lower-bound-size-of-shadow-via-delta-star} by induction on $r$. For $r = 1$, the inequality holds since
    \[
        e \ge \delta = \binom{r + \delta - 1}{r}.
    \]

    Suppose $r \ge 2$. Take an arbitrary $U \in \shadow{r - 1}(H)$. Since $|L_H(U)| \ge \delta$, it follows that
    \[
        |V(H)| \ge |U| + |\shadow{1}(L_H(U))| \ge r - 1 + \delta.
    \]

    Fix a vertex $v \in V(H)$. It is easy to show that $\delta^*(L_H(\{v\})) \ge \delta$. Then by the induction hypothesis,
    \[
        \lvert L_H(\{v\}) \rvert \ge \binom{r + \delta - 2}{r - 1}.
    \]
    Therefore,
    \[
        e = \frac{1}{r} \sum_{v \in V(H)} \lvert L_H(\{v\}) \rvert
        \ge \frac{|V(H)|}{r} \cdot \binom{r + \delta - 2}{r - 1}
        \ge \frac{r - 1 + \delta}{r} \cdot \binom{r + \delta - 2}{r - 1}
        = \binom{r + \delta - 1}{r}.
    \]
\end{proof}

The following corollary gives a slightly stronger form of the bound from Theorem~\ref{thrm:optimal-lower-bound-hyper}, and this form is more convenient to derive from Theorem~\ref{thrm:gamma-lower-bound-hyper}.

\begin{corollary}\label{collorary:optimal-lower-bound-simplification}
    Let $r \ge 1$ and $\delta \ge 1$ be integers. Let $H$ be a non-empty $r$-uniform hypergraph with $\delta^*(H) = \delta$. Then for all $n \ge \lvert V(H) \rvert$,
    \[
        \left(\frac{\delta}{r} - \frac{1}{\binom{r + \delta - 1}{r - 1}}\right) \cdot \left(\binom{n}{r - 1} - \lvert \shadow{r - 1}(H) \rvert\right) + |E(H)| - 1 \ge \left(\frac{\delta}{r} - \frac{1}{\binom{r + \delta - 1}{r - 1}}\right) \cdot \binom{n}{r - 1}.
    \]
\end{corollary}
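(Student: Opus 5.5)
Writing $c=\frac{\delta}{r}-\frac{1}{\binom{r+\delta-1}{r-1}}$, the statement is equivalent, after cancelling the common term $c\binom{n}{r-1}$, to
\[
|E(H)|-1 \ge c\cdot|\shadow{r-1}(H)|.
\]
So the whole task reduces to this inequality between the number of edges and the $(r-1)$-shadow of $H$. It does not depend on $n$.

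If $c\le 0$, the inequality is immediate because $|E(H)|\ge 1$. Now assume $c>0$. Multiplying by $r$, I rewrite it as
\[
r|E(H)| - r \ge \delta|\shadow{r-1}(H)| - \frac{r\,|\shadow{r-1}(H)|}{\binom{r+\delta-1}{r-1}}.
\]
By the identity in Definition~\ref{def:f-r-delta}, this is
\[
f_{r,\delta}(H) \le \frac{r\,|\shadow{r-1}(H)|}{\binom{r+\delta-1}{r-1}} - r.
\]

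The key observation is that $f_{r,\delta}(H)=\sum_{U\in\shadow{r-1}(H)}(\delta-|L_H(U)|)\le 0$. This holds because every $U$ in the $(r-1)$-shadow has nonzero link, so $|L_H(U)|\ge\delta^*(H)=\delta$. Hence it suffices to show that the right-hand side is nonnegative, i.e.
\[
|\shadow{r-1}(H)|\ge\binom{r+\delta-1}{r-1}.
\]
This is exactly Proposition~\ref{prop:lower-bound-size-of-shadow-via-delta-star}, applied with $\delta^*(H)=\delta$.

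The steps, in order:
\begin{enumerate}
\item Reduce the corollary to $|E(H)|-1\ge c\,|\shadow{r-1}(H)|$.
\item Dispose of the case $c\le0$ trivially.
\item Rewrite the inequality via $f_{r,\delta}$ and bound $f_{r,\delta}(H)\le 0$ from the codegree condition.
\item Invoke Proposition~\ref{prop:lower-bound-size-of-shadow-via-delta-star} to show the remaining term is nonnegative.
\end{enumerate}

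The only point needing care is the sign bookkeeping in step 3. The chain is
\[
r|E(H)| \ge \delta|\shadow{r-1}(H)| \ge \delta|\shadow{r-1}(H)| - \frac{r|\shadow{r-1}(H)|}{\binom{r+\delta-1}{r-1}} + r,
\]
where the last inequality holds precisely when $|\shadow{r-1}(H)|\ge \binom{r+\delta-1}{r-1}$. Each piece is short, so I expect no real obstacle.
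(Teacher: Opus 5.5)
Your proof is correct and follows the paper's argument: the paper likewise cancels $c\binom{n}{r-1}$, then combines the double-counting bound $r|E(H)|\ge\delta|\shadow{r-1}(H)|$ with Proposition~\ref{prop:lower-bound-size-of-shadow-via-delta-star} to get $|E(H)|-1\ge c\,|\shadow{r-1}(H)|$. Your separate case $c\le 0$ is harmless but not needed, since your chain of inequalities never uses the sign of $c$.
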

\begin{proof}
    Since each element of $\shadow{r - 1}(H)$ lies in at least $\delta$ edges of $H$, we have
    \[
        r\cdot\lbr{E(H)} = \sum_{U \in \shadow{r - 1}(H)} \lvert L_H(U) \rvert \ge \delta \cdot \lvert \shadow{r - 1}(H) \rvert.
    \]

    By Proposition~\ref{prop:lower-bound-size-of-shadow-via-delta-star}, $\lvert \shadow{r - 1}(H) \rvert \ge \binom{r + \delta - 1}{r - 1}$, hence
    \[
        \left(\frac{\delta}{r} - \frac{1}{\binom{r + \delta - 1}{r - 1}}\right) \cdot \lvert \shadow{r - 1}(H) \rvert \le |E(H)| - 1.
    \]
\end{proof}

\subsection{Proof of Theorem~\ref{thrm:optimal-lower-bound-hyper}}

If $r = 1$, then the bound in Theorem~\ref{thrm:optimal-lower-bound-hyper} follows from Proposition~\ref{prop:wsat-behavior-r-1}.

If $\delta^*(H) = 1$, then the bound in the theorem is trivial.

For $\delta^*(H) \ge 2$, by Proposition~\ref{prop:sparseness-and-delta}, we have $s(H) = r$. Therefore, in view of Corollary~\ref{collorary:optimal-lower-bound-simplification}, to prove Theorem~\ref{thrm:optimal-lower-bound-hyper}, it suffices to establish the following lower bound on $\gamma_{r,H}$.

\begin{theorem}\label{thrm:lower-bound-on-gamma-via-delta}
    Let $r \ge 2$ and $\delta \ge 2$ be integers. Let $H$ be an $r$-uniform hypergraph with $\delta^*(H) = \delta$. Then
    \begin{equation}\label{eq:lower-bound-gamma}
        \gamma_{r,H} \ge \frac{\delta}{r} - \frac{1}{\binom{r + \delta - 1}{r - 1}}.
    \end{equation}
\end{theorem}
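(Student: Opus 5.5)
We work with the definition of $\gamma_{r,H}$ directly. Fix $G \subseteq H$ with $E(G)\neq\emptyset$ and $|\shadow{r-1}(G)|<|\shadow{r-1}(H)|$. Write $G' = H\setminus G$ for the complementary set of edges, and let $\mc{S}=\shadow{r-1}(H)\setminus\shadow{r-1}(G)$, so that $|\mc{S}| = |\shadow{r-1}(H)|-|\shadow{r-1}(G)|$. We must show
\[
|E(H)|-|E(G)|-1 \ge \Bigl(\frac{\delta}{r}-\frac{1}{\binom{r+\delta-1}{r-1}}\Bigr)|\mc{S}|.
\]
Every $U\in\mc{S}$ lies only in edges of $G'$. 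Since $\delta^*(H)=\delta$, each such $U$ has $|L_{G'}(U)| = |L_H(U)|\ge\delta$. Double counting incidences between $E(G')$ and $\shadow{r-1}(G')$ then gives
\[
r|E(G')| = \delta|\shadow{r-1}(G')| - f_{r,\delta}(G'),
\]
and we intend to use this together with the fact that the $(r-1)$-sets in $\shadow{r-1}(G')\setminus\mc{S}$ (those also in $\shadow{r-1}(G)$) contribute possibly deficient degrees.

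The cleaner route is to split by the size of $|E(G')|$, and we do so in three cases.

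\textbf{Case 1: $|E(G')|\ge \binom{r+\delta-1}{r}$.} Use the weaker estimate $r|E(G')|\ge \delta|\mc{S}|$, which holds because every element of $\mc{S}$ has at least $\delta$ edges of $G'$ through it. This gives $|E(G')|-1 \ge \frac{\delta}{r}|\mc{S}|-1$. We need $1\le |\mc{S}|/\binom{r+\delta-1}{r-1}$, which fails in general, so this inequality alone is not enough. Instead combine:
\[
|E(G')| - 1 \ge \frac{\delta}{r}|\mc{S}| - \frac{|\mc{S}|}{\binom{r+\delta-1}{r-1}}
\]
is needed. Rewrite the requirement as $r(|E(G')|-1)\ge \delta|\mc{S}| - r|\mc{S}|/\binom{r+\delta-1}{r-1}$. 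Since $|\mc{S}|\le|\shadow{r-1}(G')|$, it suffices to show $f_{r,\delta}(G')\le r - r|\mc{S}|/\binom{r+\delta-1}{r-1}$, but $f$ can be negative. So the correct accounting is
\[
r|E(G')| \ge \delta|\mc{S}| + \sum_{U\in\shadow{r-1}(G')\setminus\mc{S}}|L_{G'}(U)|.
\]
Then we need a surplus of $r$ from the second sum or from degrees above $\delta$, unless $|\mc{S}|\ge\binom{r+\delta-1}{r-1}$, in which case the deficit $r|\mc S|/\binom{r+\delta-1}{r-1}\ge r$ absorbs the $-r$ directly.

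\textbf{Case 2: $|\mc{S}|<\binom{r+\delta-1}{r-1}$ and $G'$ has an $(r-1)$-subset outside $\mc{S}$.} If $\shadow{r-1}(G')\not\subseteq\mc{S}$, the second sum is at least $1$, but we need a deficit of $r|\mc{S}|/\binom{r+\delta-1}{r-1}$, which is larger. Here Proposition~\ref{prop:f-r-delta-shadow-bound} is the tool. Take $a$ so that, when $|E(G')|\le\binom{r+\delta-1}{r}-a$, we get $f_{r,\delta}(G')\ge a$; this bounds how many sets of $\shadow{r-1}(G')$ can have full degree $\delta$, which forces $\mc{S}$ to be small relative to $E(G')$. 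Concretely, the sets of $\mc{S}$ have degree $\ge\delta$, so $f_{r,\delta}(G')\le\sum_{U\notin\mc{S}}(\delta-|L_{G'}(U)|)\le\delta(|\shadow{r-1}(G')|-|\mc{S}|)$. Combining this with Kruskal--Katona bounds on $|\shadow{r-1}(G')|$ should give $r(|E(G')|-1)\ge \delta|\mc S|-r|\mc S|/\binom{r+\delta-1}{r-1}$, i.e.\ $r|E(G')|\ge|\mc{S}|(\delta - r/\binom{r+\delta-1}{r-1})+r$.

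\textbf{Case 3: $\shadow{r-1}(G')=\mc{S}$.} Then $G'$ is a nonempty hypergraph with all codegrees $\ge\delta$. By the argument of Proposition~\ref{prop:lower-bound-size-of-shadow-via-delta-star}, $|\mc{S}|\ge\binom{r+\delta-1}{r-1}$, so $r|E(G')|\ge\delta|\mc{S}|$ and the deficit already covers the $-r$.

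The main obstacle is Case 2, the intermediate regime where $|E(G')|<\binom{r+\delta-1}{r}$ and $\mc{S}$ is a proper part of $\shadow{r-1}(G')$. There I will set $t=|\shadow{r-1}(G')|-|\mc{S}|\ge1$ and use $r|E(G')|\ge\delta|\mc S|+t$ together with $f_{r,\delta}(G')\ge a$ from Proposition~\ref{prop:f-r-delta-shadow-bound}. Choosing $a$ in terms of $|E(G')|$, and using the constraint $a\le(r-1)(\delta-1)+1$ that $f$-values of cliques also satisfy via Proposition~\ref{prop:binom-convex-ineq}, the aim is to show $r|E(G')|-r \ge |\mc S|(\delta - r/\binom{r+\delta-1}{r-1})$. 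This is a linear optimization in $|\mc S|$ whose extremal configuration is $G'$ being $K^r_{r+\delta-1}$ minus a few edges, and the constant $\binom{r+\delta-1}{r-1}$ matches exactly that configuration.
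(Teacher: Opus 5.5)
Your reduction to the complementary edge set $G'=H\setminus G$ is sound, and so is the easy regime. Whenever $|\mc S|\ge B:=\binom{r+\delta-1}{r-1}$, the estimate $r|E(G')|\ge\delta|\mc S|$ suffices; this covers your Case~3 via Proposition~\ref{prop:lower-bound-size-of-shadow-via-delta-star}. Your Case~1 never uses its hypothesis and stops without a conclusion, but it is easily repaired: if $|\mc S|<B$ and $|E(G')|\ge\binom{r+\delta-1}{r}$, then $|\mc S|(\delta-r/B)\le\delta B-r=r\binom{r+\delta-1}{r}-r\le r|E(G')|-r$.

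The genuine gap is Case~2. It carries the whole difficulty of the theorem, and you leave it at \emph{should give} and \emph{the aim is to show}. The tools you list do not close it as stated:
your estimate $f_{r,\delta}(G')\le\delta t$ is too lossy. The usable bound is $f_{r,\delta}(G')\le(\delta-1)t$, since every set in $\shadow{r-1}(G')\setminus\mc S$ has codegree at least $1$. Kruskal--Katona is already built into Proposition~\ref{prop:f-r-delta-shadow-bound}. Choosing $a$ \emph{in terms of $|E(G')|$} runs into the cap $a\le(r-1)(\delta-1)+1$. What must be tracked instead is the integer slack between $r|E(G')|$ and $\delta|\mc S|$.

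Concretely, put $k=|\mc S|$, $e=|E(G')|$, $t=|\shadow{r-1}(G')\setminus\mc S|$ and $m=re-\delta k\in\Z$. Double counting gives $re\ge\delta k+t$, i.e.\ $m\ge t$. If the bound failed, then $e-1<\delta k/r-k/B$, i.e.\ $m<r(1-k/B)$. Hence $m\le r-1$ and $k<(1-\frac mr)B$, and therefore
\[
e=\frac{\delta k+m}{r}<\Bigl(1-\frac mr\Bigr)\binom{r+\delta-1}{r}+\frac mr\le\binom{r+\delta-1}{r}-m(\delta-1),
\]
using $\binom{r+\delta-1}{r}\ge r(\delta-1)+1$ from Proposition~\ref{prop:binom-a-plus-b-ineq}. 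On the other hand $f_{r,\delta}(G')=\delta(k+t)-re=\delta t-m\le(\delta-1)m$. Proposition~\ref{prop:f-r-delta-shadow-bound} with $a=m(\delta-1)+1\le(r-1)(\delta-1)+1$ now yields $f_{r,\delta}(G')\ge a$, a contradiction.

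This is exactly the paper's argument. The paper phrases it through Proposition~\ref{prop:alt-form-gamma}, with the edges meeting $\mc S$ in place of $G'$, but it runs verbatim for your $G'$. It also makes your case split unnecessary: a violation by itself forces $|\mc S|<B$, the bound $m\le r-1$, and the upper bound on $|E(G')|$ that Proposition~\ref{prop:f-r-delta-shadow-bound} needs. Incidentally, the extremal configuration is $G'=K^r_{r+\delta-1}$ with $\mc S$ its entire $(r-1)$-shadow (equality in your Case~3), not a clique with a few edges removed.
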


\begin{proof}
    Let $\mc{S}$ be a set at which the minimum in Proposition~\ref{prop:alt-form-gamma} is attained. Let $k = \lvert \mc{S} \rvert$, let
    \[
        G = \left\{ e \in E(H) \ \middle|\  \binom{e}{r - 1} \cap \mc{S} \neq \emptyset \right\},
    \]
    and let $e = \lvert E(G) \rvert$. If the bound in \eqref{eq:lower-bound-gamma} is violated, then
    \[
        \frac{e - 1}{k} < \frac{\delta}{r} - \frac{1}{\binom{r + \delta - 1}{r - 1}},
    \]
    which implies
    \begin{equation}\label{eq:upper-e}
        e < 1 + \frac{k\delta}{r} - \frac{k}{\binom{r + \delta - 1}{r - 1}}.
    \end{equation}

    Since each edge contains $r$ elements of $\shadow{r - 1}(G)$ and each element of $\mc{S}$ lies in at least $\delta$ edges of $H$, it follows that $r \cdot e \ge \delta \cdot k$. Combining with \eqref{eq:upper-e} gives
    \begin{equation}\label{eq:e-is-ceil}
        e = \left\lceil \frac{\delta k}{r} \right\rceil = \frac{\delta k + m}{r},
    \end{equation}
    where $m = r \cdot \left\lceil \frac{\delta k}{r} \right\rceil - \delta k$.

    Substituting the expression for $e$ from \eqref{eq:e-is-ceil} into \eqref{eq:upper-e} yields
    \begin{equation}\label{eq:upper-k}
        k < \left(1 - \frac{m}{r} \right) \binom{r + \delta - 1}{r - 1}.
    \end{equation}

    Substituting \eqref{eq:upper-k} into \eqref{eq:e-is-ceil} yields
    \[
        e < \left(1 - \frac{m}{r} \right) \binom{r + \delta - 1}{r} + \frac{m}{r} = \binom{r + \delta - 1}{r} - \frac{m}{r} \left( \binom{r + \delta - 1}{r} - 1 \right).
    \]

    By Proposition~\ref{prop:binom-a-plus-b-ineq}, $\binom{r + \delta - 1}{r} \ge r \cdot (\delta - 1) + 1$, so
    \begin{equation}\label{eq:bound-on-e-in-gamma-bound}
        e < \binom{r+\delta-1}{r}-\frac{m}{r}\br{r\cdot (\delta - 1)} = \binom{r+\delta-1}{r} - m(\delta - 1).
    \end{equation}

    For every $W \in \mc{S}$, we have $E(L_H(W)) = E(L_G(W))$, so
    \[
        r e = \sum_{W \in \shadow{r - 1}(G)} \lvert L_G(W) \rvert \ge \delta \cdot \lvert \mc{S} \rvert + \lvert \shadow{r - 1}(G) \setminus \mc{S} \rvert = \delta k + \lvert \shadow{r - 1}(G) \rvert - k.
    \]

    Since $r e = \delta k + m$, we have \[\lvert \shadow{r - 1}(G) \rvert \le k + m = \frac{r}{\delta} e - \frac{m}{\delta} + m.\] Hence, for the function $f_{r,\delta}$ from Definition~\ref{def:f-r-delta},
    \begin{equation}\label{eq:thrm:lower-bound-gamma:f-r-delta-bound}
        f_{r,\delta}(G) \le m (\delta - 1).
    \end{equation}

    Let $a = m (\delta - 1) + 1$. Then $a \le (r - 1)(\delta - 1) + 1$. By \eqref{eq:bound-on-e-in-gamma-bound}, $e \le \binom{r + \delta - 1}{r} - a$, so Proposition~\ref{prop:f-r-delta-shadow-bound} implies that $f_{r,\delta}(G) \ge a$, contradicting \eqref{eq:thrm:lower-bound-gamma:f-r-delta-bound}.
\end{proof}

\section{Optimality of the lower bound}\label{sec:optimality}

To prove upper bounds on weak saturation, we use the following theorem of R\"odl~\cite{1985Rodl}.

\begin{theorem}[\cite{1985Rodl}]\label{thrm:rodl}
    Let $t \ge 0$ and $k\ge t$ be integers, and let $\epsilon > 0$ be real. Then there exists $N_0(k, t, \epsilon) \ge k$ such that for all sets $X$ with $\lvert X \rvert \ge N_0(k,t,\epsilon)$, there exists a family $\mc{F}_X \subseteq \binom{X}{k}$ of size $\lvert \mc{F}_X \rvert \le (1+\epsilon)\cdot{\binom{\lvert X \rvert}{t}}/{\binom{k}{t}}$ such that every $A \in \binom{X}{t}$ is contained in some $W \in \mc{F}_X$.
\end{theorem}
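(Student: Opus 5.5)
This is Rödl's theorem, the existence of asymptotically optimal coverings, so I would prove it by the Rödl nibble: a semi-random greedy construction followed by a cheap cleanup step. The case $t=0$ is trivial, since a single $k$-set covers the empty set. The case $k=t$ is also trivial, since we can take $\mc{F}_X=\binom{X}{t}$. So I assume $k>t\ge1$.

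I would work in the auxiliary $\binom{k}{t}$-uniform hypergraph $\mc{G}$. Its vertices are the $t$-sets $A\in\binom{X}{t}$, and its edges are the families $\binom{W}{t}$ for $W\in\binom{X}{k}$. A covering by $k$-sets is exactly a cover of $V(\mc{G})$ by edges of $\mc{G}$. $\mc{G}$ has the following properties:
\begin{itemize}
\item It is $D$-regular with $D=\binom{|X|-t}{k-t}$.
\item Any two distinct vertices $A\ne A'$ lie in at most $\binom{|X|-t-1}{k-t-1}=O(D/|X|)=o(D)$ common edges.
\end{itemize}
This is precisely the setting of the Frankl--Rödl / Pippenger--Spencer theorem. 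That theorem states that an almost regular uniform hypergraph with small codegrees has an almost perfect matching, i.e.\ disjoint edges covering all but an $\epsilon'$-fraction of vertices.

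The construction then has two steps. First I take such a matching $\mathcal{M}$ of $k$-sets whose $t$-shadows are pairwise disjoint. Its size is at most $\binom{|X|}{t}/\binom{k}{t}$, and it covers all but at most $\epsilon'\binom{|X|}{t}$ of the $t$-sets. Second, for each uncovered $t$-set $A$, I add one arbitrary $k$-superset of it. The total size is then at most $(1/\binom{k}{t}+\epsilon')\binom{|X|}{t}$. Choosing $\epsilon'=\epsilon/\binom{k}{t}$ gives the bound $(1+\epsilon)\binom{|X|}{t}/\binom{k}{t}$ for $|X|\ge N_0$.

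To be self-contained, the nibble itself runs as follows. In each round, every remaining edge is selected independently with probability $\theta/D_i$, where $D_i$ is the current degree. I keep the selected edges that are disjoint from all other selected edges, delete the vertices they cover, and repeat. The key steps are:
\begin{itemize}
\item Show that after each round the residual hypergraph stays almost regular: degrees concentrate around $D_i e^{-\theta(\binom{k}{t}-1)}$, while the number of vertices shrinks by a factor of about $e^{-\theta}$. This uses expectation computations plus Talagrand or Azuma concentration, with the small codegrees controlling dependencies.
\item Show that the codegrees stay negligible relative to the new degrees.
\item Iterate a bounded number of rounds until the uncovered fraction is below $\epsilon'$.
\end{itemize}

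The main obstacle is the concentration and error-propagation analysis across rounds: the degree errors must compound slowly enough that almost-regularity survives a fixed number $\approx\log(1/\epsilon')/\theta$ of iterations. In writing this up, I would simply cite Pippenger--Spencer (or Rödl's original argument) for the almost-perfect matching. Verifying its hypotheses on $\mc{G}$ then reduces to the two routine counting facts above: exact regularity and codegree $o(D)$.
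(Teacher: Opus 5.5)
The paper gives no proof of this statement. It is imported from R\"odl's 1985 paper and used purely as a black box, in Proposition~\ref{prop:upper-bound-via-G-hyper} and in the appendix. Your argument is a correct and standard derivation.

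The auxiliary $\binom{k}{t}$-uniform hypergraph on $\binom{X}{t}$ is exactly $D$-regular with $D=\binom{|X|-t}{k-t}$. Its codegrees are at most $\binom{|X|-t-1}{k-t-1}=\frac{k-t}{|X|-t}\,D$. So for fixed $k,t$ the almost-perfect-matching theorem applies once $|X|$ is large. The resulting packing has at most $\binom{|X|}{t}/\binom{k}{t}$ blocks. Patching each of the at most $\epsilon'\binom{|X|}{t}$ uncovered $t$-sets with one $k$-superset gives the bound with $\epsilon'=\epsilon/\binom{k}{t}$. The degenerate cases $t=0$ and $k=t$ are also handled correctly.

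Compared with R\"odl's original proof, which runs the nibble directly on this covering problem, your route isolates the only problem-specific input: exact regularity and the $O(D/|X|)$ codegree bound. All the probabilistic error-propagation is delegated to a general theorem. As a bonus, it produces a near-optimal packing (a partial Steiner system), which is stronger than the covering the paper needs.

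Two small precisions. First, the almost-perfect-matching statement you quote is the Frankl--R\"odl (or Pippenger) theorem. Pippenger--Spencer prove that the chromatic index is $(1+o(1))D$, and the matching follows from this by averaging over colour classes. Second, if you want the self-contained nibble, for a covering it is simpler to keep \emph{all} selected edges and delete every vertex they touch, as in Alon--Spencer. Overlaps then inflate the cover only by a factor $1+O(\theta)$ per round, and you patch the final leftover exactly as you do.
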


The following proposition is the main tool for constructing a hypergraph $H$ with the desired bound on $\wsat(n,H)$.

\begin{proposition}\label{prop:upper-bound-via-G-hyper}
    Let $r \ge 1$, $\delta \ge 2$, and $s \ge 2$ be integers. Let $G$ be a non-empty $r$-uniform hypergraph such that $s(G) = s$, $\delta_{s-1}(G) = \delta$. Let $P \subseteq V(G)$ be a set of vertices such that $|P| \ge s-1$ and $\binom{P}{s-1}\seq\shadow{s-1}(G)$. Then there exists a non-empty $r$-uniform hypergraph $H$ such that $\delta_{s-1}(H) = \delta$, $s(H) = s$, and
    \[
        \wsat(n,H) \le \frac{\left\lvert \left\{ e \in E(G)\ \middle|\ |e \cap P| \ge s - 1 \right\} \right\rvert - 1}{\binom{|P|}{s-1}} \cdot \binom{n}{s-1} + o(n^{s-1}).
    \]
\end{proposition}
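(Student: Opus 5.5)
The plan is to build $H$ as a disjoint union $G\sqcup G'$, where $G'$ is an auxiliary hypergraph. By Proposition~\ref{prop:wsat-family-equal-wsat-disjoint} it then suffices to bound $\wsat(n,\{G,G'\})$. Taking a disjoint union with components whose sparseness is at least $s$ and whose $\delta_{s-1}$ is at least $\delta$ gives $s(H)=s$ and $\delta_{s-1}(H)=\delta$. These values are attained by the component $G$, since sparseness and positive link sizes are computed componentwise and then minimized.

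The weakly saturated host $F$ on $n$ vertices is as follows. Fix a set $Z$ of constant size, at least $|V(H)|$, and let $X=V(F)\setminus Z$. Apply Theorem~\ref{thrm:rodl} with $k=|P|$, $t=s-1$ and a small $\epsilon$. This gives a family $\mc{F}_X\subseteq\binom{X}{|P|}$ of size at most $(1+\epsilon)\binom{|X|}{s-1}/\binom{|P|}{s-1}$ that covers every $(s-1)$-subset of $X$. Then:
\begin{itemize}
\item put into $F$ all edges $e$ with $|e\setminus Z|\le s-2$; there are only $O(n^{s-2})$ of them;
\item for each $W\in\mc{F}_X$, fix a bijection $P\to W$ and an injection $V(G)\setminus P\to Z$, giving a copy $G_W$ of $G$;
\item put into $F$ every edge of $G_W$ coming from an edge $e\in E(G)$ with $|e\cap P|\ge s-1$, except one such edge $e_W$.
\end{itemize}
The other edges of $G_W$ satisfy $|e\setminus Z|\le s-2$ and are already present. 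Hence $|E(F)|\le (1+\epsilon)\frac{c-1}{\binom{|P|}{s-1}}\binom{n}{s-1}+O(n^{s-2})$, where $c=|\{e\in E(G)\mid |e\cap P|\ge s-1\}|$. Letting $\epsilon\to0$ slowly gives the stated bound. The hypothesis $\binom{P}{s-1}\subseteq\shadow{s-1}(G)$ guarantees that edges of $G$ actually meet every $(s-1)$-subset of $P$.

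The saturation process runs in three steps:
\begin{itemize}
\item for each $W$, add $e_W$, which completes the copy $G_W$ of $G$ (these additions are independent across different $W$);
\item add every remaining edge $e$ with $|e\setminus Z|= s-1$; since $e\setminus Z$ is covered by some $W$, it suffices to show that every edge $e\subseteq W\cup Z$ with $|e\setminus Z|=s-1$ can be added once $G_W$ is complete;
\item once all edges with $|e\setminus Z|\le s-1$ are present, finish by Proposition~\ref{prop:short-tuza-final-building-step}, since $|Z|\ge|V(H)|-s$.
\end{itemize}

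The main obstacle is the second step, and $G'$ is designed for it. The first thing I would try is to take $G'$ to be the hypergraph on $V(G)\cup\{z\}$, for one new vertex $z$, consisting of $E(G)$ together with all edges of the form $A\cup B$, where $A\in\binom{P}{s-1}$ and $B$ is an $(r-s+1)$-subset of the non-$P$ vertices. The new edges are placed so that each one is the unique edge $U$ witnessing sparseness through its $(s-1)$-set inside $P$. With all other edges already present (those inside $Z$ and those of $G_W$), each new edge then completes a copy of $G'$, and since $|Z|$ is large any placement of $B$ in $Z$ is realized. I would then check two things: that every $(s-1)$-set of $G'$ has empty link or link of size at least $\delta$, which may require padding $G'$ with extra vertices in $Z$-like positions so that links through $P$ stay large; and that $s(G')\ge s$. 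If this particular $G'$ fails the degree condition, the fallback is to enlarge it, for example by replacing $G$ by a few glued copies sharing $P$, until both conditions hold. Neither change affects the edge count of $F$, because every edge of $G'$ beyond those of $G$ is generated rather than placed in $F$.
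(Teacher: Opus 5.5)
Your outer framework matches the paper's: the reduction to a family via Proposition~\ref{prop:wsat-family-equal-wsat-disjoint}, the R\"odl cover, planting all but one of the edges with $|e\cap P|\ge s-1$, the edge count, and the final Tuza step. But your second step, which you rightly call the crux, is not resolved, and the proposed $G'$ fails for a reason unrelated to the degree conditions you planned to check.

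Your $G'$ contains \emph{all} edges $A\cup B$ at once. So completing a copy of $G'$ with a new edge $e$ requires every other edge of that copy to be present already. Under an embedding with $P\mapsto W$ and the remaining vertices sent into $Z$, those other edges $A\cup B$ become edges with exactly $s-1$ vertices outside $Z$. These are precisely the edges step two is supposed to produce, and almost none of them exist yet. So even the first addition is impossible. Your remark that the extra edges of $G'$ are ``generated rather than placed in $F$'' is exactly where the argument becomes circular.

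The sparseness heuristic cannot rescue this. Each $A$ lies in many edges $A\cup B$, and if some $(s-1)$-set lay in a unique edge of $G'$ you would get $s(G')\le s-1$, hence $s(H)\le s-1$. More generally, since $s(G')\ge s$ is forced, the $(s-1)$-set of $G'$ mapped onto $e\setminus Z$ lies in a second edge, whose image also has at least $s-1$ vertices outside $Z$ and must already be present. An ordering argument in the style of Proposition~\ref{prop:short-tuza-final-building-step} therefore cannot get started at this level, which is why that proposition needs $|e\setminus Z|\ge s$. Padding $G'$ or gluing copies of $G$ along $P$ does not address this.

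The missing idea is to put a whole increasing chain of hypergraphs into the family. Then the level-$(s-1)$ edges can be added one at a time, each completing a copy of the next member. The construction is as follows:
\begin{itemize}
\item Let $\wt G=\{e\in E(G)\mid |e\cap P|\ge s-1\}$ and take a set $Q$ of $|V(G)|$ new vertices.
\item Let $H_0$ on $V(G)\cup Q$ consist of $\wt G$ together with all $r$-sets meeting $P$ in at most $s-2$ vertices.
\item Let $e_1,\dots,e_m$ be the non-edges of $H_0$, set $H_i=H_0\cup\{e_1,\dots,e_i\}$, and let $H=G\sqcup\bigsqcup_{i=0}^m H_i$.
\end{itemize}

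Every $(s-1)$-set of $H_0$ has link of size at least $\delta$. For sets inside $P$, the link equals the link in $G$, which is non-empty by $\binom{P}{s-1}\seq\shadow{s-1}(G)$ and hence has size at least $\delta_{s-1}(G)=\delta$. For other sets, the padding supplies at least $\binom{|V(G)|-s+1}{r-s+1}\ge\delta$ edges. Links only grow along the chain, so every set of size at most $s-1$ lies in at least $\delta\ge 2$ edges of each $H_i$. This gives $\delta_{s-1}(H_i)\ge\delta$ and $s(H_i)\ge s$, while the component $G$ pins both parameters of $H$.

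In $F$, take $|Z|=|V(H_0)|-|P|$ and, instead of your copy of $G$, plant $H_0\setminus\{\wt e\}$ on $W\cup Z$ with $P\mapsto W$. The edge count is unchanged, since the extra edges have $|e\setminus Z|\le s-2$. Add the image of $\wt e$ to obtain a copy of $H_0$, then add the images of $e_1,\dots,e_m$ in order, each creating a copy of $H_i$. This fills $W\cup Z$ completely. Proposition~\ref{prop:short-tuza-final-building-step}, applied to the member $G$ (so $|Z|\ge|V(G)|-s$ suffices, rather than your $|Z|\ge|V(H)|$), then finishes as in your third step.
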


\begin{proof}
    Let $\wt{G} = \left\{ e \in E(G)\ \middle|\ |e \cap P| \ge s - 1 \right\}$.

    Let $W$ be a set of size $\lvert W \rvert = \lvert V(G) \rvert$ disjoint from $V(G)$. Consider the $r$-uniform hypergraph $H_0$ with vertex set $V(H_0) = V(G) \cup W$ and edge set
    \[
        E(H_0) = E(\wt{G}) \cup \left\{ e \in \binom{V(H_0)}{r}\ \middle|\ \lvert e \cap P \rvert \le s - 2 \right\}.
    \]

    Let $e_1, \ldots, e_m$ be the missing edges in $H_0$. Define $H_i = H_0 \cup \bigcup_{j=1}^{i} \{e_j\}$. Let $H = G \sqcup \bigsqcup_{i=0}^{m} H_i$. Since $\binom{P}{s-1}\seq \shadow{s-1}(G)\seq \shadow{s-1}(H_0)$, we have $\shadow{s-1}(H_0)=\binom{V(H_0)}{s-1}$ and $\delta_{s-1}(H_0)\ge\delta$. Therefore, $\delta_{s-1}(H_i) \ge \delta$ and $s(H_i) \ge s$ for all $i \ge 0$. Also, $s(G) = s$ and $\delta_{s-1}(G) = \delta$, so $s(H) = s$ and $\delta_{s-1}(H) = \delta$.

    By Proposition~\ref{prop:wsat-family-equal-wsat-disjoint}, it suffices to show that for the family $\mc{H} = \{G\} \cup \{H_i\}_{i=0}^m$, we have
    \[
        \wsat(n,\mc{H}) \le \frac{\left\lvert \left\{ e \in E(G)\ \middle|\ |e \cap P| \ge s - 1 \right\} \right\rvert - 1}{\binom{|P|}{s-1}} \cdot \binom{n}{s-1} + o(n^{s-1}).
    \]

    Fix $n \ge \lvert V(H_0) \rvert$. We will construct a weakly $\mc{H}$-saturated hypergraph on vertex set $[n]$.

    Fix a subset $Z \subseteq [n]$ of size $\lvert Z \rvert = \lvert V(H_0) \rvert - |P| \ge |V(G)|$. By Theorem~\ref{thrm:rodl}, there exists a family $\mc{F} \subseteq \binom{[n] \setminus Z}{\lvert P \rvert}$ such that every subset of $[n] \setminus Z$ of size $s-1$ is contained in some element of $\mc{F}$, and
    \[
        \lvert \mc{F} \rvert \le (1 + o(1)) \cdot \frac{\binom{n - \lvert Z \rvert}{s - 1}}{\binom{\lvert P \rvert}{s - 1}}.
    \]

    Take an arbitrary edge $\wt{e} \in E(\wt{G})$.

    The edge set of $F$ is constructed as follows. Include all edges $e$ such that $\lvert e \setminus Z \rvert \le s - 2$. Next, for each $X \in \mc{F}$, include in $F$ the edges of a copy of the hypergraph $H_0 \setminus \{\wt{e}\}$ on the vertex set $X \cup Z$, where $X$ serves as the preimage of the vertex set $P$. Then the number of edges in $F$ is bounded by
    \[
        \lvert E(F) \rvert \le \lvert \mc{F} \rvert \cdot (|E(\wt G)| - 1) + O(n^{s - 2}) \le \frac{(|E(\wt G)| - 1)}{\binom{\lvert P \rvert}{s - 1}} \cdot \binom{n}{s - 1} + o(n^{s-1}).
    \]

    To complete the proof of the theorem, it remains to show that $F$ is weakly $\mc{H}$-saturated.

    We add the missing edges in the following order. First, for each $X \in \mc{F}$, we add all missing edges inside $X \cup Z$. To do this, we first add the preimage of the edge $\wt{e}$, which creates a copy of $H_0$; then, the remaining edges on $X \cup Z$ can be added using the hypergraphs $\{H_i\}_{i=1}^m$ in increasing order of $i$. Doing this for each $X$ gives a hypergraph $\wt{F}$ that contains all edges $e$ such that $\lvert e \setminus Z \rvert \le s - 1$. To finish, we apply Proposition~\ref{prop:short-tuza-final-building-step} with $H = G$.
\end{proof}

We have the following corollary, which proves Theorem~\ref{thrm:hyper-lower-bound-is-optimal}.

\begin{corollary}\label{corollary:example-H-s-r}
    Let $r \ge 1$ and $\delta \ge 1$ be integers. Then there exists an $r$-uniform hypergraph $H$ with $\delta^*(H) = \delta$ such that
    \[
        \wsat(n,H) \le \left( \frac{\delta}{r} - \frac{1}{\binom{r+\delta - 1}{r - 1}} \right) \binom{n}{r-1} + o(n^{r-1}).
    \]
\end{corollary}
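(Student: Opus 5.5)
The plan is to apply Proposition~\ref{prop:upper-bound-via-G-hyper} with $s=r$, taking $G$ to be a complete hypergraph and $P=V(G)$. The degenerate cases $r=1$ and $\delta=1$ are handled separately.

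\emph{Main case $r\ge 2$, $\delta\ge 2$.} Take $G=K^r_{r+\delta-1}$ on vertex set $P$ with $|P|=r+\delta-1$.
\begin{itemize}
\item Every $(r-1)$-subset of $P$ lies in exactly $\delta$ edges, so $\delta_{r-1}(G)=\delta\ge 2$.
\item Every $r$-subset lies in exactly one edge, so Proposition~\ref{prop:sparseness-and-delta} gives $s(G)=r$. Here I use that for $m\le r-1$ every $m$-subset lies in at least $\delta\ge 2$ edges.
\item Trivially $\binom{P}{r-1}=\shadow{r-1}(G)$ and $|P|\ge r-1$.
\end{itemize}
So all hypotheses of Proposition~\ref{prop:upper-bound-via-G-hyper} hold with $s=r$. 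It yields an $H$ with $\delta^*(H)=\delta_{r-1}(H)=\delta$ and
\[
\wsat(n,H)\le \frac{\binom{r+\delta-1}{r}-1}{\binom{r+\delta-1}{r-1}}\binom{n}{r-1}+o(n^{r-1}).
\]
This uses that every edge $e$ of $G$ satisfies $|e\cap P|=r\ge r-1$, so the counted edge set is all of $E(G)$.

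It remains to simplify the coefficient. From $\binom{N}{r}/\binom{N}{r-1}=(N-r+1)/r$ with $N=r+\delta-1$, we get $\binom{r+\delta-1}{r}/\binom{r+\delta-1}{r-1}=\delta/r$. Hence the coefficient equals $\frac{\delta}{r}-\frac{1}{\binom{r+\delta-1}{r-1}}$, exactly as claimed.

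\emph{Case $\delta=1$, $r\ge 2$.} The coefficient is $\frac1r-\frac1{\binom{r}{r-1}}=0$, so we need $\wsat(n,H)=o(n^{r-1})$. Take $H$ to be a single edge. Then $\delta^*(H)=1$, and the empty hypergraph is weakly $H$-saturated, since each added edge creates a new copy of $H$. Thus $\wsat(n,H)=0$.

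\emph{Case $r=1$.} Here $\binom{n}{0}=1$ and $\binom{\delta}{0}=1$, so the target bound is $(\delta-1)+o(1)$. Take $H$ to be the $1$-uniform hypergraph with $\delta$ edges, for which $\delta^*(H)=\delta$. Proposition~\ref{prop:wsat-behavior-r-1} gives $\wsat(n,H)=\delta-1$ for large $n$.

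The only real work is verifying the hypotheses of Proposition~\ref{prop:upper-bound-via-G-hyper}, mainly $s(G)=r$, which needs $\delta\ge 2$. That is why $\delta=1$ is treated separately.
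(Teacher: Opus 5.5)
Your proposal is correct and matches the paper's proof: the same three cases, with $K^1_\delta$ for $r=1$, a single edge $K^r_r$ for $\delta=1$, and Proposition~\ref{prop:upper-bound-via-G-hyper} applied to $G=K^r_{r+\delta-1}$ with $P=V(G)$ in the main case. You also spell out details the paper leaves implicit: the hypothesis checks (notably $s(G)=r$, which needs $\delta\ge 2$) and the simplification $\binom{r+\delta-1}{r}/\binom{r+\delta-1}{r-1}=\delta/r$.
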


\begin{proof}
    In the case $r = 1$, one can take $H = K_{\delta}^1$. Thus, assume $r \ge 2$.

    In the case $\delta = 1$, one can take $H = K_r^r$. Thus, assume $\delta \ge 2$.

    The existence of the required $H$ follows from Proposition~\ref{prop:upper-bound-via-G-hyper} with $G = K_{r+\delta - 1}^{r}$ and $P = V(G)$.
\end{proof}

We also obtain the following result for arbitrary sparseness.

\begin{corollary}\label{corollary:example-H-s-arbitrary}
    Let $r,s,k$ be integers with $r\ge s\ge 2$ and $k \ge r + 1$. Let $\delta = \binom{k - s + 1}{r - s + 1}$. Then there exists an $r$-uniform hypergraph $H$ such that $s(H) = s$, $\delta_{s-1}(H) = \delta$, and
    \[
        \wsat(n,H) \le \left( \frac{\delta}{\binom{r}{s-1}} - \frac{1}{\binom{k}{s - 1}} \right) \binom{n}{s-1} + o(n^{s-1}).
    \]
\end{corollary}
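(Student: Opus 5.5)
We will apply Proposition~\ref{prop:upper-bound-via-G-hyper} with $G$ taken to be a suitable "$s$-level lift" of a complete hypergraph. Concretely, fix disjoint sets $P$ of size $k$ and $Q$ of size $r-s+1$. Define $G$ on $P\cup Q$, with edge set
\[
E(G)=\left\{\, e\in \binom{P\cup Q}{r}\ \middle|\ |e\cap P|\ge s-1 \,\right\}.
\]
The simpler choice $G=K^r_k$ with $P=V(G)$ gives $\delta_{s-1}=\binom{k-s+1}{r-s+1}=\delta$, $s(G)=r$ rather than $s$, and numerator $\binom{k}{r}-1$. With that choice the coefficient would be
\[
\frac{\binom{k}{r}-1}{\binom{k}{s-1}}=\frac{\delta}{\binom{r}{s-1}}-\frac{1}{\binom{k}{s-1}},
\]
using $\binom{k}{r}\binom{r}{s-1}=\binom{k}{s-1}\binom{k-s+1}{r-s+1}$. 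This is exactly the target coefficient. The only defect is that $s(K^r_k)=r$, not $s$, so we need a modified $G$ that forces sparseness $s$ while keeping both the set $\{e\in E(G): |e\cap P|\ge s-1\}$ and $\delta_{s-1}(G)$ under control.

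The plan is therefore as follows. Take $G=K^r_k\sqcup G'$, where $G'$ is a fixed small $r$-uniform hypergraph with $s(G')=s$ and $\delta_{s-1}(G')\ge\delta$. For $G'$, take $Y\cup\{y_1,\dots,y_{s}\}$-type configurations: start from the complete hypergraph $K^r_N$ with $N$ large, so that $\delta_{s-1}$ is huge. Then remove all edges containing a fixed $s$-set $T$, except one edge $U\supseteq T$. In the result, $T$ lies in exactly one edge, so $s\le s$. For any $j\le s-1$, every $j$-set $J$ in the shadow lies in at least $\binom{N-j}{r-j}-\binom{N-s}{r-s}\ge\delta$ edges when $N$ is large, because $J\not\supseteq T$ as $|J|<s$, so removals cost at most the edges containing $J\cup T$. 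Hence $\delta_j(G')\ge 2$ for $j<s$, which gives $s(G')=s$, and $\delta_{s-1}(G')\ge\delta$.

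Then $\delta_{s-1}(G)=\min(\delta,\delta_{s-1}(G'))=\delta$ and $s(G)=\min(r,s)=s$. Choose $P\subseteq V(K^r_k)$ to be the whole clique vertex set. Then $\binom{P}{s-1}\subseteq\shadow{s-1}(G)$ because $k\ge r$, and the edges meeting $P$ in at least $s-1$ vertices are exactly the $\binom{k}{r}$ clique edges, since the disjoint union keeps $G'$ off $P$. Plugging into Proposition~\ref{prop:upper-bound-via-G-hyper} yields the stated bound, with $H$ satisfying $s(H)=s$ and $\delta_{s-1}(H)=\delta$.

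The main obstacle is checking $\delta_{s-1}(G')\ge\delta$ and $s(G')=s$ carefully, in particular that no $j$-set with $j<s$ has link of size exactly $1$. Each such $j$-set in the shadow loses only edges through $T$, and at least one edge avoiding $T\setminus J$ remains when $N\ge r+s$. The binomial identity above is routine.
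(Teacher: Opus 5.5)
Your proof is correct. It differs from the paper's only in how the hypergraph $G$ fed into Proposition~\ref{prop:upper-bound-via-G-hyper} is built.

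The paper uses one hypergraph on $A\cup B\cup C$ with $|A|=|C|=k$ and $|B|=s$. Its edges are the clique $\binom{A}{r}$, a single edge $B\cup W$ witnessing sparseness $s$, and, as padding, all $r$-sets with $|e\cap A|\le s-2$ and $|e\cap B|\le s-1$. It then takes $P=A$. You instead take $K^r_k\sqcup G'$, with $P$ the clique's vertex set and a separate gadget $G'$. In both versions the edges meeting $P$ in at least $s-1$ vertices are exactly the $\binom{k}{r}$ clique edges. So the coefficient and the identity $\binom{k}{r}\binom{r}{s-1}=\binom{k}{s-1}\binom{k-s+1}{r-s+1}$ come out the same.

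What your decomposition buys is modularity. The parameters $s(\cdot)$ and $\delta_{s-1}(\cdot)$ of a disjoint union are minima over the components; this is the same fact the proof of Proposition~\ref{prop:upper-bound-via-G-hyper} uses when it forms $H$. Hence the clique alone fixes the coefficient, and the gadget only needs $s(G')=s$ and $\delta_{s-1}(G')\ge\delta$. You check both explicitly, and $N\ge k+1$ already gives $\binom{N-s}{r-s+1}\ge\delta$. The paper's single construction interleaves the witness, the clique and the padding, and leaves $s(G)=s$ and $\delta_{s-1}(G)=\delta$ as \emph{easy to verify}. Its only advantage is cosmetic: one hypergraph with complete $(s-1)$-shadow.

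Two cleanups are needed. First, delete your opening definition on $P\cup Q$. Since $|Q|=r-s+1$, every $r$-set meets $P$ in at least $s-1$ vertices, so that $G$ is just $K^r_{k+r-s+1}$, and you never use it. Second, record that $\delta=\binom{k-s+1}{r-s+1}\ge k-s+1\ge 2$, because the proposition requires $\delta\ge 2$.
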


\begin{proof}
    Take disjoint sets $A, B, C$ such that $|A| = k$, $|B| = s$, and $|C| = k$. Choose an arbitrary subset $W \subseteq C$ of size $r - s$. Let $G$ be an $r$-uniform hypergraph on the vertex set $V(G) = A \cup B \cup C$ with edge set
    \[
        E(G) = \binom{A}{r} \cup \{ B \cup W \} \cup \left\{ e \in \binom{V(G)}{r} \ \middle| \ |e \cap A| \le s - 2 \text{ and } |e \cap B| \le s - 1 \right\}.
    \]

    It is easy to verify that $s(G) = s$, $\shadow{s-1}(G) = \binom{V(G)}{s-1}$, and $\delta(G) = \delta$.

    The existence of the required $H$ follows from Proposition~\ref{prop:upper-bound-via-G-hyper} by taking $P = A$.
\end{proof}

\section{Optimal bounds for arbitrary sparseness}\label{sec:arbitrary-sharpness-bounds}

By Theorem~\ref{thrm:tuza-theta-wsat}, it is known that $\wsat(n,H) = \Theta(n^{s(H)-1})$. Therefore, in the case \mbox{$\delta^*(H) = 1$}, it is natural to seek a generalization of the lower bound in Theorem~\ref{thrm:optimal-lower-bound-hyper} that yields a bound of the form $\wsat(n,H) \ge \Omega(n^{s(H)-1})$. In view of Proposition~\ref{prop:lower-bound-via-delta-m}, the following question arises.

\begin{question}\label{qstn:optimal-lower-bound-delta-sharpness}
    Let $r \ge 1$ and $s \ge 2$ be integers. Find a function $f_s: \Nz \to \R$ such that for every $\delta \ge 2$ and every $r$-uniform hypergraph $H$ with $s(H) = s$ and $\delta_{s-1}(H) = \delta$, the following holds:
    \[
        \wsat(n,H) \ge f(\delta)\binom{n}{s-1} + o(n^{s-1}).
    \]
    In addition, for every integer $\delta \ge 2$ and every real $\epsilon > 0$, there should exist an $r$-uniform hypergraph $H$ with $s(H) = s$ and $\delta_{s-1}(H) = \delta$ such that
    \[
        \wsat(n,H) \le (f(\delta) + \epsilon)\binom{n}{s-1} + o(n^{s-1}).
    \]
\end{question}

By Theorem~\ref{thrm:tuza-limit-wsat}, such a function $f_s$ exists and is uniquely defined for $\delta \ge 2$.

We conjecture that using methods similar to those in Section~\ref{sec:gamma-lower-bound-hyper}, the following bound on $\gamma_{s,H}$ can be shown.

\begin{conjecture}
    Let $r \ge 2$, $s \ge 2$, and $k \ge r+1$ be integers. Let $\delta = \binom{k - s + 1}{r - s + 1}$. Let $H$ be an $r$-uniform hypergraph with $s(H) = s$ and $\delta_{s-1}(H) = \delta$. Then
    \[
        \gamma_{s,H} \ge \frac{\delta}{\binom{r}{s-1}} - \frac{1}{\binom{k}{s - 1}}.
    \]
\end{conjecture}

By the example in Corollary~\ref{corollary:example-H-s-arbitrary}, if this conjecture is true, it yields the values of $f_s(\delta)$ from Question~\ref{qstn:optimal-lower-bound-delta-sharpness} for $\delta$ of the form $\binom{k-s+1}{r-s+1}$.

For the lower bound in terms of $\delta_{s-1}(H)$ for $2 \le s \le r - 1$, unlike the bound in terms of $\delta_{r-1}(H)$, we were unable to find a natural upper bound analogous to Proposition~\ref{prop:trivial-upper-bound-hyper}. In this context, we introduce the quantity $\eta(H)$, for which there are both natural lower and upper bounds.

\begin{definition}
    Let $r \ge 1$ and $s \ge 1$ be integers. Let $H$ be a non-empty $r$-uniform hypergraph with $s(H) = s$. By the definition of sparseness, there exists $U \in \shadow{s-1}(H)$ such that $s(L_H(U)) = 1$. From Proposition~\ref{prop:sparseness-and-delta}, for every $U \in \shadow{s-1}(H)$, we have $|L_H(U)| \ge 2$. Therefore, for the family $\mc{H} = \{ L_H(U) \mid U \in \shadow{s-1}(H) \}$, Proposition~\ref{prop:wsat-behavior-s-1} applies. We define $\eta(H) = C_{\mc{H}}$.
\end{definition}

\begin{proposition}\label{prop:bounds-via-eta-H}
    Let $r \ge 1$ and $s \ge 2$ be integers. Let $H$ be a non-empty $r$-uniform hypergraph with $s(H) = s$. Then for all $n \ge |V(H)|$,
    \[
        \frac{\eta(H)}{\binom{r}{s-1}} \cdot \binom{n}{s-1} \le \wsat(n,H) \le \eta(H) \cdot \binom{n}{s-1} + O(n^{s-2}).
    \]
\end{proposition}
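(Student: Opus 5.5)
The lower bound follows the double-counting argument of Proposition~\ref{prop:lower-bound-via-delta-m}, with $\eta(H)$ in place of $\delta-1$. The upper bound builds an explicit weakly $H$-saturated hypergraph: a nearly optimal link construction placed on every $(s-1)$-subset of a large set, completed by Proposition~\ref{prop:short-tuza-final-building-step}.

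\emph{Lower bound.} Let $\mc{H}=\{L_H(U)\mid U\in\shadow{s-1}(H)\}$. By Proposition~\ref{prop:wsat-behavior-s-1}, $\wsat(m,\mc{H})\ge \eta(H)$ for all $m\ge \max|V(L_H(U))|$. Fix $F\in\Wsat(n,H)$ and $T\in\binom{[n]}{s-1}$. We show that the link $F_T=L_F(T)$, an $(r-s+1)$-uniform hypergraph on $[n]\setminus T$, is weakly $\mc{H}$-saturated in $K^{r-s+1}_{n-s+1}$.

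Take the saturating order for $F$ and keep only the added edges $e\supseteq T$. When such an $e$ is added, it creates a copy $\wt H$ of $H$ containing $e$. Let $U\in\binom{V(H)}{s-1}$ be the preimage of $T$; then $U\in\shadow{s-1}(H)$ because $e\supseteq T$. The link $L_{\wt H}(T)$ is a copy of $L_H(U)$ that contains $e\setminus T$, and all its other edges are already present in the current link. So adding $e\setminus T$ creates a new copy of a member of $\mc{H}$, and the restricted order is a valid saturating order for $F_T$. Since $n-s+1\ge |V(H)|-s+1\ge |V(L_H(U))|$, we get $|L_F(T)|\ge \eta(H)$. Double counting as in Proposition~\ref{prop:lower-bound-via-delta-m} gives
\[
\binom{r}{s-1}|E(F)|=\sum_T|L_F(T)|\ge \eta(H)\binom{n}{s-1}.
\]

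\emph{Upper bound.} Take $Z\subseteq[n]$ with $|Z|=|V(H)|$. Choose $m_0$ so that $\wsat(m,\mc{H})=\eta(H)$ for all $m\ge m_0$; this exists because the limit in Proposition~\ref{prop:wsat-behavior-s-1} is an integer attained eventually. We may also take $|Z|$ larger, say $\max(|V(H)|,m_0)$.

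Build $F$ as follows. Include every edge $e$ with $|e\setminus Z|\le s-2$; there are $O(n^{s-2})$ of these. For each $T\in\binom{[n]\setminus Z}{s-1}$, include $\{T\cup f\mid f\in F^Z\}$, where $F^Z$ is a fixed optimal weakly $\mc{H}$-saturated $(r-s+1)$-graph on $Z$ with $\eta(H)$ edges. This gives $|E(F)|\le \eta(H)\binom{n}{s-1}+O(n^{s-2})$.

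To saturate, for each $T$ we add the missing edges $T\cup f$ with $f\subseteq Z$ in the order that saturates $F^Z$. At each step some $L_H(U)$ appears, with new edge $f$, inside the link of $T$ on $Z$. We realize a copy of $H$ by mapping $U\mapsto T$ and mapping the vertices of $L_H(U)$ accordingly. The remaining vertices of $H$ go to unused vertices of $Z$; this needs $|Z|$ large enough, which is why we enlarged it.

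This embedding is the main obstacle. Every edge of the copy must already be present. Edges containing $T$ are present by the link saturation. Each other edge $W$ of the copy meets $T$ in at most $s-2$ vertices, and all its other vertices lie in $Z$. Hence $|W\setminus Z|\le s-2$, so $W$ was included in $F$ at the start.

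After this stage every edge with $|e\setminus Z|\le s-1$ is present, and Proposition~\ref{prop:short-tuza-final-building-step} finishes the saturation. The ordering subtlety is that the first stage only ever uses edges with $|W\setminus Z|\le s-2$, and those are all present from the start, so the stage-one additions can be made in any order across different $T$.
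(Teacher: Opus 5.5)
Your proposal is correct and follows essentially the same argument as the paper. For the lower bound you restrict a saturating order of $F$ to the edges containing a fixed $(s-1)$-set $T$, which shows $L_F(T)\in\Wsat(n-s+1,\mc{H})$, and then double count. For the upper bound you attach an optimal weakly $\mc{H}$-saturated link on $Z$ to every $(s-1)$-subset of $[n]\setminus Z$, include all edges with $|e\setminus Z|\le s-2$, and finish with Proposition~\ref{prop:short-tuza-final-building-step}.

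Your write-up is slightly more careful than the paper's in two places. You index the $(s-1)$-sets by subsets of $[n]$ (resp.\ $[n]\setminus Z$) rather than of $V(H)$. You also explicitly check that every edge $W$ of the embedded copy of $H$ not containing $T$ satisfies $|W\setminus Z|\le s-2$, so it is already present.
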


\begin{proof}
    Let $\mc{H} = \{ L_H(U) \mid U \in \shadow{s-1}(H) \}$.

    For the lower bound, take a hypergraph $F \in \Wsat(n,H)$. Fix an $U \in \binom{V(H)}{s-1}$. Then the hypergraph $L_F(U)$ belongs to $\Wsat(n - s + 1, \mc{H})$, since if we add edges in $L_F(U)$ in the order they appear in $F$, skipping those not containing $U$, then each time a copy of some $\tilde{H}$ is created in $F$, a copy of $L_{\tilde{H}}(U)$ is created in $L_F(U)$. Using the property $C_{\mc{H}}$ from Proposition~\ref{prop:wsat-behavior-s-1}, we get:
    \[
        \binom{r}{s-1} \cdot |E(F)| = \sum_{U \in \binom{V(H)}{s-1}} |L_F(U)| \ge \binom{n}{s-1} \cdot \wsat(n - s + 1, \mc{H}) \ge \binom{n}{s-1} \cdot \eta(H).
    \]

    Now we prove the upper bound. Let $m$ be the smallest integer such that $m \ge |V(H)|$ and $\wsat(m,\mc{H}) = \eta(H)$.

    Construct a weakly $H$-saturated hypergraph $F$ on vertex set $[n]$ for $n \ge m$. Fix a subset $Z \subseteq [n]$ with $|Z| = m$. Let $G$ be a weakly $\mc{H}$-saturated hypergraph on $Z$ with $|E(G)| = \eta(H)$.

    The edge set of $F$ is constructed as follows. Include all edges $e$ such that $|e \setminus Z| \le s - 2$. Then for each $U \in \binom{[n] \setminus Z}{s - 1}$, include in $F$ the edges $\{ U \cup e \mid e \in E(G) \}$. Then
    \[
        |E(F)| \le |E(G)| \cdot \binom{n}{s-1} + O(n^{s-2}) = \eta(H) \cdot \binom{n}{s-1} + O(n^{s-2}).
    \]

    It remains to show that $F$ is weakly $H$-saturated. We add edges in $F$ as follows. Fix $U \in \binom{V(H)}{s-1}$. Since we have all edges with $|e \setminus Z| \le s - 2$, we can add all edges $e$ such that $U \subseteq e$ and $e \setminus U \subseteq Z$ using the fact that $G \in \Wsat(m,\mc{H})$. After this, we obtain a hypergraph $\tilde{F}$ that contains all edges $e \in \binom{V(F)}{r}$ such that $|e \setminus Z| \le s - 1$. The remaining edges can be added using Proposition~\ref{prop:short-tuza-final-building-step}.
\end{proof}

By Proposition~\ref{prop:wsat-behavior-r-1}, for a hypergraph $H$ with $s(H) = r$, we have $\eta(H) = \delta^*(H) - 1$. Therefore, Proposition~\ref{prop:bounds-via-eta-H} generalizes Proposition~\ref{prop:trivial-upper-bound-hyper}.

For the quantity $\eta(H)$, we are also interested in determining asymptotically optimal lower and upper bounds.

\begin{question}\label{qstn:optimal-lower-bound-eta-sharpness}
    Let $r \ge 1$ and $s \ge 2$ be integers. Find functions $g_{\mathrm{lower},s}, g_{\mathrm{upper},s}: \Nz \to \R$ such that for every $\eta \ge 1$ and every $r$-uniform hypergraph $H$ with $s(H) = s$ and $\eta(H) = \eta$, the following holds:
    \[
        g_{\mathrm{lower},s}(\eta)\binom{n}{s-1} + o(n^{s-1}) \le \wsat(n,H) \le g_{\mathrm{upper},s}(\eta)\binom{n}{s-1} + o(n^{s-1}).
    \]
    In addition, for every integer $\eta \ge 1$ and every real $\epsilon > 0$, there should exist an $r$-uniform hypergraph $H$ with $s(H) = s$ and $\eta(H) = \eta$ such that
    \[
        \wsat(n,H) \le (g_{\mathrm{lower},s}(\eta) + \epsilon)\binom{n}{s-1} + o(n^{s-1}).
    \]
    Furthermore, for every integer $\eta \ge 1$, there should exist an $r$-uniform hypergraph $H$ with $s(H) = s$ and $\eta(H) = \eta$ such that
    \[
        \wsat(n,H) \ge g_{\mathrm{upper},s}(\eta)\binom{n}{s-1} + o(n^{s-1}).
    \]
\end{question}

\section{Asymptotics of polymatroidal method}\label{sec:polymatroid-asympt}


From Theorem~\ref{thrm:poly-lower-bound} it follows that $\wsat(n,H)\ge \rho_M(K_n^r)$ for every weakly $H$-saturated \mbox{$1$-polymatroid} on ground set $E(K_n^r)$. It is not hard to show that there exists a weakly $H$-saturated $1$-polymatroid that yields the best lower bound.

\begin{proposition}\label{prop:rhosat-exists}
    Let $r\ge 1$ and $n\ge 1$ be integers, and let $H$ be a non-empty $r$-uniform hypergraph. Then there exists a weakly $H$-saturated $1$-polymatroid $M=(E(K_n^r),\rho_M)$ such that for every weakly $H$-saturated $1$-polymatroid $N=(E(K_n^r),\rho_N)$
    \[\rho_M(K_n^r)\ge \rho_N(K_n^r).\]
\end{proposition}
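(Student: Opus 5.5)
A compactness argument settles this. The set of weakly $H$-saturated $1$-polymatroids is nonempty, closed, and bounded when viewed inside a finite-dimensional real space. The functional $\rho\mapsto\rho(K_n^r)$ is continuous on that space, so it attains its maximum.

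Let $E=E(K_n^r)$. Identify a function $\rho:\pwrs{E}\to\R$ with a vector in $\R^{2^{|E|}}$, and let $\mathcal{P}$ be the set of $\rho$ satisfying \eqref{eq:poly-boundness}, \eqref{eq:poly-monotone}, \eqref{eq:poly-submodular} and \eqref{eq:condition-poly} for every copy $\wt{H}\subseteq K_n^r$ of $H$ and every $e\in E(\wt{H})$. Every condition is a finite collection of non-strict linear inequalities or linear equalities in the coordinates of $\rho$. There are finitely many subsets, pairs of subsets, and copies of $H$ in $K_n^r$. Hence $\mathcal{P}$ is an intersection of finitely many closed half-spaces and hyperplanes, i.e.\ a closed set (indeed a polyhedron).

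Condition \eqref{eq:poly-boundness} gives $0\le\rho(A)\le|A|\le|E|$ for every $A$, so $\mathcal{P}$ is bounded and therefore compact. It is also nonempty, since the zero function $\rho\equiv 0$ satisfies all the axioms and trivially satisfies \eqref{eq:condition-poly}. The map $\rho\mapsto\rho(E)$ is a coordinate projection, hence continuous (in fact linear). It therefore attains its maximum on $\mathcal{P}$ at some $\rho_M$, and $M=(E,\rho_M)$ is the required $1$-polymatroid.

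There is essentially no obstacle. The only point needing care is that every defining condition is non-strict, so closedness holds, together with the finiteness of $K_n^r$. Since $\mathcal{P}$ is a polytope, one could alternatively note that the maximum is attained at a vertex, which would also give a rational optimal $\rho_M$.
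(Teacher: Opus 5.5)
Your proof is correct and is essentially the paper's argument: the paper notes that all the defining conditions are linear constraints on $\rho$, so maximizing $\rho(K_n^r)$ is a linear program whose feasible set is non-empty and bounded by \eqref{eq:poly-boundness}, and therefore it has an optimum. Your compactness version (a closed, bounded polyhedron with a continuous objective, and the zero function showing the feasible set is non-empty) is a more explicit statement of the same reasoning.
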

\begin{proof}
    The conditions \eqref{eq:poly-boundness}, \eqref{eq:poly-monotone}, \eqref{eq:poly-submodular}, and \eqref{eq:condition-poly} impose linear constraints on the values of the function $\rho \colon \pwrs{E(K_n^r)} \to \R$. Hence the problem of maximizing $\rho(K_n^r)$ over all weakly \mbox{$H$-saturated} $1$-polymatroids is a linear program. Since the set of feasible solutions is non-empty and, by \eqref{eq:poly-boundness}, bounded, an optimal solution exists.
\end{proof}

By Proposition~\ref{prop:rhosat-exists}, the following quantity is well defined.

\begin{definition}
    Let $r\ge 1$ and $n\ge 1$ be integers and let $H$ be a non-empty $r$-uniform hypergraph. Denote by $\rhosat(n,H)$ the best lower bound on $\wsat(n,H)$ that can be obtained from Theorem~\ref{thrm:poly-lower-bound}.
\end{definition}

The asymptotic order of $\rhosat(n,H)$ equals that of $\wsat(n,H)$.
\begin{proposition}
    Let $r\ge 1$ be integer and let $H$ be a non-empty $r$-uniform hypergraph. Then
    \[\wsat(n,H)=\Theta(\rho(n,H)).\]
\end{proposition}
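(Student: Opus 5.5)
Since $\rhosat(n,H)\le\wsat(n,H)$ by Theorem~\ref{thrm:poly-lower-bound}, and $\wsat(n,H)=\Theta(n^{s-1})$ with $s=s(H)$ by Theorem~\ref{thrm:tuza-theta-wsat}, it suffices to show that $\rhosat(n,H)=\Omega(n^{s-1})$. The plan is to exhibit, for every $n\ge|V(H)|$, an explicit weakly $H$-saturated count polymatroid whose rank is of order $n^{s-1}$.

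Degenerate cases come first. If $s(H)=1$, then $\wsat(n,H)=\Theta(1)$ by Proposition~\ref{prop:wsat-behavior-s-1}. Here I would show $\rhosat(n,H)\ge c>0$ for large $n$. One way is to note that $\wsat(n,H)\ge 0$ is trivially matched in order when $\wsat(n,H)$ is eventually $0$, and otherwise take a count polymatroid with $a_0=|E(H)|-1$ and $a_i=0$ for $i\ge1$. With $s(H)\ge 2$ we have $|E(H)|\ge 2$, so if $|E(H)|=1$ both sides vanish and the statement is read as trivial or with $\Theta(1)$ conventions. For this polymatroid $L_{\mb a}\equiv |E(H)|-1$. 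By Proposition~\ref{prop:count-poly-less-L}, $\rho(\wt H)\le |E(H)|-1$. By Proposition~\ref{prop:count-poly-indep-criterion}, $\wt H\setminus\{e\}$ has full rank $|E(H)|-1$, because every subgraph $G$ of it has $|E(G)|\le |E(H)|-1=L_{\mb a}(G)$. So condition~\eqref{eq:condition-poly} holds, and Theorem~\ref{thrm:rank-count-poly} gives rank $\min\{|E(H)|-1,\binom nr\}$, a positive constant. The case $r=1$ is covered the same way.

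For $s\ge 2$ (hence $r\ge2$), I simply invoke Theorem~\ref{thrm:gamma-lower-bound-hyper}. Its proof already builds a weakly $H$-saturated count polymatroid, so that bound is in fact a lower bound on $\rhosat(n,H)$:
\[
\rhosat(n,H)\ge \gamma_{s,H}\Bigl(\tbinom{n}{s-1}-|\shadow{s-1}(H)|\Bigr)+|E(H)|-1 .
\]
What remains is to check that $\gamma_{s,H}>0$; this is the main point. I would use Proposition~\ref{prop:alt-form-gamma}. Take any nonempty $\mc S\seq\shadow{s-1}(H)$ admissible in the minimum, and let $t$ be the number of edges meeting $\mc S$ in an $(s-1)$-subset.

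If $t\ge 2$, the ratio is at least $1/|\shadow{s-1}(H)|>0$. So it suffices to rule out $t=1$, which is the key step. Suppose $t=1$, and pick $W\in\mc S$. Then $W$ lies in exactly one edge. By Proposition~\ref{prop:sparseness-and-delta}, this means $\delta_{s-1}(H)=1$, and hence $s(H)\le s-1$, a contradiction. Therefore every admissible ratio is at least $1/|\shadow{s-1}(H)|$, and $\gamma_{s,H}\ge 1/|\shadow{s-1}(H)|$.

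This yields $\rhosat(n,H)\ge c_H\binom{n}{s-1}-O(1)$, which is $\Omega(n^{s-1})$. Combined with $\rhosat\le\wsat=O(n^{s-1})$, this gives the claimed $\Theta$.
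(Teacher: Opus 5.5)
Your proof is correct and follows the paper's route: $s(H)=0$ is trivial, $s(H)=1$ is handled by a constant-rank uniform polymatroid (the paper uses rank $1$, you use the count polymatroid of rank $|E(H)|-1$), and $s(H)\ge 2$ uses the count polymatroid from the proof of Theorem~\ref{thrm:gamma-lower-bound-hyper}. Your check via Proposition~\ref{prop:alt-form-gamma} that $\gamma_{s,H}\ge 1/|\shadow{s-1}(H)|>0$ supplies a step the paper leaves implicit; the $s(H)=1$ paragraph needs tidying, since ``$s(H)\ge 2$'' there should read $s(H)\ge 1$, and the detour about $\wsat(n,H)$ being eventually $0$ is moot because $|E(H)|\ge 2$ already forces $\wsat(n,H)\ge 1$.
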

\begin{proof}
    Let $s=s(H)$.

    If $s=0$, then for all $n\ge 0$, $\wsat(n,H)=0=\rhosat(n,H)$, since we can take a $1$-polymatroid $M$ with rank function
    \[\forall F\seq K_n^r\quad \rho_M(F)=0.\]

    If $s=1$, then $|E(H)|\ge 2$, and by Theorem~\ref{thrm:tuza-theta-wsat} $\wsat(n,H)=\Theta(1)$. Also, $\rhosat(n,H)=\Theta(1)$, since we may take the $1$-polymatroid $M$ whose rank function is given by
    \[\forall F\seq K_n^r\quad \rho_M(F)=\begin{cases}1,&\quad \text{if }|E(F)|\neq 0;\\ 0,&\quad\text{otherwise}.\end{cases}\]
    This $1$-polymatroid is weakly $H$-saturated since $|E(H)|\ge 2$.

    If $s\ge 2$, then by Theorem~\ref{thrm:tuza-theta-wsat} $\wsat(n,H)=\Theta(n^{s-1})$. Also, $\rhosat(n,H)=\Theta(n^{s-1})$ by the proof of Theorem~\ref{thrm:gamma-lower-bound-hyper}.
\end{proof}

Moreover, by arguing analogously to the proof in~\cite{2025TerekhovTuza}, one can show that $\rhosat(n,H)$, like $\wsat(n,H)$, have an asymptotic coefficient.
\begin{theorem}\label{thrm:limit-of-rhosat-exists}
    Let $r\ge 1$ and $s\ge 1$ be integers. Let $H$ be an $r$-uniform hypergraph with $s(H)=s$. Then the following limit exists:
    \[\lim_{n\to+\infty}\frac{\rhosat(n,H)}{n^{s-1}}.\]
\end{theorem}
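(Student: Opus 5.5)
We prove existence of the limit by a superadditivity-type argument for $\rhosat$, modelled on the proof of Theorem~\ref{thrm:tuza-limit-wsat} in~\cite{2023Shapira, 2025TerekhovTuza}. The cases $s=0$ and $s=1$ are immediate: for $s=1$ one checks, as in Proposition~\ref{prop:wsat-behavior-s-1}, that $\rhosat(n,H)$ is monotone in $n$ and bounded by $\wsat(n,H)=O(1)$. So the substance is $s\ge 2$. There, by the preceding proposition, $\rhosat(n,H)=\Theta(n^{s-1})$, so it suffices to show $\liminf \rhosat(n,H)/\binom{n}{s-1}\ge \limsup \rhosat(n,H)/\binom{n}{s-1}$.

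\textbf{Step 1 (restriction).} Fix $m$ and an optimal weakly $H$-saturated $1$-polymatroid $M_m$ on $E(K_m^r)$ given by Proposition~\ref{prop:rhosat-exists}. For any injection $\phi$ of $[m]$ into $[n]$, the function $\rho_\phi(F)=\rho_{M_m}(\phi^{-1}(F\cap \phi(K_m^r)))$ is a weakly $H$-saturated $1$-polymatroid on $E(K_n^r)$. Indeed, it is a pullback of a polymatroid under a restriction map. Condition~\eqref{eq:condition-poly} holds because intersecting a copy of $H$ with $\phi(K_m^r)$ and then removing an edge still has the same closure. To see this, use submodularity: $\rho(\wt H\cap X\setminus e)\ge \rho(\wt H\setminus e)+\rho(X)-\rho((\wt H\setminus e)\cup X)$, combined with the equality $\rho(X\cup\wt H)=\rho(X\cup \wt H\setminus e)$.

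\textbf{Step 2 (averaging).} The key closure property is that weakly $H$-saturated $1$-polymatroids form a convex set; the constraints are linear, as noted in the proof of Proposition~\ref{prop:rhosat-exists}. Hence the average $\bar\rho$ of $\rho_\phi$ over a family of injections is again admissible. A plain average over all copies only gives $\rhosat(m,H)$ divided by a large factor, so instead we build a sum. Use Theorem~\ref{thrm:rodl} as in Proposition~\ref{prop:upper-bound-via-G-hyper}, but in the packing direction: choose a family $\mc F$ of $m$-subsets of $[n]$, pairwise intersecting in fewer than $s-1$ vertices, with $|\mc F|\ge(1-\epsilon)\binom{n}{s-1}/\binom{m}{s-1}$. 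Then consider $\sigma(F)=\sum_{X\in\mc F}\rho_{\phi_X}(F)$, scaled by $1/c$ so that $\sigma(A)\le|A|$ holds. Since distinct $X$ share no edge when $r\ge s-1$, the sets $E(K_X^r)$ are disjoint, so $\sigma(A)\le\sum_X|A\cap K_X^r|\le |A|$ and no scaling is needed. The sum of polymatroids is monotone and submodular, and condition~\eqref{eq:condition-poly} holds termwise. This gives $\rhosat(n,H)\ge(1-\epsilon)\binom{n}{s-1}\rhosat(m,H)/\binom{m}{s-1}$.

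\textbf{Step 3 (conclusion).} Step 2 yields $\liminf_n \rhosat(n,H)/\binom{n}{s-1}\ge \rhosat(m,H)/\binom{m}{s-1}-\epsilon$ for every $m$. Taking $\limsup$ over $m$ gives existence of the limit, which is finite by $\rhosat\le \wsat=O(n^{s-1})$.

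The main obstacle is the termwise verification of~\eqref{eq:condition-poly} in Step 1. A copy $\wt H$ of $H$ may meet $K_X^r$ in only part of its edges, and the restricted polymatroid must still satisfy the condition for that partial copy. The submodularity argument must be checked carefully for this case. A cleaner alternative is to use the closure characterization: in each summand, $e$ lies in the closure of $\wt H\setminus e$, and closure is preserved under restriction, since restriction commutes with $\rho(A\cup e)=\rho(A)$ for $A\subseteq$ ground set. If this fails, I would follow~\cite{2025TerekhovTuza} more literally and argue via the dual linear program.
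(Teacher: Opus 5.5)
\textbf{There is a genuine gap in Step 1}, and everything after it depends on that step.

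The pullback $\rho_\phi$ is a $1$-polymatroid on $E(K_n^r)$, but it is not weakly $H$-saturated. Condition~\eqref{eq:condition-poly} must hold for every copy $\wt{H}\seq K_n^r$, including copies that meet $K_X^r$, $X=\phi([m])$, in only some of their edges.

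\begin{itemize}
\item Pick an edge $e\seq X$ with $\rho_\phi(\{e\})>0$. One exists whenever $\rhosat(m,H)>0$, by subadditivity.
\item Pick a copy $\wt{H}\ni e$ whose remaining vertices lie outside $X$. Every other edge of $\wt{H}$ contains a vertex outside $e$, hence outside $X$.
\item So $\wt{H}\cap K_X^r=\{e\}$, and $\rho_\phi(\wt{H}\setminus\{e\})=0<\rho_\phi(\wt{H})$.
\end{itemize}

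Your justifications do not cover this case. The submodularity line presupposes a weakly $H$-saturated polymatroid already defined on all of $E(K_n^r)$, which is exactly what you are trying to build. Even then, it only bounds $\rho(\wt{H}\cap X\setminus\{e\})$ from below, with no matching upper bound on $\rho(\wt{H}\cap X)$. The closure remark fails for a similar reason: closure commutes with restriction only for sets inside the restricted ground set, and $\wt{H}\setminus\{e\}\not\seq E(K_X^r)$. Summing over $\mc{F}$ in Step 2 cannot repair this. Each summand is monotone, so equality of the sums forces equality in every summand.

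\textbf{The inequality you derive is in fact false}, so no repair of this superadditivity scheme can work.

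\begin{itemize}
\item Take $H=C_5$ and $m=4$. There is no copy of $H$ in $K_4^2$, so the free matroid is vacuously weakly $H$-saturated and $\rhosat(4,C_5)=6$. Step 3 would give $\liminf_n\rhosat(n,C_5)/n\ge 3/2-\epsilon$. But Proposition~\ref{prop:trivial-upper-bound-graph} gives $\rhosat(n,C_5)\le\wsat(n,C_5)\le n+O(1)$.
\item Requiring $m\ge|V(H)|$ does not help. For $H=K_3\sqcup K_4$ and $m=7$, the uniform matroid of rank $|E(H)|-1=8$ is weakly $H$-saturated, so $\rhosat(7,H)/7\ge 8/7>1$. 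Since $\delta(H)=2$, the same proposition gives $\limsup_n\rhosat(n,H)/n\le 1$.
\end{itemize}

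\textbf{The safe direction is restriction from a larger ground set to a smaller one}, and this is what the paper uses.

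\begin{itemize}
\item Take an optimal $M$ on $E(K_n^r)$. Its restriction to any $m$-clique is weakly $H$-saturated, because copies inside the clique are copies in $K_n^r$. So $\rho_M$ of that clique is at most $\rhosat(m,H)$.
\item Fix a core $Z$ of $|V(H)|$ vertices and cover $[n]\setminus Z$ by a R\"odl covering (Theorem~\ref{thrm:rodl}) with $(m-|V(H)|)$-sets $W$.
\item The union $F$ of the cliques on $Z\cup W$ is weakly $H$-saturated by Proposition~\ref{prop:short-tuza-final-building-step}. Hence, by subadditivity, $\rhosat(n,H)=\rho_M(F)\le\sum_W\rho_M\bigl(\binom{Z\cup W}{r}\bigr)$.
\item Choosing $m$ with $\rhosat(m,H)/\binom{m-|V(H)|}{s-1}$ close to the $\liminf$ then gives $\limsup\le\liminf$.
\end{itemize}

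This is an upper bound by subadditivity over a covering, the reverse of the superadditive packing bound you attempted.
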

Due to the similarity of the proof, it is give in Appendix~\ref{appendix:rhosat}.

This theorem suggests that the following asymptotic equality may also hold.
\begin{conjecture}
    Let $r\ge 1$ and $s\ge 1$ be integers. Let $H$ be an $r$-uniform hypergraph with $s(H)=s$. Then
    \[\wsat(n,H)=(1+o(1))\rhosat(n,H).\]
\end{conjecture}

It is also natural to ask whether the polymatroidal method always gives the exact value. We do not expect this to be the case, but we have not found a counterexample.
\begin{question}
    Let $r\ge 1$ and $n\ge 1$ be integers and let $H$ be a non-empty $r$-uniform hypergraph. Is it always true that
    \[\wsat(n,H)=\rhosat(n,H)?\]
\end{question}



\section{Declaration of generative AI and AI-assisted technologies in the writing process}

During the preparation of this work the author used ChatGPT to improve the readability and language of the manuscript. After using this service, the author reviewed and edited the content as needed and takes full responsibility for the content of the published article.

    \printbibliography

\appendix
\section{Proof of Theorem~\ref{thrm:limit-of-rhosat-exists}}\label{appendix:rhosat}

Let $v = \lbr{V(H)}$. The statement of the theorem is equivalent to the existence of the limit
\[\lim_{n\to +\infty} \frac{\rhosat(n,H)}{\binom{n - v}{s-1}}.\]

Since $\rhosat(n,H)\le \wsat(n,H)$, from Theorem~\ref{thrm:tuza-theta-wsat} it follows that
\[C = \liminf_{n\to +\infty} \frac{\rhosat(n,H)}{\binom{n - v}{s-1}}\]
exists.

Fix $\epsilon > 0$. By the definition of $C$, there exists $m \geq v + s - 1$ such that
\[\rhosat(m,H)\le (C+\epsilon)\cdot \binom{m-v}{s-1}.\]

By Theorem~\ref{thrm:rodl}, there exists $N_0 \ge m - v$ such that for any set $X$ with $\lbr{X}\ge N_0$, there exists a family $\mc{F}_X \subseteq \binom{X}{m - v}$ such that every subset of $X$ of size $s - 1$ is contained in at least one element of $\mc{F}_X$, where
\[\lbr{\mc{F}_X} \le (1+\epsilon)\cdot\frac{\binom{\lbr{X}}{s-1}}{\binom{m-v}{s-1}}.\]
Furthermore, any subset of $X$ of size less than $s - 1$ is contained in some element of $\mc{F}_X$, as such a subset can always be extended to the size $s - 1$.

Fix $n \ge N_0 + v$. Let $M=(E(K^r_n),\rho_M)$ be a weakly $H$-saturated $1$-polymatroid with $\rhosat(n,H)=\rho_M(K^r_n)$.

For every complete hypergraph $G\seq K^r_n$ on $m$ vertices. The induced $1$-polymatroid $M|_G= (E(G),\rho_M|_{E(G)})$ is also weakly $H$-saturated, and, by definition of $\rhosat(m,H)$,
\begin{equation}\label{eq:induced-poly}
    \rho_M(G)\le \rhosat(m,H).
\end{equation}

We construct an $r$-uniform hypergraph $F$ on vertex set $[n]$ in the following way. Let $Z = [v]$ and $X = [n] \setminus Z$. For each element $W \in \mc{F}_X$, add to the hypergraph $F$ a clique on the vertex set $Z \cup W$. From \eqref{eq:induced-poly} we get that:
\begin{align*}
\rho_M(F)&\le \sum_{W\in \mc{F}_X}\rho_M\br{\binom{Z\cup W}{r}}\le |\mc{F}_X|\cdot \rhosat(m, H) \\
&\le(1+\epsilon)\cdot \frac{\binom{\lbr{X}}{s-1}}{\binom{m-v}{s-1}} \cdot (C+\epsilon)\cdot \binom{m-v}{s-1}=(1+\epsilon)(C+\epsilon)\cdot\binom{n-v}{s-1}.
\end{align*}
By the definition of $\mc{F}_X$, $F$ contains all edges $e \in \binom{[n]}{r}$ satisfying $\lbr{X\cap e} \le s-1$. By Proposition~\ref{prop:short-tuza-final-building-step}, $F$ is weakly $H$-saturated and so
\[\rhosat(n,H) = \rho_M(K^r_n)=\rho_M(F) \le (1+\epsilon)(C+\epsilon)\cdot\binom{n-v}{s-1}.\]
Given that $\epsilon>0$ can be chosen arbitrarily small, the theorem is proved.

\end{document}